\numberwithin{equation}{section}
\theoremstyle{plain}
\newtheorem{theorem}{Theorem}
\newtheorem{lemma}{Lemma}[section]
\newtheorem{propos}[lemma]{Proposition} 
\newtheorem{corollary}[lemma]{Corollary} 
\newtheorem{onecorollary}[lemma]{Corollary} 
\theoremstyle{definition}
\newtheorem{definition}{Definition}[section]
\newtheorem{proofmain}{Proof of Theorem 1}
\newtheorem{remark}[lemma]{Remark} 
\newtheorem{opquest}{Question}
\renewcommand{\Re}{\operatorname{Re}}
\newcommand{\const}{\mathrm{const}}
\newcommand{\RR}{\mathbb{R}}
\newcommand{\Rp}{\RR_+}
\newcommand{\CC}{\mathbb{C}}
\newcommand{\ZZ}{\mathbb{Z}}
\newcommand{\ba}[1]{\begin{array}{#1}}
\newcommand{\ea}{\end{array}}
\newcommand{\beq}[1]{\begin{equation}\label{#1}}
\newcommand{\eeq}{\end{equation}}
\newcommand{\supp}{\mathrm{supp}\,}
\newcommand{\rbr}[1]{\left(#1\right)}
\newcommand{\fbr}[1]{\left\{#1\right\}}
\newcommand{\abr}[1]{\left|#1\right|}
\newcommand{\weakto}{\rightharpoonup}
\newcommand{\wlim}{\mathop{\mathrm{w\mbox{-}lim}}}
\newcommand{\diam}{D}
\newcommand{\dsupp}{\supp}
\newcommand{\indf}[1]{I_{#1}}
\newcommand{\keps}{k_{\rho}}
\newcommand{\Keps}{K_{\rho}}
\newcommand{\pow}[2]{{#1}^{\langle #2\rangle}}
\newcommand{\shift}{\hat}
\newcommand{\Seq}[1]{\mathrm{#1}}
\newcommand{\Lt}{\mathcal{L}} 
\newcommand{\Ft}{\mathcal{F}} 
\newcommand{\numN}{N^\circ}
\newcommand{\ga}{\gamma}
\newcommand{\eps}{\varepsilon}
\newcommand{\norm}[1]{\left\|#1\right\|}
\def\rem#1{}
\renewcommand{\text}[1]{\mbox{\rm #1}}
\newcounter{refnote}
\newcommand{\eref}[1]{${}^{(\mbox{\it\ref{#1}})}$}
\newcommand{\ereftxt}[1]{({\it\ref{#1}})}
\newcommand{\enote}[1]{\refstepcounter{refnote}\label{#1}%
           ({\it\ref{#1}})}
\begin{document}

\author{{Gleb Kalachev}%
\footnote{
{Moscow State University, Russia;}
{email: gleb.kalachev@yandex.ru}}
\ and
{Sergey Sadov}%
\footnote{{Moscow, Russia};
{email: serge.sadov@gmail.com}}
}

\title{On maximizers of convolution operators in $L_p$ spaces} 





\maketitle

\begin{abstract}
We consider a convolution operator in
$\RR^d$ with kernel in $L_q$ acting from
$L_p$ to $L_s$, where $1/p+1/q=1+1/s$.
The main theorem states that if $1<q,p,s<\infty$, then there exists an $L_p$ function of unit norm  
on which the $s$-norm of the convolution is attained.
A number of questions, 
related to the statement and proof of the main theorem, are discussed.
Also the problem of computing best constants in the Hausdorff-Young inequality for the Laplace transform, which
prompted this research, is considered.

\bigskip\noindent
{\bf Keywords:}
convolution, 
Young inequality, existence of extremizer,  
concentration compactness,
tight sequence,
Laplace transform, best constants.

\bigskip\noindent
{\bf MSC} 44A35, 49J99, 44A10, 41A44
\end{abstract}

\markright{Maximizers of convolution operators}

\section{Introduction}
\label{sec:intro}

Let $L_p(\RR^d)$ denote the Lebesgue space of measurable complex-valued functions with norm $\|f\|_p=(\int |f|^p)^{1/p}$, where $1\leq p<\infty$,
or with norm $\|f\|_\infty=\sup\{a>0\,|\, |f(x)|\leq a \;\text{a.e.}\}$. Throughout, $p'=(1-1/p)^{-1}$ denotes the conjugate exponent.
We consider a convolution operator $K_k:\, f\mapsto k*f$ with kernel  $k\in L_{q}(\RR^d)$, 
$$
K_k f(x)=\int k(x-y) f(y)\,dy.
$$
As long as there is no ambiguity, we use shorthand notation:
$L_p$ instead of $L_p(\RR^d)$, $\int f$ instead of $\int f(x)\,dx$, and $K$ instead of $K_k$. (If $k_\lambda(\cdot)$ is a family of kernels depending on a parameter, we write $K_\lambda$ instead of $K_{k_\lambda}$.) In formulations and proofs of statements it is assumed that the dimension $d$ and the kernel $k$ are fixed.

Let $1\leq p,q,r\leq\infty$. If $k\in L_q$ and the relation
\beq{pqr}
\frac{1}{p}+\frac{1}{q}+\frac{1}{r}=2
\eeq
holds, 
then the operator $K$ acts boundedly from $L_p$ to $L_{r'}$ 
and its norm (to be called the $(p,r)$-norm%
%
\footnote{Emphasizing that $\|K\|_{p,r}$ is the extremum of the symmetric bilinear form
$\|K\|_{p,r}=\|K\|_{r,p}=\sup\big|\int k(x+y)f(x)g(y)\,dx\,dy\big|; \,\norm{f}_p=\norm{g}_r=1$.
}%
) has an upper bound given by Young's inequality 
$\|K\|_{p,r}\leq \|k\|_q$.
A function $f\in L_p$ is called a \emph{maximizer}\ of the convolution operator $K$ with respect to the pair of exponents $(p,r)$ if $\|f\|_p=1$ and
$\|k*f\|_{r'}=\|K\|_{p,r}$.

The main result of the paper is the theorem on existence of a maximizer.

\begin{theorem}
\label{thm:minimizer_exists}
Let $1<p,q,r<\infty$ and the relation \eqref{pqr} holds. Then for any kernel $k\in L_q$  there exists a maximizer of the operator $K$ 
 with respect to the pair of exponents $(p,r)$.
\end{theorem}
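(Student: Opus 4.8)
The plan is to apply the concentration--compactness method of Lions to a maximizing sequence. We may assume $k\neq 0$, so that $M:=\norm{K}_{p,r}>0$. Choose $f_n$ with $\norm{f_n}_p=1$ and $\norm{k*f_n}_{r'}\to M$, and consider the probability measures $\mu_n=|f_n|^p\,dx$. Up to a subsequence, exactly one of the three Lions alternatives holds for $\mu_n$: compactness (after translation), vanishing, or dichotomy. The strategy is to exclude vanishing and dichotomy, and then to extract a maximizer from the compact case. Throughout, the decisive structural fact is that the exponent relation \eqref{pqr} with $q>1$ forces
$$\frac1p+\frac1r=2-\frac1q>1,\qquad\text{hence}\qquad r'>p .$$

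First I would rule out \emph{vanishing}. Splitting $k=k_1+k_2$ with $k_1\in C_c$ and $\norm{k_2}_q$ arbitrarily small, Young's inequality controls $\norm{k_2*f_n}_{r'}\leq\norm{k_2}_q$, while for the compactly supported piece a H\"older estimate over $\supp k_1(x-\cdot)$ gives $\norm{k_1*f_n}_\infty\to 0$ under the vanishing hypothesis; interpolating between the uniform bound $\norm{k_1*f_n}_p\leq\norm{k_1}_1$ and $\norm{k_1*f_n}_\infty\to 0$ (legitimate since $p<r'<\infty$) yields $\norm{k_1*f_n}_{r'}\to 0$. Letting $\norm{k_2}_q\to 0$ shows $\norm{k*f_n}_{r'}\to 0$, contradicting $M>0$. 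Next I would rule out \emph{dichotomy}. If the mass splits into pieces $f_n^{(1)},f_n^{(2)}$ of weights $\theta$ and $1-\theta$ whose supports drift infinitely apart, then approximating $k$ by a compactly supported kernel makes the images $k*f_n^{(1)}$ and $k*f_n^{(2)}$ asymptotically disjointly supported, so
$$\limsup_n\norm{k*f_n}_{r'}\leq M\rbr{\theta^{r'/p}+(1-\theta)^{r'/p}}^{1/r'}<M,$$
the strict inequality being exactly the superadditivity of $t\mapsto t^{r'/p}$ for $r'/p>1$. This again contradicts that $f_n$ is maximizing.

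Only the \emph{compact} case survives: there are translations $y_n$ such that $\tilde f_n(\cdot)=f_n(\cdot+y_n)$ has $\{|\tilde f_n|^p\}$ tight, and since convolution commutes with translation, $\norm{k*\tilde f_n}_{r'}\to M$. By reflexivity of $L_p$ ($1<p<\infty$) I may pass to a weak limit $\tilde f_n\weakto f$. The key remaining step is to upgrade this to $k*\tilde f_n\to k*f$ strongly in $L_{r'}$: writing again $k=k_1+k_2$, the error is uniformly small in $L_{r'}$ by Young, while for $k_1\in C_c$ weak convergence gives pointwise convergence $k_1*\tilde f_n(x)\to k_1*f(x)$, and tightness of $\{|\tilde f_n|^p\}$ confines the mass of $|k_1*\tilde f_n|^{r'}$ to a fixed ball; dominated convergence on that ball together with uniformly small tails (Vitali's theorem) then gives $k_1*\tilde f_n\to k_1*f$ in $L_{r'}$. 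Consequently $\norm{k*f}_{r'}=M$. Finally, weak lower semicontinuity gives $\norm{f}_p\leq 1$, while $\norm{k*f}_{r'}\leq M\norm{f}_p$ forces $\norm{f}_p\geq 1$; hence $\norm{f}_p=1$ and $f$ is the desired maximizer.

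The main obstacle, I expect, is making the two ``far-apart supports'' reductions rigorous --- both the dichotomy estimate and the strong-convergence step rely on trading the genuine $L_q$ kernel for a compactly supported approximation and controlling the resulting errors uniformly in $n$. The conceptual heart, however, is the strict subadditivity in the dichotomy step, which holds precisely because $q>1$; this is what forbids splitting of mass and explains why the hypothesis $q>1$ (equivalently $r'>p$) cannot be dropped.
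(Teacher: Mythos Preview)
Your sketch is correct: the vanishing case is killed exactly as you say (the compactly supported approximation $k_1\in C_c$ gives $\|k_1*f_n\|_\infty\to 0$, and interpolation against the uniform $L_p$ bound works because $p<r'<\infty$); the dichotomy case is excluded by the strict subadditivity of $t\mapsto t^{r'/p}$, which is precisely where $q>1$ enters; and in the compact case the argument for strong convergence of $k*\tilde f_n$ in $L_{r'}$ via kernel truncation, pointwise convergence, and tight tails goes through, after which $\|f\|_p=1$ follows from $M=\|k*f\|_{r'}\le M\|f\|_p$ and weak lower semicontinuity. The details you flag as obstacles (uniform error control under the approximation $k\approx k_1$, and a careful diagonal extraction in the dichotomy decomposition so that the supports separate faster than the radii grow) are genuine bookkeeping but not conceptual gaps.

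This is, however, a genuinely different route from the paper's. The paper does \emph{not} invoke Lions' trichotomy at all: instead of ruling out vanishing and dichotomy, it proves directly (Lemmas~\ref{lemma:controlled_support}--\ref{lemma:concentration_property}) a quantitative bound on the ``$\delta$-diameter'' of any $\eps$-maximizer in terms of the $\rho$-diameter of $k$, which immediately gives relative tightness of every maximizing sequence. The passage from weak to strong convergence is handled not by analysing $k*f_n$ as you do, but by applying a nonlinear ``improving operator'' $B$ (built from the Euler--Lagrange structure) which sends weakly convergent sequences to sequences that converge in $L_p$ on sets of finite measure; combined with tightness this yields strong $L_p$ convergence of the maximizing sequence itself. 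Your approach is the standard Lions machinery and is conceptually shorter; the paper's is self-contained, gives quantitative diameter estimates, and yields the stronger conclusion that every maximizing sequence is shift-compact in $L_p$ (Proposition~\ref{prop:any-subseq-conv}) --- something your argument does not produce directly, since you only obtain strong convergence of $k*\tilde f_n$, not of $\tilde f_n$. The authors themselves remark (comment~\ereftxt{com:Lions-intro-problem}) that a Lions-style proof ``should be possible'' but would be ``a separate and not quite trivial project''; you have outlined that project.
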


In a narrow sense, the only predecessor of this result that we are aware of is the paper by M.\,Pearson \cite{Pearson_1999}.
In it, the existence of a maximizer is proved under the following assumptions:
the function $k$ is radially symmetric, nonnegative, decreasing away from the origin; besides, an ``extra room of integrability'',
$k\in L_{q+\eps}\cap L_{q-\eps}$, is required. 

In a wider sense, our paper's context is related, on the one hand
(from the motivation side),
to the business of sharp constant in analytical inequalities, and on the other hand, to techniques of proving the existence of extremizers in variational problems with non-compact groups of symmetries.

The starting point of this work was computation of the norms of
the Laplace transorm as an operator from $L_p$ to 
 $L_{p'}$ ($1\leq p\leq 2$) on $\RR_+$ (that is, of sharp constants in G.H.\,Hardy's \cite{Hardy1933} inequality),
reported here in Section~\ref{sec:Laplace-norm}.
An equivalent problem is to compute the corresponding norms of the convolution operators on $\RR^1$ with kernels $h_p(x)=\exp(x/p'-e^{x})$.
Hardy's original estimate can be improved by making use of W.~Beckner's sharp form \cite{Beckner1975b} of Young's inequality%
\eref{com:Beckner}.%
\footnote{The labels \eref{com:Beckner}, \eref{com:Christ} etc.\ refer to the comments \ereftxt{com:Beckner}, \ereftxt{com:Christ}  etc.\ in Section\,\ref{sec:comments}.}
\beq{beckC}
 \|K\|_{p,r}\leq (A_p A_q A_{r})^d\,\|k\|_q ,
\quad\text{where}
\quad
A_m=\left(\frac{m^{1/m}}{{m'}^{1/m'}}\right)^{1/2},
\eeq
The so improved estimate --- see \eqref{EricLp} --- is found in the 2005's M.\,Sc.\ Thesis of E.~Setterqvist \cite[Theorem~2.2]{Setterqvist2005}.

The equality in Beckner's inequality \eqref{beckC} takes place only in the case of a Gaussian kernel $k$. 
Furhter analytical enhancement of the estimate \eqref{EricLp}
should in principle be possible by using recent subtle results   
of M.\,Christ \cite{Christ_2011,Christ_2014}\eref{com:Christ}.

Our numerical results on the norms $\|K_{h_p}\|_{p,p}$ (lacking full justification) allow one to make judgement about comparative strength of the available analytical estimates (see Section\,\ref{sec:Laplace-norm}). For $1<p<2$, the 
existence of maximizers was first observed experimentally. (If $p=2$, there is no maximizer as one can easily see.)
Since the kernel $h_p$ is not symmetric, Pearson's theorem is not applicable. The question about
possible weaker conditions sufficient for the existence of a maximizer naturally presented itself. 
Somewhat surprisingly, it turned out that no artificial conditions are needed at all.

A difficulty in proving the existence of a maximizer of a convolution operator, as well as in similar situations, owes to the problem's invariance
with respect to a non-compact group of transformations
 (the additive group $\RR^d$ of translations, in our case). 
A natural attempt is to begin with some normalized maximizing sequence  
 $(f_n)$, $\|f_n\|_p=1$,  $\|Kf_n\|_{r'}\to\|K\|$, and, referring to the Banach-Alaoglu theorem, to find a weakly converging subsequence $(f_{n(m)})$. 
There is no chance to prove strong convergence of the subsequence $(f_{n(m)})$ since it may ``run away'' to infinity, thus weakly converging to zero.  

However one can take shifted functions $\tilde f_n=f_n(\cdot-a_n)$ hoping to chose the shifts $a_n$ so as to 
obtain a relatively compact sequence $(\tilde f_{n})$. (In terms of inequalities it amounts to establishing suitable 
uniform estimates). If this idea works out, a weakly converging subsequence of the shifted sequence will converge strongly and its limit
will be a maximizer. 

Person's proof exploits the fact that
in the case of a radially symmetric kernel the functions $\tilde f_n$ can also be made radially symmetric. This is due to 
M.\,Riesz's inequality for nondecreasing rearrangements \cite[Theorem~3.7]{Lieb-Loss_1998}. We do not know of any substitute (analog, generalization)  
for Riesz's inequality in the case of a non-symmetric kernel,
which would allow a generalization of Pearson's argument. Availability of full analytical control of the kernel in all directions (like in the case of $h_p$) does not help. Our approach is completely different. 
Absent a natural reference point, we develop an intrinsic way to describe localization of near-maximizers.
 

The scheme and some elements of our proof
exhibit clear similarities with the concentration compactness method of P.\,L.~Lions \cite{Lions_1984a}. Moreover, the class of variational problems described in the introductory section of Lions' paper \cite{Lions_1984a} includes the problem of finding the $(p,r)$-norm of a convolution operator as one of the simplest representatives of the class.%
\eref{com:Lions-intro-problem}.


For this reason one may wonder at 
the absence (to the best of our knowledge) of 
Theorem~\ref{thm:minimizer_exists} in the existing literature.\eref{com:Tao-toy-problem}
Note though that the proof presented here does not depend on Lions' work, whether directly or indirectly. 
  
Let us outline the paper's structure. The proof of Theorem~\ref{thm:minimizer_exists} is given in Sections\,\ref{sec:proof-general}--\ref{sec:lemmas}.
In Section\,\ref{sec:proof-general} we introduce relevant terminology and describe the proof ``in the large''.
Properties required at the steps of the proof
are mentioned and references to the places where they are
treated in detail are given. 
Detailed formulations and proofs of the required properties as well as auxiliary intermediate results are contained in Sections 
~\ref{sec:non-spreading}~and~\ref{sec:lemmas}. 
The key Lemmas~\ref{lemma:controlled_support}--\ref{lemma:concentration_property} of Section~\ref{sec:non-spreading} provide
uniform control on the size of 
``near-supports'' of member functions of a maximizing sequence,
to exclude a possibility of diffusion. (It helps to always ask in the course of the proof, where it fails when $q=1$, $p=r=2$, in which case there is no maximizer if $k(x)>0$). 
The lemmas of Section~\ref{sec:lemmas} deal with compactness and shifts. 

In Section\,\ref{sec:discussion},
a number of diverse results related to 
Theorem~\ref{thm:minimizer_exists} and its proof
are treated. In particular, we discuss
the cases of exponents
$1$ and $\infty$, the equation satisfied by a maximizer, the lower bounds for convolution operators, etc.
A survey of the results is given in
Subsection\,\ref{ssec:discussion-survey}. 
 
Section\,\ref{sec:Laplace-norm} is devoted to
computation of the norms of the Laplace transform
from $L_p(\RR_+)$ to $L_{p'}(\RR_+)$;
the equivalent problem being, as mentioned before, the calculation of the norms of  convolutions with kernels $h_p(x)$ as operators from $L_p(\RR)$ to $L_{p'}(\RR)$. 
The obtained numerical results are compared with several analytical estimates.
The numerical method used is straightforward; however, its convergence remains an empirical fact.

The short Section\,\ref{sec:open-problems} 
contains some open questions and conjectures
that lie close to the paper's contents.

In the final Section\,\ref{sec:comments}, 
bibliographical and terminological comments
are gathered.

\smallskip\noindent
\centerline{---------------------}

\smallskip
The following
equivalent forms of the relation
\eqref{pqr} will be used repeatedly as convenient. 
Let $\{x,y,z\}=
\{p,q,r\}$ in any order. Then
$$
 \frac{1}{x}+\frac{1}{y}=1+\frac{1}{z'},
 \qquad
 \frac{1}{x'}+\frac{1}{y'}=\frac{1}{z},
\qquad
x'\geq z.
$$
Consequently,
$
\min(p',q',r')\geq \max(p,q,r),
$
with equality if and only if $1\in \{p,q,r\}$.

\section{Preliminaries and the proof in the large}
\label{sec:proof-general}

The exponents $q,p,r$ and the convolution kernel $k\in L_q$ are assumed fixed. 

By definition of the norm of the operator $K$, for any $\eps>0$ there exists a function $f_{\eps}\in L_p$ such that
$\norm{f_\eps}_p=1$ and $\norm{Kf_\eps}_{r'}\ge \norm{K}_{p,r}(1-\eps)$. 
Any such function will be called an {\em $\eps$-maximizer}.

\begin{definition}
\label{def:maximizing_sequence}
 A sequence $(f_n)$ of norm one functions from $L_p$ is a {\em maximizing sequence}\ (for the operator $K$)
 if $\norm{Kf_n}_{r'}\to \norm{K}_{p,r}$ as $n\to\infty$.
\end{definition}

The proof of Theorem~\ref{thm:minimizer_exists} aims, quite obviously, at constructing a maximizing sequence for the operator $K$ that converges in norm. 
The limit will be a norm one function and a maximizer.
We will start out with an arbitrarily chosen maximizing sequence,  apply to its members certain ``improving operators'' and shifts (translations), and finally select a suitable, strongly convergent subsequence.

\smallskip
Note a few trivial but important properties.

\smallskip
(1) If a function $f$ is an $\eps$-maximizer and $\eps_1>\eps$, then $f$ is also an $\eps_1$-maximizer.

\smallskip
(2) The set of $\eps$-maximizers is shift-invariant (since
convolution operators commute with shifts). 

\smallskip
(3) Any subsequence of a maximizing sequence is itself 
a maximizing sequence.

\smallskip
Let us introduce some notions to be used in the proof:
a special $\eps$-maximizer, $\delta${-near-}centered function, a tight $L_p$ sequence, and a few more.%
\eref{com:def-tight}   

\begin{definition}
\label{def:eps-maximizator}
Let $f\in L_p$ be an $\eps$-maximizer. In Section~\ref{ssec:improving}
we define a nonlinear \emph{improving operator}\ $B:\,L_p\to L_p$,
such that $\|Bf\|_p=1$ and $\|k*Bf\|_{r'}\geq \|k*f\|_{r'}$, so that $Bf$ is also an $\eps$-maximizer (Lemma~\ref{lemma:norm-increase}). The so obtained
$\eps$-maximizers will be called \emph{special $\eps$-maximizers}. 
\end{definition}

The operator $B$ appears naturally in the necessary condition
of extremum, i.e.\ the equation that must be satisfied by a
maximizer if one exists
(Section\,\ref{ssec:necessary-cond-extremum}).

A crucial property of the operator $B$ that justifies the qualifier ``improving'' (as opposed to ``not-worsening'', say)
is the fact that the 
$B$-image of a weakly convergent sequence of $L_p$ functions converges in $L_p$ norm on bounded sets
(and moreover, on any sets of finite measure) in $\RR^d$ (Lemma~\ref{lemma:Maximizer-converge}).    

\begin{definition}
\label{def:delta-support}
Given a function $f\in L_p$ and a unit vector%
\footnote{We use the standard Euclidean inner product and the Euclidean norm in $\RR^d$.}
$v\in\RR^d$, the \emph{$\delta$-diameter of the function $f$ of order $p$ in the direction $v$}\
is the nonnegative number
$$
 \diam^p_{\delta,v}(f)=\inf_{b>a}\left\{b-a\,\left|\,\int_{a<(x,v)<b} |f|^p\geq \|f\|_p^p-\delta\right.\right\}. 
$$ 
In this formula we implicitly assume that $\delta<\|f\|_p^p$. If this is not the case (in particular, if $f=0$) we define $\diam^p_{\delta,v}(f)=0$.

Suppose $a$ and $b$ are such that $b-a=\diam^p_{\delta,v}(f)$ and $\int_{a<(x,v)<b} |f|^p= \|f\|_p^p-\delta$. The existence of such $a$ and $b$ is obvious.%
\eref{com:delta-support}
We say that the segment $[a,b]$ is the \emph{$\delta$-near-support of the function $f$ of order $p$ in the direction $v$}\ and denote it $\dsupp^p_{\delta,v}(f)$. 
The function $f$ is \emph{$\delta$-near-centered of order $p$ in the direction $v$}\ if $a\le 0\le b$ and
$$
\int_{a<(x,v)<0} |f|^p=\int_{0<(x,v)<b} |f|^p= \frac{\norm{f}_p^p-\delta}{2}.
$$

Let us fix, once for all, an orthonormal basis  $\{e_1,\dots,e_d\}$ in $\RR^d$  (that is, fix the coordinate axes).

We say that the function $f$ is \emph{$\delta$-near-centered of order $p$}\
if $f$ is $\delta$-near-centered of order $p$ in the direction $e_j$ for $j=1,\dots,d$.
\end{definition}

Clearly, any function can be made $\delta$-near-centered by means of a suitable shift.
However, different values of $\delta$ may require different shifts.

\smallskip
The transformation of functions corresponding to the argument shift by a vector $a$ will be denoted $T_a$; that is, $T_a f(x)=f(x-a)$.  

\begin{remark}
The above defined centering can be called a mass-centering. 
As a natural alternative, one might propose the geometric centering, whereby the $\delta$-near-support of a $\delta$-near-centered function in the given direction would be a symmetric interval $[-b,b]$. However this latter approach is not suitable for our proof
because Lemma~\ref{lemma:bounded_shifts} would be lost.  
\end{remark}


\begin{definition}
\label{def:concentration}
A sequence of functions $f_n\in L_p$ is \emph{relatively tight}\ if for any $\delta>0$
there exists $n_0=n_0(\delta)$ such that 
$$
\sup_{n\ge n_0} \sup_{\|v\|=1}\diam^p_{\delta,v}(f_n)<\infty.
$$ 
 
A sequence of functions $f_n\in L_p$ is \emph{tight}\
if for any $\delta>0$ it is \emph{$\delta$-near-finite}\ (of order $p$), which means that
there exist $n_0$ and a cube $Q$ in $\RR^d$
with edges parallel to the coordinate axes, such that for $n\geq n_0$
$$
 \int_{Q} |f_n|^p\ge \norm{f_n}_p^p-\delta.
$$
\end{definition}

As it turns out (see Lemma~\ref{lemma:concentration-to-strong-concentration}), a relatively tight sequence is  tight
provided the $\delta$-finiteness property holds for
just one sufficiently small (depending on $\sup_n\norm{f_n}_p$) positive $\delta$.
 
\medskip
We are ready to present a high-level structure of the proof of Theorem~\ref{thm:minimizer_exists}. 
 
\begin{proofmain} 
Introduce the following classes of function sequences in $L_p$ defined in terms of imposed constraints.

\begin{enumerate}

\item
The class $\Seq{Max}$ comprises all maximizing sequences (for the convolution operator $K:\,L_p\to L_{r'}$). 

\item
The class $\Seq{SMax}$ comprises all special maximizing sequences,
that is, maximizing sequences  of the form $(Bf_n)$, where $(f_n)\in\Seq{Max}$. 

\item
The class $\Seq{RTgt}$ comprises all relatively tight sequences.%

\item
The class $\Seq{Tgt}$ comprises all tight sequences.

\item
The class $\Seq{WCvg}$ comprises all weakly convergent sequences. 

\item
The class $\Seq{LCvg}$ comprises all locally convergent sequences, i.e.\ sequences converging in $L_p$ norm on any bounded measurable subset of $\RR^d$.

\item
The class $\Seq{Cvg}$ comprises all sequences converging in $L_p(\RR^d)$.

\end{enumerate}

Note that a subsequence of a sequence that belongs to
any of these classes also belongs to that class.

\smallskip
{\bf The construction.}

1. We start out with an arbitrary sequence $(f_n)\in\Seq{Max}$.

2. Applying the operator $B$ to its members we get the sequence $(Bf_n)\in\Seq{SMax}$.

3. In view of the inclusions $\Seq{Max}\subset\Seq{RTgt}$ (Corollary~\ref{cor:maximizer-concentration}) and 
$\Seq{SMax}\subset\Seq{Max}$
(Corollary~\ref{cor:B_preserves_maximization}), we have
 $(Bf_n)\in\Seq{RTgt}$.

4. Putting $\delta_0=1/4$ (any $\delta_0<1/3$ is as good), we can find vectors $a_n$ such that the shifted functions 
$g_n=T_{a_n}(Bf_n)$ are  $\delta_0$-near-centered (Lemma~\ref{lemma:bounded_shifts}).

5. Lemma~\ref{lemma:subseq-strong-concentration}\ implies $(g_n)\in\Seq{Tgt}$.

6. The operator $B$ commutes with shifts. (It is a rather trivial fact, yet it is stated as Lemma~\ref{lemma:B-translation-invariant}).
Consequently, $g_n=B(T_{a_n}f_n)$. The class $\Seq{Max}$ is shift-invariant, hence
$(g_n)\in\Seq{SMax}\cap\Seq{Tgt}$.

7. (This is the most subtle step from the logic of proof viewpoint: we ``undo'' the operator $B$ in order to
select a subsequence \emph{in the pre-image}).
The sequence $(T_{a_n}f_n)$ is bounded in $L_p$, 
hence it contains a weakly converging subsequence, which we denote $(\shift f_m)$, avoiding
multilevel subscripts.

8. Put $\shift g_m=B\shift f_m$. Then, on the one hand, $(\shift g_m)$ is a subsequence of the sequence $(g_n)$ thus inheriting
the class memberships of the latter. On the other hand,  
since $(\shift f_m)\in\Seq{Max}\cap\Seq{WCvg}$, Lemma~\ref{lemma:Maximizer-converge} implies
$(\shift g_m)\in\Seq{LCvg}$. As a result, $(\shift g_m)\in\Seq{Max}\cap\Seq{LCvg}\cap\Seq{Tgt}$. 

9. Applying Lemma~\ref{lemma:final} we conclude that $(\shift g_m)\in\Seq{Max}\cap\Seq{Cvg}$. Let $h=\lim_{m\to\infty} \shift g_m$.
Then $\norm{h}_p=1$ and $\norm{Kh}_{r'}=\lim\norm{K\shift g_m}_{r'}=\norm{K}_{p,r}$  by continuity.
The function $h$ is a maximizer.
\end{proofmain}

Theorem~\ref{thm:minimizer_exists} is proved modulo the statements referred to at the various steps of the construction.
Proofs of all those statements are given in the next two sections.
 
The proof of Lemma~\ref{lemma:concentration_property}, whose Corollary~\ref{cor:B_preserves_maximization} is used at Step~3, is the longest. We devote the whole Section~\ref{sec:non-spreading}\ to it and break the proof into short steps. 
The other results referred to in the above construction 
are proved in Section~\ref{sec:lemmas}.

\section{Estimates for \texorpdfstring{$\delta$}{delta}-diameters of near-maximizers}
\label{sec:non-spreading}

The main technical result of this Section is 
Lemma~\ref{lemma:concentration_property}, while the conceptual  
conclusion is Corollary~\ref{cor:maximizer-concentration}. 
We approach the proof of Lemma~\ref{lemma:concentration_property} through a chain of preparatory results, of which 
all but Lemma~\ref{lemma:controlled_support} are very simple.

The indicator function of a set $\Omega$ will be denoted 
$\indf{\Omega}$; if the set $\Omega$ is defined by means of a property (or predicate) $P$, then the indicator function is written as $\indf{P}$.

\begin{lemma}
\label{lemma:u}
	If $\gamma>1$, $\lambda\in(0,1/2)$ and $u\in[\lambda,1-\lambda]$, then
	$$u^\gamma+(1-u)^\gamma\le 1-\kappa,$$
	where $\kappa=\kappa(\lambda,\gamma)=2\lambda\rbr{1-2^{1-\gamma}}>0$.
\end{lemma}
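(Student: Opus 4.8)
The plan is to prove the elementary inequality
$$u^\gamma+(1-u)^\gamma\le 1-\kappa$$
by reducing it to a statement about a single-variable function and then locating the maximum of that function on the interval $[\lambda,1-\lambda]$. First I would introduce the auxiliary function $\varphi(u)=u^\gamma+(1-u)^\gamma$ on $[0,1]$ and observe that it is symmetric about $u=1/2$, where it attains its minimum value $2^{1-\gamma}$ (since $\gamma>1$ makes $t\mapsto t^\gamma$ strictly convex, so $\varphi$ is convex). At the endpoints $u=0$ and $u=1$ we have $\varphi=1$. Thus on $[\lambda,1-\lambda]$ the convex function $\varphi$ takes its largest values at the endpoints of this subinterval, namely at $u=\lambda$ and (by symmetry) $u=1-\lambda$, where $\varphi(\lambda)=\lambda^\gamma+(1-\lambda)^\gamma$.

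The crux is therefore to bound $\lambda^\gamma+(1-\lambda)^\gamma$ from above by $1-\kappa$ with the explicit $\kappa=2\lambda(1-2^{1-\gamma})$. The natural route is to compare $\varphi$ with the chord (secant line) of the convex function over a suitable interval. Since $\varphi$ is convex on $[0,1]$ and symmetric, I would consider its restriction to $[0,1/2]$: here $\varphi$ lies below the chord joining the endpoints $(0,\varphi(0))=(0,1)$ and $(1/2,\varphi(1/2))=(1/2,2^{1-\gamma})$. That chord has the equation $L(u)=1-2u(1-2^{1-\gamma})$, so convexity gives $\varphi(u)\le L(u)$ for $u\in[0,1/2]$. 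Evaluating at $u=\lambda\in(0,1/2)$ yields exactly
$$\lambda^\gamma+(1-\lambda)^\gamma\le 1-2\lambda(1-2^{1-\gamma})=1-\kappa,$$
and by symmetry the same bound holds at $u=1-\lambda$, hence on all of $[\lambda,1-\lambda]$.

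It remains only to check that $\kappa>0$, which is immediate: since $\gamma>1$ we have $1-\gamma<0$, so $2^{1-\gamma}<1$, whence $1-2^{1-\gamma}>0$, and $\lambda>0$ gives $\kappa=2\lambda(1-2^{1-\gamma})>0$. I expect the only genuinely delicate point to be the clean invocation of convexity to get the chord bound $\varphi(u)\le L(u)$ on $[0,1/2]$ — one should be explicit that convexity of $t\mapsto t^\gamma$ for $\gamma>1$ is what underlies both the convexity of $\varphi$ and the position of its minimum at the midpoint. Everything else is routine arithmetic. An equivalent and perhaps even shorter finish would bypass the midpoint entirely: apply the convexity (chord) inequality directly on $[0,1]$, writing $u=(1-u)\cdot 0+u\cdot 1$ to see that $u^\gamma\le u$ and symmetrically $(1-u)^\gamma\le(1-u)$ are too weak, so the midpoint-based chord is the efficient choice to capture the correct linear-in-$\lambda$ rate.
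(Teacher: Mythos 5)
Your proof is correct and follows essentially the same route as the paper: both arguments rest on the convexity of $u\mapsto u^\gamma+(1-u)^\gamma$ and the chord bound $u^\gamma+(1-u)^\gamma\le 1-2u(1-2^{1-\gamma})$ on $[0,1/2]$ obtained by writing $u$ as a convex combination of $0$ and $1/2$. The only cosmetic difference is that you first reduce to the endpoint $u=\lambda$ via the maximum principle for convex functions, whereas the paper applies the chord bound at a general $u\in[\lambda,1/2]$ and then uses monotonicity of the resulting linear bound.
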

\begin{proof}
	For $\gamma>1$ the function $h(u)=u^\gamma+(1-u)^\gamma$ is convex and symmetric about  $u=1/2$. We may assume that $u\in[\lambda,1/2]$. 
	By convexity, we have the chain of inequalities
	\begin{multline*}
	h(u)=h((1-2u)\cdot 0+ 2u\cdot 1/2)\le (1-2u)h(0) + 2u h(1/2)=\\
	=(1-2u)+2u\cdot 2(1/2)^\gamma=1-2u(1-2^{1-\gamma})\le 1-2\lambda(1-2^{1-\gamma}),
	\end{multline*}
	which proves the Lemma.%
	\eref{com:subadditivity_lemma}
\end{proof}

\begin{lemma}\label{lemma:h1+h2}
    Let $\Omega$ be a measure space, $\gamma>1$, and $0<\lambda<1/2$. 
    Suppose that $g\in L_1(\Omega)$ with norm $\|g\|_1=1$ is split into the sum $g=g_1+g_2$ and the summands satisfy  
    $\|g_i\|_1\geq \lambda$ ($i=1,2$)     
    and $g_1g_2=0$.
	Then 
    $$
       \|g_1\|_1^\gamma+\|g_2\|_1^\gamma\leq 1-\kappa,
    $$
    with $\kappa=\kappa(\lambda,\gamma)$, the same as in Lemma~{\rm\ref{lemma:u}}.
\end{lemma}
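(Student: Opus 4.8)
The plan is to reduce Lemma~\ref{lemma:h1+h2} directly to Lemma~\ref{lemma:u}. Since $g_1 g_2 = 0$, the two summands have disjoint (essential) supports, so their $L_1$ norms add up: because $\|g\|_1 = 1$ and $g = g_1 + g_2$ with $|g| = |g_1| + |g_2|$ a.e., we get $\|g_1\|_1 + \|g_2\|_1 = \|g\|_1 = 1$. First I would set $u = \|g_1\|_1$, so that $\|g_2\|_1 = 1 - u$. The hypotheses $\|g_i\|_1 \geq \lambda$ for $i = 1,2$ then translate exactly into $\lambda \leq u \leq 1 - \lambda$, i.e. $u \in [\lambda, 1-\lambda]$.

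With this notation the claimed inequality reads $u^\gamma + (1-u)^\gamma \leq 1 - \kappa$, which is precisely the conclusion of Lemma~\ref{lemma:u} with the same $\kappa = \kappa(\lambda,\gamma)$. So the entire argument is: verify that the hypotheses of Lemma~\ref{lemma:h1+h2} force $u \in [\lambda, 1-\lambda]$, then invoke Lemma~\ref{lemma:u}.

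The only point requiring a word of care is the additivity of the norms. This rests on the disjointness condition $g_1 g_2 = 0$, which I read as saying that (up to a null set) at no point are both $g_1$ and $g_2$ nonzero; equivalently $|g| = |g_1| + |g_2|$ pointwise a.e., whence $\int |g| = \int |g_1| + \int |g_2|$ by additivity of the integral. I would state this one-line computation explicitly, since it is the only place where the structural hypothesis $g_1 g_2 = 0$ enters. Everything else is pure bookkeeping.

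I do not expect any genuine obstacle here: this lemma is an immediate "translation" of the scalar inequality of Lemma~\ref{lemma:u} into the language of disjointly-supported $L_1$ functions, and its role in the paper is presumably to apply the convexity estimate to a function $|f|^p$ split across a partition of $\RR^d$ (this is why $\gamma$ will later be taken to be a ratio of exponents and $g$ will be $|f|^p$ normalized to unit $L_1$ norm). The substance lives entirely in Lemma~\ref{lemma:u}; Lemma~\ref{lemma:h1+h2} is its packaging.
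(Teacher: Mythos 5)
Your proof is correct and is exactly the paper's own argument: use $g_1g_2=0$ to get $\|g_1\|_1+\|g_2\|_1=1$, set $u=\|g_1\|_1\in[\lambda,1-\lambda]$, and invoke Lemma~\ref{lemma:u}. The paper states this in two lines; your version merely spells out the norm-additivity step.
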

\begin{proof}
	Since $g_1 g_2=0$, we have $\|g_1\|_1+\|g_2\|_1=1$.
	It remains to apply Lemma~\ref{lemma:u} with $u=\|g_1\|_1$.
\end{proof}

\begin{lemma}
\label{lemma:midpoint-average}
Let $R>0$ and $f\in L_1([-R,R])$. Then for any $a<R$ there exists $t_0$, $|t_0|\leq R-a$ such that
$$
 \frac{1}{2a}\int_{|t-t_0|\leq a} |f(t)|\leq \frac{1}{R}\|f\|_1.
$$ 
 \end{lemma}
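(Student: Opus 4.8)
The plan is to control the minimal value of the ``sliding window'' integral
$\Phi(t_0)=\int_{|t-t_0|\le a}|f(t)|\,dt$ by its \emph{average} over the admissible centers $t_0\in[-(R-a),R-a]$, and then to combine the resulting estimate with the trivial bound $\Phi(t_0)\le\|f\|_1$. Throughout I would extend $f$ by zero outside $[-R,R]$ and set $I=[-(R-a),R-a]$, which is a genuine interval of length $2(R-a)>0$ since $a<R$.

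First I would integrate $\Phi$ over $I$ and switch the order of integration (Tonelli applies, the integrand being nonnegative):
$$
\int_I\Phi(t_0)\,dt_0=\int_\RR |f(t)|\,\bigl|\{t_0\in I:\,|t_0-t|\le a\}\bigr|\,dt\le 2a\,\|f\|_1,
$$
because the inner measure never exceeds the window length $2a$, and $f$ is supported in $[-R,R]$. Hence the average of $\Phi$ over $I$ is at most $\frac{2a}{2(R-a)}\|f\|_1=\frac{a}{R-a}\|f\|_1$, so there exists a center $t_0\in I$ (which automatically satisfies $|t_0|\le R-a$) with $\Phi(t_0)\le\frac{a}{R-a}\|f\|_1$.

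For this particular $t_0$ I would then apply two bounds at once: the averaging bound just obtained, and the trivial bound $\Phi(t_0)\le\|f\|_1$. Dividing by $2a$ yields
$$
\frac{1}{2a}\Phi(t_0)\le\min\!\left(\frac{1}{2(R-a)},\,\frac{1}{2a}\right)\|f\|_1.
$$
Since $2(R-a)+2a=2R$, at least one of the numbers $2(R-a)$, $2a$ is $\ge R$, so the larger denominator gives a reciprocal $\le 1/R$; thus the minimum on the right is $\le\frac1R\|f\|_1$, which is exactly the claim.

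The only point requiring any care — the ``obstacle'', such as it is — is that the averaging argument alone is insufficient: it produces the constant $\frac{1}{2(R-a)}$, which exceeds $\frac1R$ precisely when $a>R/2$, i.e.\ when the window is longer than the admissible range of centers. This is repaired by the trivial bound, whose constant $\frac{1}{2a}$ takes over exactly in that regime, the two meeting at $a=R/2$. Equivalently, one can simply split into the cases $2a\ge R$ (where any admissible $t_0$ works, since $\Phi(t_0)\le\|f\|_1$ and $\frac{1}{2a}\le\frac1R$) and $2a<R$ (where the averaging estimate already gives $\frac{1}{2(R-a)}\le\frac1R$).
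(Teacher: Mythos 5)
Your proposal is correct and follows essentially the same route as the paper: average the sliding-window integral $\Phi(t_0)$ over the admissible centers $t_0\in[-(R-a),R-a]$, bound that average by $\frac{2a}{2(R-a)}\|f\|_1$ via Fubini/Tonelli, and cover the regime $a>R/2$ with the trivial bound $\Phi(t_0)\le\|f\|_1$ (the paper dispatches this case as a ``tautology'' and otherwise picks $t_0$ as the minimizer of the continuous window function). The only cosmetic difference is that you package the dichotomy as a minimum of two bounds at a single $t_0$ rather than an explicit case split.
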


\begin{proof} Suppose $a\le R/2$; otherwise the inequality is a tautology.
The function $h(t)=\int_{|x-t|\leq a} |f(x)|$ defined for $|t|\leq R-a$ is continuous and satisfies the inequality
$$
 2(R-a)\,\min h(t)\leq \int_{|t|\leq R-a} h(t)\leq 2a\|f\|_1.
$$	
It suffices to choose $t_0$ as the point of minimum, $h(t_0)=\min h(t)$ and recall that $2(R-a)\ge R$.%
\eref{com:midpoint-average_lemma}	
\end{proof}

\begin{lemma}
\label{lemma:midpoint-average-p}
Let $f\in L_p(\RR^d)$. Given $R>a>0$, a unit vector $v\in\RR^d$ and $c\in\RR$, 
there exists $t_0\in[c-(R-a),c+(R-a)]$ such that
$$
 \frac{1}{2a}\int_{|(v,x)-t_0|\leq a} |f(x)|^p\leq \frac{1}{R} \int_{|(v,x)-c|\leq R} |f(x)|^p.
$$ 
\end{lemma}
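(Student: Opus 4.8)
The plan is to reduce the statement to the one-dimensional Lemma~\ref{lemma:midpoint-average} by slicing $\RR^d$ along the direction $v$. First I would pass to an orthonormal coordinate system $(s,y)\in\RR\times\RR^{d-1}$ whose first axis is $v$, so that $(v,x)=s$; since Lebesgue measure is rotation-invariant, this change of variables affects none of the integrals in question. Writing $f$ in these coordinates and setting $g(s,y):=|f(s,y)|^p\ge 0$, I define the marginal
$$
 \phi(s)=\int_{\RR^{d-1}} g(s,y)\,dy.
$$
By Tonelli's theorem (the integrand is nonnegative and $|f|^p\in L_1(\RR^d)$), the function $\phi$ is measurable, finite for a.e.\ $s$, and belongs to $L_1(\RR)$. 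Moreover, for every $s_1<s_2$,
$$
 \int_{s_1<(v,x)<s_2} |f(x)|^p\,dx=\int_{s_1}^{s_2}\phi(s)\,ds ,
$$
again by Tonelli. In particular the two slab integrals appearing in the asserted inequality are exactly the $\phi$-integrals over the intervals $[t_0-a,t_0+a]$ and $[c-R,c+R]$.

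With this identification in place, put $\psi(t)=\phi(t+c)$ on $[-R,R]$; then $\psi\in L_1([-R,R])$ and $\norm{\psi}_1=\int_{c-R}^{c+R}\phi=\int_{|(v,x)-c|\le R}|f|^p$. Applying Lemma~\ref{lemma:midpoint-average} to $\psi$ produces a point $t_0'$ with $|t_0'|\le R-a$ and
$$
 \frac{1}{2a}\int_{|t-t_0'|\le a}\psi(t)\,dt\le \frac{1}{R}\norm{\psi}_1 .
$$
Unwinding the shift by setting $t_0=t_0'+c\in[c-(R-a),c+(R-a)]$ and translating the interval of integration back by $c$, this is precisely the claimed inequality.

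I do not expect a genuine obstacle here: the only point requiring care is the measure-theoretic bookkeeping of the slicing, namely checking via the rotation and Tonelli that $\phi$ is a legitimate $L_1(\RR)$ function and that the two slab integrals coincide with the corresponding one-dimensional integrals of $\phi$. Once that reduction is made, the statement is just the one-dimensional Lemma~\ref{lemma:midpoint-average} applied to the nonnegative function $\psi$, the endpoint bound $|t_0'|\le R-a$ turning into $t_0\in[c-(R-a),c+(R-a)]$ after the shift. Note also that the continuity of the auxiliary averaging function used inside Lemma~\ref{lemma:midpoint-average} holds automatically for any $L_1$ integrand, so no regularity of $f$ beyond $f\in L_p$ is needed.
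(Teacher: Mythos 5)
Your proposal is correct and follows essentially the same route as the paper: rotate coordinates so that $v$ becomes the first axis, form the marginal $x_1\mapsto\int|f(x)|^p\,dx_2\cdots dx_d$, and apply the one-dimensional Lemma~\ref{lemma:midpoint-average}; the paper merely normalizes $c=0$ where you carry the shift by $c$ explicitly. Your added care with Tonelli and the translation bookkeeping is fine but does not change the argument.
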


\begin{proof}
We may assume that $v=(1,0,\dots,0)$ and $c=0$. The result follows by applying Lemma~\ref{lemma:midpoint-average} to the one-variable function
$x_1\mapsto \int |f(x)|^p\,dx_2\dots dx_d$ considered on the interval $x_1\in[-R,R]$.
\end{proof}

\begin{definition}
Let $A$ be a map from $L$ to $\tilde L$, where $L$ and $\tilde L$ are some spaces of measurable functions in $\RR^d$. Suppose $a>0$ and a unit vector $v$ in $\RR^d$ are given.
We say that the map $A$ is an \emph{$a$-expander in the direction $v$}\ if the property
$f(x)=0$ a.e.\ for $t_1<(x,v)<t_2$, where $-\infty\leq t_1<t_2\leq+\infty$, implies the property $Af(x)=0$ a.e.\ for  $t_1+a<(x,v)<t_2-a$. 
\end{definition}

The next Lemma utilizes the notions and notation introduced
in Definition~\ref{def:delta-support}.

\begin{lemma}
\label{lemma:controlled_support}
Let $A$ be a linear bounded operator from $L_p(\RR^d)$ to $L_s(\RR^d)$, where $1\le p<s<\infty$.
 Suppose $A$ is an $a$-expander in the direction $v$. 
 Let $\|f\|_p=1$ and $D=D_{\delta,v}^p(f)$.  For any $\beta>0$
 there are two possibilities: (i) either $D\leq 8\beta a$ or (ii) $D>8\beta a$ and
 \beq{op-bdd-expansion}
 \norm{Af}_s^s< 
 \norm{A}^s 
 \rbr{1-\kappa+
 \beta^{-\ga}},
 \eeq
 where $\gamma=s/p$ and $\kappa=\kappa(\delta/2,\gamma)=\delta(1-2^{1-\gamma})$, consistent with notation in Lemma~{\rm\ref{lemma:u}}.
\end{lemma}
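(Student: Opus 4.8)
The plan is to split $f$ by a hyperplane $(x,v)=t_0$ into $f_1=f\indf{\{(x,v)<t_0\}}$ and $f_2=f\indf{\{(x,v)\ge t_0\}}$ and to reduce the estimate to the subadditivity Lemma~\ref{lemma:h1+h2}, the point being to choose $t_0$ so that \emph{simultaneously} (a) both halves carry $L_p^p$-mass at least $\delta/2$, and (b) the slab of width $4a$ about the cut is almost massless. Write $\mu(t)=\int_{(x,v)<t}|f|^p$ for the cumulative mass (continuous, since $|f|^p\in L_1$), and let $q_-,q_+$ satisfy $\mu(q_-)=\delta/2$ and $\mu(q_+)=1-\delta/2$. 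Then $[q_-,q_+]$ carries mass exactly $1-\delta$, so by the very definition of the $\delta$-diameter as the \emph{minimal} width of a slab of mass $\ge 1-\delta$, its width $W=q_+-q_-$ obeys $W\ge D$. This inequality is what converts the hypothesis $D>8\beta a$ into smallness of the cut slab; note also that we may assume $\beta>1$, since for $\beta\le 1$ one has $\beta^{-\gamma}\ge 1>\kappa$ and the assertion is immediate from $\|Af\|_s\le\|A\|$.

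Next I would apply Lemma~\ref{lemma:midpoint-average-p} on the window $[q_-,q_+]$ (centre $c=(q_-+q_+)/2$, radius $R=W/2>2a$, sub-slab half-width $2a$) to produce a point $t_0\in[q_-+2a,\,q_+-2a]$ with
\[
m_G:=\int_{|(x,v)-t_0|\le 2a}|f|^p\ \le\ \frac{4a}{R}\int_{[q_-,q_+]}|f|^p=\frac{8a(1-\delta)}{W}\le\frac{8a}{D}<\frac1\beta .
\]
Because $q_-<t_0<q_+$, the masses $m_i=\|f_i\|_p^p$ satisfy $\mu(q_-)\le m_1=\mu(t_0)\le\mu(q_+)$, i.e.\ $m_1,m_2\in[\delta/2,1-\delta/2]$; hence Lemma~\ref{lemma:h1+h2} (applied to $g=|f|^p$, with $\lambda=\delta/2$ and $\gamma=s/p>1$) yields $m_1^\gamma+m_2^\gamma\le 1-\kappa$.

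Then I would invoke the expander property twice. First, since $f_1$ vanishes for $(x,v)>t_0$ and $f_2$ for $(x,v)<t_0$, the images vanish for $(x,v)>t_0+a$ and $(x,v)<t_0-a$ respectively; thus outside the overlap strip $O=\{|(x,v)-t_0|<a\}$ only one image is present, and $\int_{(x,v)\le t_0-a}|Af|^s\le\|Af_1\|_s^s\le\|A\|^s m_1^\gamma$, and likewise $\int_{(x,v)\ge t_0+a}|Af|^s\le\|A\|^s m_2^\gamma$, using $\|Af_i\|_s^s\le\|A\|^s\|f_i\|_p^s=\|A\|^s m_i^\gamma$. The crux is the overlap strip, and here I would use the expander a second time as a \emph{locality} statement: the function $f-f\indf{G}$, with $G=\{|(x,v)-t_0|\le 2a\}$, vanishes on the width-$4a$ slab $G$, so $A(f-f\indf{G})$ vanishes on $O$; hence $Af=A(f\indf{G})$ on $O$ and $\int_O|Af|^s\le\|A(f\indf{G})\|_s^s\le\|A\|^s m_G^\gamma$. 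This is exactly what makes the overlap error enter as the $s$-th power $m_G^\gamma$: a mere triangle inequality would contribute the norm $m_G^{1/p}$, which for large $\beta$ dominates $\beta^{-\gamma}$ and destroys the estimate. I expect this locality step, together with the balancing of the cut, to be the heart of the argument.

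Summing the three regions gives $\|Af\|_s^s\le\|A\|^s(m_1^\gamma+m_2^\gamma+m_G^\gamma)\le\|A\|^s(1-\kappa+m_G^\gamma)<\|A\|^s(1-\kappa+\beta^{-\gamma})$, which is case (ii); case (i) is simply the complementary alternative $D\le 8\beta a$. Thus the main obstacle is the overlap estimate, while the supporting difficulty is to meet the two competing demands on $t_0$ --- mass balance and slab smallness --- at once, which the bound $W\ge D$ renders compatible.
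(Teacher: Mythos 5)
Your proposal is correct and follows essentially the same route as the paper's proof: the same three-piece decomposition ($f_1$, $f_2$, and the slab piece $f\indf{G}$, which the paper calls $f_l$, $f_r$, $f_m$), the same use of Lemma~\ref{lemma:midpoint-average-p} to place the cut $t_0$ so that the width-$4a$ slab has small mass while both halves retain mass $\ge\delta/2$, the same application of Lemma~\ref{lemma:h1+h2}, and the same three-region expander argument with the overlap strip handled through $A(f\indf{G})$. The only differences are cosmetic (parametrizing by $q_\pm$ rather than by $c$ and $R$, and the explicit reduction to $\beta>1$).
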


\begin{proof} 
The cases $\delta\ge 1$ (where $D=0$) and $D\le 8\beta a$ are trivial. Therefore we assume that $\delta<1$ and $D>8\beta a$. 
Let $c$ and $R\ge 2a$ are such that
$$
  \norm{f \indf{(v,x)>c+R}}^p_{p}=\norm{f\indf{(v,x)<c-R}}^p_{p}\ge \frac{\delta}{2}.
$$
Clearly, $D\leq 2R$. 
By Lemma~\ref{lemma:midpoint-average-p} there exists $t_0\in [c-(R-2a),c+(R-2a)]$ such that
\beq{eqn:estmz0}
 \int\limits_{|(v,x)-t_0|\leq 2a} |f|^p
 \le \frac{4a}{R} \int\limits_{|(v,x)-c|\leq R} |f|^p 
 \le \frac{8a}{D} <\beta^{-1}.
\eeq
Denote (see Fig.~\ref{fig:domains_and_supports})
$$
  f_l(x)=f(x) \indf{(v,x)<t_0},\qquad 
  f_r(x)=f(x) \indf{(v,x)\ge t_0}.
$$
Then $f(x)=f_l(x)+f_r(x)$ and
$f_l(x)f_r(x)=0$, $\|f_l\|_p^p\ge \delta/2$, $\|f_r\|_p^p\ge\delta/2$.
Applying Lemma~\ref{lemma:h1+h2} to the pair of functions $g_1=|f_l|^p$ and $g_2=|f_r|^p$ we get%
 \eref{com:prevent_splitting}
$\|f_l\|_p^s+\|f_r\|_p^s\le 1-\kappa(\delta/2,\gamma)$. 
 
Introduce an yet another function,
 $$
  f_m(x) = f(x) \indf{|(v,x)-t_0|< 2a}.
 $$
 By \eqref{eqn:estmz0},
$
\norm{f_m}_p^s
<\beta^{-\ga}$.

\begin{figure}
\centerline{
	\includegraphics[width=0.6\textwidth]{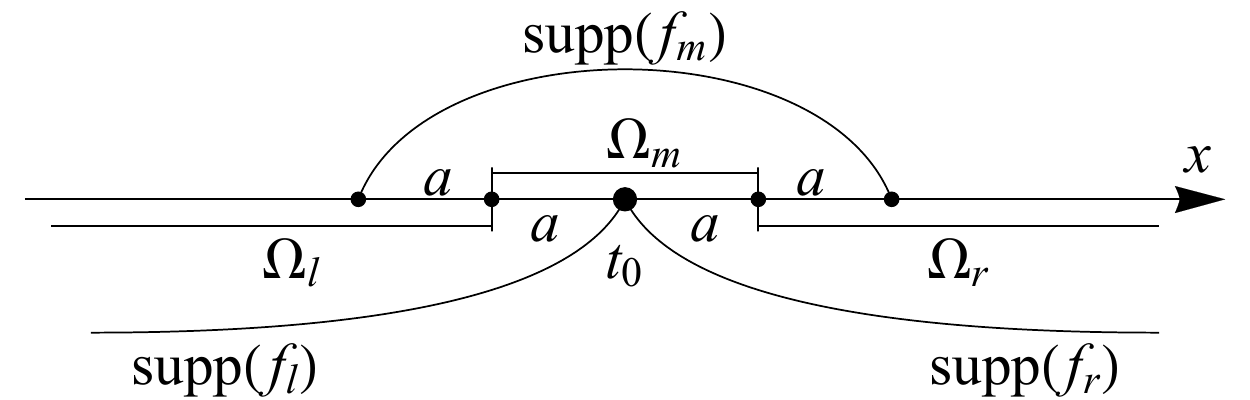}
}
\caption{Illustration of notation used in the proof of Lemma~\ref{lemma:controlled_support}}	
 \label{fig:domains_and_supports}
\end{figure}

The subsets in $\RR^d$ defined by the inequalities 
	\begin{align*}
	\Omega_l&=\fbr{x\,\left|\, (v,x)< t_0-a\right.},\\ 
	\Omega_m&=\fbr{x\,\left|\, |(v,x)-t_0|\le a\right.},\\
	\Omega_r&=\fbr{x\,\left|\,(v,x)> t_0+a\right.}
	\end{align*}
are pairwise disjoint and $\Omega_l\cup \Omega_m\cup \Omega_r=\RR^d$. We have
	\begin{align*}
	Af_{r}=0\;\;\text{при}\; x\in\Omega_l,\\ 
	Af_{l}=0\;\;\text{при}\; x\in\Omega_r,\\ 
	A(f-f_m)=0\;\;\text{при}\; x\in\Omega_m.
	\end{align*}
Therefore
\begin{align*}
	\norm{Af}_s^s&=\int_{\Omega_l} |Af_l|^s+\int_{\Omega_r} |Af_r|^s+\int_{\Omega_m} |Af_m|^s \le
	\\
	 &\le \norm{A}^s \rbr{\norm{f_l}_p^s+\norm{f_r}_p^s+\norm{f_m}_p^s}< 
	\\
	&< \norm{A}^s \rbr{ 1-\kappa+
    \beta^{-\ga}}.
\end{align*}
Q.E.D.
\end{proof}

\begin{lemma}
\label{lemma:controlled_support_optimized}
Suppose the operator $A$ satisfies the assumptions of Lemma~{\rm\ref{lemma:controlled_support}}. Suppose also that $\norm{f}_p=1$ and $\norm{Af}_s^s\geq \norm{A}^s (1-\tau)$. Then
for any $\delta>\tau(1-2^{1-\gamma})^{-1}$ 
the $\delta$-diameter 
$D=D_{\delta,v}^p(f)$
satisfies the inequality
$$
D\leq 8a(\kappa-\tau)^{-1/\gamma},
 \qquad
\kappa=\delta(1-2^{1-\gamma}).
$$ 
 \end{lemma}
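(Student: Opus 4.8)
The goal is to turn the dichotomy of Lemma~\ref{lemma:controlled_support} into a single quantitative bound on the $\delta$-diameter, by showing that under the hypothesis $\norm{Af}_s^s \ge \norm{A}^s(1-\tau)$ the first alternative $D \le 8\beta a$ must hold for a well-chosen $\beta$, and then optimizing over $\beta$.

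The plan is as follows. Fix $\delta$ with $\delta > \tau(1-2^{1-\gamma})^{-1}$, which is exactly the condition guaranteeing $\kappa = \delta(1-2^{1-\gamma}) > \tau$. I would apply Lemma~\ref{lemma:controlled_support} with a free parameter $\beta > 0$ and argue by contradiction against the second alternative. If alternative (ii) held, then \eqref{op-bdd-expansion} would give $\norm{Af}_s^s < \norm{A}^s(1-\kappa+\beta^{-\gamma})$. Combining this with the hypothesis $\norm{Af}_s^s \ge \norm{A}^s(1-\tau)$ and cancelling the common factor $\norm{A}^s$ (which is positive, since otherwise the statement is vacuous) yields $1-\tau < 1-\kappa+\beta^{-\gamma}$, i.e.\ $\beta^{-\gamma} > \kappa - \tau$. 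Equivalently, alternative (ii) forces $\beta < (\kappa-\tau)^{-1/\gamma}$.

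The idea is then to choose $\beta$ so as to defeat alternative (ii) and squeeze out the sharpest bound. Set $\beta = (\kappa-\tau)^{-1/\gamma}$, which is well-defined and positive precisely because $\kappa > \tau$. With this choice the inequality $\beta^{-\gamma} > \kappa - \tau$ becomes $\kappa - \tau > \kappa - \tau$, a contradiction; hence alternative (ii) is impossible and alternative (i) must hold, namely $D \le 8\beta a = 8a(\kappa-\tau)^{-1/\gamma}$. This is exactly the claimed bound. (A minor technical point: Lemma~\ref{lemma:controlled_support} states the strict inequality in (ii), so at the boundary value of $\beta$ one genuinely gets a contradiction rather than an equality, which is why the non-strict conclusion $D \le 8a(\kappa-\tau)^{-1/\gamma}$ comes out cleanly.)

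I do not anticipate a serious obstacle here: this is essentially an optimization-and-substitution argument built directly on the preceding lemma, and the main content has already been established. The only thing to handle with a little care is the degenerate case where $\norm{A}=0$, in which case $Af = 0$ and every $\delta$-diameter with $\delta<\norm{f}_p^p=1$ is $0$ by the convention in Definition~\ref{def:delta-support}, so the bound holds trivially; and one should note that the hypothesis $\delta>\tau(1-2^{1-\gamma})^{-1}$ is used solely to ensure $\kappa-\tau>0$ so that $(\kappa-\tau)^{-1/\gamma}$ makes sense as the optimal value of $\beta$.
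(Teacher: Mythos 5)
Your proposal is correct and follows essentially the same route as the paper: choose $\beta=(\kappa-\tau)^{-1/\gamma}$, note that alternative (ii) of Lemma~\ref{lemma:controlled_support} would give $1-\tau<1-\kappa+\beta^{-\gamma}=1-\tau$, a contradiction, so alternative (i) yields the stated bound. The extra remarks on the degenerate case $\norm{A}=0$ and on the role of the hypothesis $\delta>\tau(1-2^{1-\gamma})^{-1}$ are sound but not needed beyond what the paper already does.
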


\begin{proof}
Put $\beta=(\kappa-\tau)^{-1/\gamma}$ and apply Lemma~\ref{lemma:controlled_support}. Suppose the case (ii) takes place. Then
$$
 1-\tau\leq\frac{\norm{Af}_s^s}{\norm{A}^s}<
 1-\kappa+\beta^{-\ga}=1-\tau,
$$
a contradiction. Therefore the case (i) takes place and
we are done. 
\end{proof}

\begin{lemma}
\label{lemma:concentration_property}
Let $q>1$, $k\in L_{q}(\RR^d)$, and let $K:\,L_p\to L_{r'}$ be the convolution operator with kernel $k$. Put $\gamma=r'/p>1$. 
Suppose $\eps>0$ and $\delta>\eps r' (1-2^{1-\gamma})^{-1}$
are given. If $\rho>0$ 
is small enough, so that
$$
 \eps+\frac{2\rho^{1/q}}{\norm{K}_{p,r}}\leq 
 \delta \frac{1-2^{1-\gamma}}{r'}, 
$$
then for any unit vector $v\in\RR^d$ and any $\eps$-maximizer $f$ of the operator $K$ the inequality
\beq{delta-diam-estimate}
\diam^p_{\delta,v}(f)\leq c\diam^q_{\rho,v}(k),
\eeq
holds with
$$
 c=4\rbr{\delta(1-2^{1-\gamma})-r'\rbr{\eps+\frac{2\rho^{1/q}}{\norm{K}_{p,r}}}}^{-1/\gamma}.
$$
\end{lemma}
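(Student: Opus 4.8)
The plan is to deduce \eqref{delta-diam-estimate} from the optimized support estimate of Lemma~\ref{lemma:controlled_support_optimized} by splitting the kernel $k$ along the direction $v$ according to its $\rho$-near-support. Write $a=\diam^q_{\rho,v}(k)$ and let $[\al,\al+a]=\dsupp^q_{\rho,v}(k)$ be the $\rho$-near-support, so that by Definition~\ref{def:delta-support} one has $\int_{\al<(x,v)<\al+a}|k|^q=\norm{k}_q^q-\rho$. Set
$$
 k_0=k\,\indf{\al<(x,v)<\al+a},\qquad k_1=k-k_0,
$$
whence $\norm{k_1}_q^q=\norm{k}_q^q-\int_{\al<(x,v)<\al+a}|k|^q=\rho$, i.e.\ $\norm{k_1}_q=\rho^{1/q}$. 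Let $K_0,K_1$ be the convolution operators with kernels $k_0,k_1$; then $K=K_0+K_1$, and Young's inequality gives $\norm{K_1}_{p,r}\le\norm{k_1}_q=\rho^{1/q}$. The kernel $k_0$ lives, in the direction $v$, in a slab of width $a$; after a translation of $k_0$ (which alters neither $\norm{K_0}$, nor $\norm{K_0f}_{r'}$, nor the quantity $\diam^p_{\delta,v}(f)$ to be bounded) we may center that slab, so $k_0$ is supported in $|(x,v)|\le a/2$ and $K_0$ becomes an $(a/2)$-expander in the direction $v$: convolving with a function supported in $|(x,v)|\le a/2$ widens any $v$-slab on which $f$ vanishes by at most $a/2$ on each side. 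Thus $A=K_0$ satisfies the hypotheses of Lemma~\ref{lemma:controlled_support_optimized} with $s=r'$ (note $p<r'$ since $\ga=r'/p>1$) and expander parameter $a/2$.

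It remains to produce the parameter $\tau$. If $\delta\ge1=\norm{f}_p^p$ then $\diam^p_{\delta,v}(f)=0$ and there is nothing to prove, so assume $\delta<1$; since $0<1-2^{1-\ga}<1$ and $r'>1$, the smallness hypothesis then forces $\tilde\tau:=\eps+2\rho^{1/q}/N<1$, where $N=\norm{K}_{p,r}$. In particular $\eps+\rho^{1/q}/N<1$, so the numerator below is positive. Writing $\sigma=\rho^{1/q}$, I bound $\norm{K_0f}_{r'}\ge\norm{Kf}_{r'}-\norm{K_1f}_{r'}\ge(1-\eps)N-\sigma$ from below and $\norm{K_0}\le\norm{K}+\norm{K_1}\le N+\sigma$ from above, so that
$$
 \frac{\norm{K_0f}_{r'}}{\norm{K_0}}\ge\frac{(1-\eps)N-\sigma}{N+\sigma}
 =1-\frac{\eps N+2\sigma}{N+\sigma}\ge 1-\rbr{\eps+\frac{2\sigma}{N}}=1-\tilde\tau .
$$
Raising to the power $r'$ and using Bernoulli's inequality $(1-\tilde\tau)^{r'}\ge1-r'\tilde\tau$ (valid as $0\le\tilde\tau<1$, $r'\ge1$) yields $\norm{K_0f}_{r'}^{r'}\ge\norm{K_0}^{r'}\rbr{1-r'\tilde\tau}$, so Lemma~\ref{lemma:controlled_support_optimized} applies with $\tau=r'\tilde\tau=r'\rbr{\eps+2\rho^{1/q}/N}$.

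Finally, the standing hypothesis $\eps+2\rho^{1/q}/N\le\delta(1-2^{1-\ga})/r'$ is exactly the condition $\delta\ge\tau(1-2^{1-\ga})^{-1}$ required by Lemma~\ref{lemma:controlled_support_optimized}; when it is strict (the only case in which the constant is finite) we have $\kappa-\tau>0$ with $\kappa=\delta(1-2^{1-\ga})$, and Lemma~\ref{lemma:controlled_support_optimized} gives
$$
 \diam^p_{\delta,v}(f)\le 8\cdot\frac a2\,(\kappa-\tau)^{-1/\ga}
 =4\,\diam^q_{\rho,v}(k)\,\rbr{\delta(1-2^{1-\ga})-r'\rbr{\eps+\frac{2\rho^{1/q}}{N}}}^{-1/\ga},
$$
which is precisely \eqref{delta-diam-estimate} with the stated $c$ (in the boundary case the bound is trivially true).

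I expect the main obstacle to be bookkeeping of constants rather than conceptual difficulty. One must notice that \emph{centering} the slab turns $K_0$ into an $(a/2)$-expander, and that this halving is exactly what converts the factor $8$ of Lemma~\ref{lemma:controlled_support_optimized} into the factor $4$; and one must see that the two crude estimates — loss of $\sigma$ in the numerator and gain of $\sigma$ in the denominator — coalesce, after dropping a $\sigma$ from a denominator, into the single term $2\sigma/N$. The only genuinely inequality-sensitive point is the linearization $(1-\tilde\tau)^{r'}\ge1-r'\tilde\tau$, which simultaneously produces the factor $r'$ in front of $\eps+2\rho^{1/q}/N$ and relies on the a priori bound $\tilde\tau<1$ supplied by the smallness hypothesis together with $\delta<1$.
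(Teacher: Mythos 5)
Your proposal is correct and follows essentially the same route as the paper's proof: truncate $k$ to its $\rho$-near-support in the direction $v$, control the perturbation by Young's inequality, bound the ratio $\norm{K_0 f}_{r'}/\norm{K_0}$ from below, linearize via Bernoulli, and invoke Lemma~\ref{lemma:controlled_support_optimized} with the expander half-width accounting for the factor $4$. The only difference is notational (the paper sets $a$ equal to half the $\rho$-diameter from the outset), so nothing further is needed.
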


\begin{remark}
\label{rem:small_norm}
The function $\rho\mapsto D^q_{\rho,v}(k)$ is nonincreasing. Hence for fixed $\eps$ and $\delta$, given two kernels $k$ and $\tilde k$ 
with $|k|=|\tilde k|$ a.e.,  
a \emph{weaker}\ estimate (i.e.\ a smaller value of $\rho$ or larger value of $c$ in the r.h.s.\ of the inequality \eqref{delta-diam-estimate}) 
takes place for that of the two kernels with \emph{smaller}\ norm of the corresponding convolution operator. 
\end{remark}

\begin{proof}
Put $M=\norm{K}_{p,r}$ and $a=\frac{1}{2}D_{\rho,v}^q(k)$. Without loss of generality we may assume that
$\dsupp_{\rho,v}^q k=[-a,a]$. 
Let $\keps=k \indf{|(v,x)|\le a}$ and $\Keps$ is the convolution 
operator with kernel $\keps$.
We have $\norm{\keps-k}_q^q=\rho$ and, by Young's inequality, $\norm{\Keps-K}_{p,r}\leq \rho^{1/q}$.  In particular, $\norm{\Keps}_{p,r}\le M+\rho^{1/q}$.

Fix an $\eps$-maximizer $f\in L_p$ for the operator $K$.
We have $\norm{\Keps f}_{r'}\ge\norm{K f}_{r'} -\norm{\Keps-K}_{p,r}\norm{f}_p\ge M(1-\eps)-\rho^{1/q}$.
Therefore,
$$
\frac{\norm{\Keps f}_{r'}}{\norm{\Keps}_{p,r}}\ge\frac{M(1-\eps)-\rho^{1/q}}{M+\rho^{1/q}}>1-\eps-\frac{2\rho^{1/q}}{M}.
$$

The operator $\Keps$ is an $a$-expander in the direction $v$.	
Let us apply Lemma~\ref{lemma:controlled_support_optimized} with $A=\Keps$ and $s=r'$.
We have $\norm{Af}_s^s= \norm{A}^s (1-\tau)$,
where
$$
 1-\tau=\rbr{1-\eps-\frac{2\rho^{1/q}}{M}}^{r'}
 >1-r'\rbr{\eps+\frac{2\rho^{1/q}}{M}}>1-\delta(1-2^{1-\gamma})
$$
(due to the Bernoulli inequality and the inequality relating $\eps$, $\delta$ и $\rho$).
	
The estimate for $D$ provided by Lemma~\ref{lemma:controlled_support_optimized} yields \eqref{delta-diam-estimate}.
\end{proof}

\begin{corollary}
Let $q$, $k$ and $K$ be as in Lemma~{\rm\ref{lemma:concentration_property}}. Suppose that $\eps$, $\delta$, $\rho$ и $c$ are related by the equalities
\beq{diam-est-special-parameters}
 \delta=\frac{4r'}{1-2^{1-\gamma}}\eps,\quad \rho=\rbr{\norm{K}_{p,r}\eps}^q,
 \quad
 c=4(\eps r')^{-1/\gamma}.
\eeq
Then for any unit vector $v\in\RR^d$ and any $\eps$-maximizer $f$ of the operator $K$ the estimate \eqref{delta-diam-estimate} holds.
\end{corollary}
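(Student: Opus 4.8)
The plan is to read the Corollary as nothing more than a convenient specialization of Lemma~\ref{lemma:concentration_property}: with the three prescribed relations in hand, I would simply substitute the stated values of $\delta$, $\rho$ and $c$ into the Lemma and check that they constitute an admissible instance. Writing $M=\norm{K}_{p,r}$ and $\gamma=r'/p$ for brevity, the verification splits into checking the Lemma's two hypotheses and then matching the constant.

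First I would check the standing requirement $\delta>\eps r'(1-2^{1-\gamma})^{-1}$. Since the prescribed value $\delta=\frac{4r'}{1-2^{1-\gamma}}\eps$ is exactly four times the threshold on the right, this holds at once. Next I would check the smallness condition on $\rho$. The choice $\rho=(M\eps)^q$ gives $\rho^{1/q}=M\eps$, whence $\frac{2\rho^{1/q}}{M}=2\eps$ and the left-hand side $\eps+\frac{2\rho^{1/q}}{M}$ equals $3\eps$. On the right-hand side, $\delta\frac{1-2^{1-\gamma}}{r'}$ simplifies to $4\eps$ once the factor $1-2^{1-\gamma}$ cancels against the denominator of $\delta$. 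Thus the condition reduces to $3\eps\le 4\eps$, which is true.

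Finally I would confirm the constant. In the Lemma's formula for $c$ the term $\delta(1-2^{1-\gamma})$ equals $4r'\eps$, while $r'\rbr{\eps+\frac{2\rho^{1/q}}{M}}=3r'\eps$; their difference is $r'\eps$, so the Lemma delivers $c=4(r'\eps)^{-1/\gamma}$, which is exactly the prescribed value, and \eqref{delta-diam-estimate} follows.

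There is no genuine obstacle here: the whole content is the observation that this particular alignment of $\eps$, $\delta$ and $\rho$ is chosen precisely so that the awkward difference inside the exponent collapses to the clean quantity $\eps r'$. The only point requiring minor care is the repeated cancellation of the factor $1-2^{1-\gamma}$, which occurs both in the smallness check and in the evaluation of $c$; keeping track of it is the entirety of the bookkeeping.
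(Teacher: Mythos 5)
Your verification is correct and is exactly the intended argument: the paper states this Corollary without a separate proof precisely because it is the direct substitution of $\delta=\frac{4r'}{1-2^{1-\gamma}}\eps$ and $\rho^{1/q}=\norm{K}_{p,r}\eps$ into Lemma~\ref{lemma:concentration_property}, under which the hypothesis reduces to $3\eps\le 4\eps$ and the bracket in the Lemma's formula for $c$ collapses to $4r'\eps-3r'\eps=r'\eps$, yielding $c=4(\eps r')^{-1/\gamma}$. Nothing is missing.
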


\begin{corollary}
\label{cor:maximizer-concentration}
Any maximizing sequence $(f_n)$ of $L_p$ functions for the convolution operator $K$
is relatively tight.
\end{corollary}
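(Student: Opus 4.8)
The plan is to derive relative tightness directly from the Corollary with the explicit parameters \eqref{diam-est-special-parameters}, reducing the whole matter to a single uniform bound on the $\rho$-diameters of the fixed kernel $k$.

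First I would fix $\delta>0$; the task is then to produce $n_0$ and a finite bound for $\sup_{n\ge n_0}\sup_{\|v\|=1}\diam^p_{\delta,v}(f_n)$. I would choose $\eps=\delta(1-2^{1-\gamma})/(4r')$, so that $\delta$ and $\eps$ satisfy the first relation in \eqref{diam-est-special-parameters}; the accompanying quantities $\rho=\rbr{\norm{K}_{p,r}\eps}^q$ and $c=4(\eps r')^{-1/\gamma}$ are then fixed positive numbers depending only on $\delta$ and the data. Since $(f_n)$ is a maximizing sequence, $\norm{Kf_n}_{r'}\to\norm{K}_{p,r}$, so there is an $n_0$ with $\norm{Kf_n}_{r'}\ge\norm{K}_{p,r}(1-\eps)$ for all $n\ge n_0$; each such $f_n$ is thus an $\eps$-maximizer. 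The Corollary then applies and gives, for every unit vector $v$ and every $n\ge n_0$,
$$
\diam^p_{\delta,v}(f_n)\le c\,\diam^q_{\rho,v}(k).
$$

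The only remaining point — the one I expect to need a short separate argument — is that $\sup_{\|v\|=1}\diam^q_{\rho,v}(k)$ is finite, i.e.\ a bound on the $\rho$-diameter of the kernel that is uniform in the direction. Here the hypothesis $k\in L_q$ enters: I would pick $R>0$ so that $\int_{|x|>R}|k|^q<\rho$. For any unit vector $v$ one has $|(x,v)|\le|x|$, so the ball $\{|x|\le R\}$ lies inside the slab $\{|(x,v)|<R\}$, whence $\int_{|(x,v)|<R}|k|^q\ge\int_{|x|\le R}|k|^q>\norm{k}_q^q-\rho$. By the definition of the $\rho$-diameter this forces $\diam^q_{\rho,v}(k)\le 2R$ simultaneously for all $v$.

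Combining the two displays yields $\sup_{n\ge n_0}\sup_{\|v\|=1}\diam^p_{\delta,v}(f_n)\le 2cR<\infty$, and since $\delta>0$ was arbitrary, $(f_n)$ is relatively tight. The substance of the argument is carried entirely by Lemma~\ref{lemma:concentration_property}; this corollary is essentially a repackaging, the only genuinely new ingredient being the elementary direction-uniform tail estimate for $k$, which is trivial precisely because $k$ is a single fixed $L_q$ function (no uniformity over a sequence of kernels is required).
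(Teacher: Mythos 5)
Your proof is correct and follows essentially the same route as the paper: the paper's own (very brief) argument likewise fixes $\delta$, defines $\eps$ and $\rho$ by \eqref{diam-est-special-parameters}, and chooses $n_0$ so that each $f_n$ with $n\ge n_0$ is an $\eps$-maximizer, then invokes the corollary of Lemma~\ref{lemma:concentration_property}. The only difference is that you make explicit the direction-uniform bound $\sup_{\|v\|=1}\diam^q_{\rho,v}(k)\le 2R$ coming from an $L_q$ tail estimate for the fixed kernel, a detail the paper leaves implicit; your treatment of it is correct.
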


Indeed, let $f_n$ be an $\eps_n$-maximizer and $\eps_n\to 0$. Given a $\delta>0$ we define $\eps$ and $\rho$ by \eqref{diam-est-special-parameters} and choose $n_0$ in Definition~\ref{def:concentration} by the condition
$\eps_n\leq\eps$  for $n\geq n_0$.

\section{Lemmas for construction of a convergent maximizing sequence}
\label{sec:lemmas}

Recall that we always assume the relation $1/q+1/p+1/r=2$.

Introduce the operation $z\mapsto\pow{z}{\gamma}=\overline{z}|z|^{\gamma-1}$, where $z\in\CC$, $\gamma\in\RR$, and the bar stands for complex conjugation.
Thus, $z\pow{z}{\gamma}=|z|^{\gamma+1}$
and $|\pow{z}{\gamma}|=|z|^\gamma$.

\subsection{Auxiliary numerical inequalities}
\label{ssec:num_ineq}

\begin{lemma}
For any $u,v\in\CC$ the following inequalities hold:\\
{\rm (a)} for $0<\gamma\leq 1$,
\label{lemma:aux-ineq}
\beq{power-difference-ineq}
  \abr{\pow{u}{\gamma}-\pow{v}{\gamma}}\le C|u-v|^{\gamma},
  \quad
  C=2^{1-\gamma};
\eeq
{\rm (b)} for $\gamma>1$,
\beq{power-difference-ineq2}
  \abr{\pow{u}{\gamma}-\pow{v}{\gamma}}\le C|u-v|\,\rbr{\max(|u|,|v|)}^{\gamma-1},\quad
  C={\gamma}.
\eeq
\end{lemma}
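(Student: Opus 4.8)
The plan is to reduce both parts to a single model map and then treat the two ranges of $\gamma$ with different elementary tools. Since $\pow{z}{\gamma}=\overline z\,|z|^{\gamma-1}=\overline{|z|^{\gamma-1}z}$ and complex conjugation is an isometry, setting $\Phi(z):=|z|^{\gamma-1}z$ (with $\Phi(0)=0$) turns both \eqref{power-difference-ineq} and \eqref{power-difference-ineq2} into the same estimates for $\abr{\Phi(u)-\Phi(v)}$. Two structural features of $\Phi$ will drive the argument: it is rotation-equivariant, $\Phi(e^{i\psi}z)=e^{i\psi}\Phi(z)$, and positively homogeneous of degree $\gamma$. Equivariance lets me multiply $u$ and $v$ by a common unit factor without altering either side, so throughout I may assume $v=s\ge 0$ and $u=re^{i\theta}$; both sides then depend on $\theta$ only through $c:=\cos\theta\in[-1,1]$, with $\Phi(u)=r^\gamma e^{i\theta}$ and $\Phi(v)=s^\gamma$.

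For part (b), where $\gamma>1$, the map $\Phi$ is in fact $C^1$ on all of $\RR^2$ (its derivative tends to $0$ at the origin because $\gamma-1>0$), so I will run the fundamental theorem of calculus along the segment $z(t)=v+t(u-v)$, $t\in[0,1]$. Writing $\Phi(z)=(z\overline z)^{(\gamma-1)/2}z$ and differentiating, with $z'=u-v$ and $\frac{d}{dt}(z\overline z)=2\Re(\overline z z')$, gives
$$
 \tfrac{d}{dt}\Phi(z(t))=|z|^{\gamma-1}z'+(\gamma-1)\,|z|^{\gamma-3}\Re(\overline z z')\,z .
$$
Bounding $\abr{\Re(\overline z z')}\le\abr{z}\abr{z'}$ collapses this to the pointwise estimate $\abr{\frac{d}{dt}\Phi(z(t))}\le\gamma\,\abr{z(t)}^{\gamma-1}\abr{u-v}$ (valid also at a crossing $z(t)=0$, since there both sides vanish). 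Integrating in $t$ and using $\abr{z(t)}\le\max(\abr u,\abr v)$ (convexity of the modulus along the segment) together with $\gamma-1>0$ yields \eqref{power-difference-ineq2} with $C=\gamma$.

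For part (a), where $0<\gamma\le 1$, no Lipschitz bound is available and the calculus argument above breaks down near the origin — this is the main obstacle. Instead I will use the normalization to pass to a one-variable inequality in $c$: squaring, the claim \eqref{power-difference-ineq} becomes
$$
 r^{2\gamma}+s^{2\gamma}-2(rs)^\gamma c\ \le\ 2^{2(1-\gamma)}\rbr{r^2+s^2-2rsc}^{\gamma}.
$$
The right-hand side is concave in $c$ (the concave power $t\mapsto t^\gamma$ composed with the affine, nonnegative map $c\mapsto r^2+s^2-2rsc$), while the left-hand side is affine in $c$; hence their difference is concave on $[-1,1]$ and attains its minimum at an endpoint. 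It therefore suffices to check $c=\pm1$. At $c=1$ the inequality reduces to the subadditivity bound $\abr{r^\gamma-s^\gamma}\le\abr{r-s}^{\gamma}$, and at $c=-1$ to $r^\gamma+s^\gamma\le 2^{1-\gamma}(r+s)^{\gamma}$; both follow from concavity of $t\mapsto t^\gamma$ on $[0,\infty)$ (the former because a concave function vanishing at $0$ is subadditive, the latter by Jensen). This produces the constant $C=2^{1-\gamma}$, which is sharp, equality occurring for antipodal real points such as $u=1$, $v=-1$.

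The only delicate points are the regularity of $\Phi$ at the origin in part (b), settled by the $C^1$ property for $\gamma>1$ (the segment meets $0$ at a single parameter value), and the degenerate case $r=s$, $c=1$ in part (a), where concavity of $c\mapsto(r^2+s^2-2rsc)^\gamma$ is still valid as a composition of a concave function with an affine one and so needs no second-derivative computation.
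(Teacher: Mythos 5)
Your proof is correct, but it follows a genuinely different route from the paper's in both parts. The common starting point is the same: both arguments use the rotation/scaling equivariance of $z\mapsto\pow{z}{\gamma}$ to reduce to a statement depending only on the two moduli and the cosine of the angle between $u$ and $v$ (the paper divides by $v$ and writes $u/v=re^{i\phi}$; you rotate so that $v=s\ge0$). For part (a), the paper estimates the squared quantity head-on: it bounds $(r^{2\gamma}+1)/(r^2+1)^{\gamma}\le 2^{1-\gamma}$ by concavity and then splits into the cases $\cos\phi\ge0$ and $\cos\phi<0$, using $x\le x^{\gamma}$ on $[0,1]$ in one case and concavity again in the other. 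You instead note that the difference of the two sides is concave in $c=\cos\theta$, so the inequality need only be verified at the endpoints $c=\pm1$, where it collapses to the classical subadditivity bound $\abr{r^{\gamma}-s^{\gamma}}\le\abr{r-s}^{\gamma}$ and the power-mean bound $r^{\gamma}+s^{\gamma}\le2^{1-\gamma}(r+s)^{\gamma}$; this avoids the case analysis and makes transparent that the constant $2^{1-\gamma}$ is forced by the antipodal configuration $u=-v$. For part (b), the paper again reduces to a scalar inequality, $r^{2\gamma}-2r^{\gamma}\lambda+1\le\gamma^{2}(r^{2}-2r\lambda+1)$, and settles it by comparing the identity $t^{2}-2t\lambda+1=(1-t)^{2}+2(1-\lambda)t$ at $t=r^{\gamma}$ and $t=r$ together with Bernoulli's inequality; you instead run the standard $C^1$/fundamental-theorem-of-calculus argument along the segment from $v$ to $u$, bounding $\abr{\tfrac{d}{dt}\Phi(z(t))}\le\gamma\abr{z(t)}^{\gamma-1}\abr{u-v}$. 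Your version requires the (correctly handled) regularity check of $\Phi$ at the origin, but in exchange it is coordinate-free and would work verbatim for the analogous power map on $\RR^n$ or a Hilbert space, whereas the paper's argument is purely algebraic and entirely elementary.
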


(This Lemma will be used in the proof of Lemma~\ref{lemma:power-contunuous}.)

\begin{proof} 
(a) Put $u/v=re^{i\phi}$. 
Due to the symmetry between $u$ and $v$ we may assume that $r\le 1$. 
The inequality \eqref{power-difference-ineq} reduces to the following:
$$
 \abr{r^\gamma e^{i\phi}-1}\le C \abr{re^{i\phi}-1}^{\gamma}.
$$
Using the Cosine Theorem  
%
%
and putting $t=2r/(r^2+1)\le 1$, we can restate the required inequality in the form 
$$
 \frac{r^{2\gamma}+1}{(r^2+1)^{\gamma}}-2^{1-\gamma} t^{\gamma}\cos\phi \le C^2 \rbr{1-t\cos\phi}^{\gamma}.
$$
By concavity, $(r^{2\gamma}+1)/2\le ((r^2+1)/2)^{\gamma}$, hence
$$
 \frac{r^{2\gamma}+1}{(r^2+1)^{\gamma}}-2^{1-\gamma} t^{\gamma}\cos\phi\le
 2^{1-\gamma}(1-t^\gamma\cos\phi). 
$$
If $\cos\phi\ge 0$, then 
$$
1-t^\gamma\cos\phi\le 1-t\cos\phi\le (1-t\cos\phi)^{\gamma},
$$
and \eqref{power-difference-ineq} holds (even with a better constant), since $2^{1-\gamma}<C^2$.

If $\cos\phi<0$, then, again due to concavity, we get 
$$
1-t^\gamma\cos\phi\le 1+|t\cos\phi|^\gamma\le 
2^{1-\gamma}(1+|t\cos\phi|)^\gamma=
2^{1-\gamma}(1-t\cos\phi)^\gamma, 
$$
and the proof of the inequality \eqref{power-difference-ineq} is complete.

\smallskip
(b) Similarly, in order to prove the inequality \eqref{power-difference-ineq2} it suffices to show that for $0\leq r\leq 1$
и $\lambda=\cos\phi\in[-1,1]$
$$
 r^{2\gamma}-2r^\gamma\lambda+1\le \gamma^2(r^2-2r\lambda+1).
$$
Comparing the right-hand sides of the identities
$t^2-2t\lambda+1=(1-t)^2+2(1-\lambda)t$ with $t=r^\gamma$ and $t=r$
and using the Bernoulli inequality $1-r^\gamma\le \gamma(1-r)$,
we get the required result. 
\end{proof} 

\subsection{The improving operator}
\label{ssec:improving}

Denote $\alpha=p'/p=p'-1$, $\beta=r'/r=r'-1$, $\tilde{h}(x)=h(-x)$. 
Transposition of the convolution operator amounts to changing the original kernel into the kernel
with tilde, i.e.
$$
\rbr{k*f,g}=\int\int k(x-y)f(y)g(x)\,dy\,dx 
=\rbr{f,\tilde{k}*g}.
$$     
Clearly,
$$
 \norm{K}_{p,r}=\sup_{\|f\|_p=1,\;\|g\|_r=1}\abr{\rbr{k*f,g}}=\sup_{\|f\|_p=1,\;\|g\|_r=1}\abr{\rbr{f,k*g}}=\norm{K}_{r,p}.
$$

Let $S_p$ be the operator of radial projection onto the unit sphere in $L_p$,
$$
 S_p f=\frac{f}{\norm{f}_p}.
$$ 
Hereinafter we assume that the function acted upon by the operator $S_p$ is nonzero. 

Suppose $k\in L_q$, $f\in L_p$, so that $k*f\in L_{r'}$
and hence $\pow{(k*f)}{\beta}\in L_r$. 
Introduce the operator $B^p_r:L_p\to L_{r}$ by the formula
 $$
 B^p_r f = S_r\rbr{\pow{(\widetilde{k*f})}{\beta}}.
 $$
 (Its domain is the set $\{f\in L_p\,|\, k*f\neq 0\}$.)
Interchanging the exponents $p$ и $r$ we have
the operator
$B^r_p:L_r\to L_{p}$. Explicitly,
 $$
 B^r_p g= S_p\rbr{\pow{(\widetilde{k*g})}{\alpha}}.
 $$
\emph{The improving operator}\ $B:\,L_p\to L_p$ is the composition
 $$
 Bf= B_p^r B_r^p f.
 $$
 
 \begin{remark}
 \label{rem:symmetricB}
In the  <<symmetric>> case $r=p$ the operator $\tilde B:\,f\mapsto B^p_p \tilde{f}$,
whose square is $B$, is already a self-map of $L_p$.
As such, it can be used for the purposes of the proof
instead of the operator $B$. With this approach,
the case $\gamma\le 1$ in Lemma~\ref{lemma:power-contunuous} is not needed;
also the proof of Lemma~\ref{lemma:Maximizer-converge} becomes one-step. One property of the operator $B$ that $\tilde B$
lacks is the analog of the necessary condition of extremum $Bf=f$ (see Proposition~\ref{propos:eq-for-maximizer} in Section~\ref{ssec:necessary-cond-extremum}). One can instead propose  that a maximizer in the case $p=r$ must satisfy the equation $\tilde Bf=T_af$ with $T_a$ a shift. We do not know whether this condition is indeed necessary.       
 \end{remark}

\begin{lemma}
\label{lemma:norm-increase}
Let $f\in L_p$, $\norm{f}_p=1$, and $\norm{k*f}_{r'}>0$. Then 
\begin{equation}
\label{eqn:norm-increase}
\norm{k*B^p_r f}_{p'}\ge \|k*f\|_{r'}.
\end{equation}
\end{lemma}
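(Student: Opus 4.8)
The plan is to identify, in advance, a single unit-norm test function in $L_p$ whose pairing with $k*B^p_r f$ already equals $\norm{k*f}_{r'}$, and then invoke the duality $\norm{w}_{p'}=\sup_{\norm{\psi}_p=1}\abr{\int w\psi}$ to conclude. The correct test function turns out to be the reflection $\widetilde f$, and the reflection built into the definition of $B^p_r$ is exactly what makes the pairing collapse to the $L_{r'}$ norm of $k*f$.

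First I would set up notation and record the elementary facts. Write $u=k*f$; by Young's inequality $u\in L_{r'}$, and $\norm{u}_{r'}=\norm{k*f}_{r'}>0$ by hypothesis. Since $\beta r=(r'-1)r=r'$ and $\abr{\widetilde u}=\abr{u(-\cdot)}$ has the same $L_{r'}$ norm as $u$, one gets $\norm{\pow{\widetilde u}{\beta}}_r=\norm{u}_{r'}^{\beta}$. Hence $g:=B^p_r f=\pow{\widetilde u}{\beta}/\norm{u}_{r'}^{\beta}$ is well defined and $\norm{g}_r=1$; moreover $k*g\in L_{p'}$ by a second application of Young's inequality (using $1/q+1/r=1+1/p'$), so that $\norm{k*g}_{p'}$ makes sense.

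Next I would evaluate the pairing $\rbr{k*g,\widetilde f}$. By the transposition identity $\rbr{k*g,\widetilde f}=\rbr{g,\widetilde k*\widetilde f}$, and since $\widetilde k*\widetilde f=\widetilde{k*f}=\widetilde u$, this equals $\rbr{g,\widetilde u}$. Substituting the formula for $g$ and using $z\pow{z}{\beta}=\abr{z}^{\beta+1}$ with $\beta+1=r'$,
\[
\rbr{g,\widetilde u}=\norm{u}_{r'}^{-\beta}\int \pow{\widetilde u}{\beta}\,\widetilde u=\norm{u}_{r'}^{-\beta}\int\abr{\widetilde u}^{r'}=\norm{u}_{r'}^{\,r'-\beta}=\norm{u}_{r'},
\]
since $r'-\beta=1$. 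Finally, as $\norm{\widetilde f}_p=\norm{f}_p=1$, the duality bound gives $\norm{k*g}_{p'}\ge\abr{\rbr{k*g,\widetilde f}}=\norm{u}_{r'}=\norm{k*f}_{r'}$, which is precisely \eqref{eqn:norm-increase}.

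I do not expect a genuine analytic obstacle here: the argument is a one-line duality estimate once the reflected test function $\widetilde f$ is recognized. The only points demanding care are the bookkeeping of reflections — in particular verifying $\widetilde k*\widetilde f=\widetilde{k*f}$ so that transposition produces $\widetilde u$ rather than some other expression — and confirming the exponent identities $\beta r=r'$, $\beta+1=r'$, $r'-\beta=1$, together with the two Young ranges $u\in L_{r'}$ and $k*g\in L_{p'}$, all of which follow directly from $\beta=r'-1$ and the relation \eqref{pqr}.
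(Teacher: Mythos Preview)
Your proof is correct and is essentially the same as the paper's: both choose the reflected function $\widetilde f$ as the test function in the duality $\norm{w}_{p'}=\sup_{\norm{\psi}_p=1}\abr{(w,\psi)}$, transpose the convolution to obtain the pairing with $\widetilde{k*f}$, and then use $z\,\pow{z}{\beta}=|z|^{\beta+1}=|z|^{r'}$ to collapse the result to $\norm{k*f}_{r'}$. The only cosmetic difference is that you carry the normalization $S_r$ through the computation, whereas the paper first clears it and proves the equivalent unnormalized inequality $\norm{k*\pow{(\widetilde{k*f})}{\beta}}_{p'}\ge\norm{k*f}_{r'}^{r'}$.
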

\begin{proof}
Using the definition of the operator 
 $B^p_r$, we rewrite the inequality \eqref{eqn:norm-increase} to be proved in the form
$$
\norm{k*\pow{(\widetilde{k*f})}{\beta}}_{p'} \ge \norm{\pow{(\widetilde{k*f})}{\beta}}_{r}\, \norm{k*f}_{r'}=\norm{k*f}_{r'}^{r'}.
$$
(The identities $\norm{\pow{h}{\beta}}_{r}=\rbr{\int|h|^{\beta r}}^{1/r}=\norm{h}_{r'}^{\beta}$ and $\beta+1=r'$
are used.)

Since 
$\|\tilde{f}\|_p=\|f\|_p=1$, the left-hand side 
is estimated as
$$
\norm{k*\pow{(\widetilde{k*f})}{\beta}}_{p'}\ge 
\abr{\rbr{k*\pow{(\widetilde{k*f})}{\beta},\tilde{f}}}=
\abr{\rbr{\widetilde{k*f},\pow{\widetilde{k*f}}{\beta}}}
=\int|k*f|^{\beta+1}.
$$
The lemma is proved.
\end{proof}


\begin{onecorollary}
\label{cor:B_preserves_maximization} 
If $0<\eps<1$ and the function $f$ is an $\eps$-maximizer for the convolution operator $K:\,L_p\to L_{r'}$, then it belongs to the domain of $B$, and $Bf$ is also an $\eps$-maximizer for the operator $K$.
\end{onecorollary}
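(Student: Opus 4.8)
The plan is to apply Lemma~\ref{lemma:norm-increase} twice---once to each of the two factors $B^p_r$ and $B^r_p$ whose composition is $B$---and to chain the resulting estimates. I first dispose of the degenerate case: if $k=0$ then $\norm{K}_{p,r}=0$, every unit function is a maximizer, and there is nothing to improve; so I assume $\norm{K}_{p,r}>0$. Then, since $0<\eps<1$, the $\eps$-maximizer $f$ satisfies $\norm{k*f}_{r'}\ge\norm{K}_{p,r}(1-\eps)>0$, whence $k*f\ne 0$ and $f$ lies in the domain of $B^p_r$.

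Writing $g=B^p_r f$, the normalization by $S_r$ gives $\norm{g}_r=1$, and Lemma~\ref{lemma:norm-increase} yields $\norm{k*g}_{p'}\ge\norm{k*f}_{r'}>0$. In particular $k*g\ne 0$, so $g$ lies in the domain of $B^r_p$ and $Bf=B^r_p g$ is well defined; this establishes that $f$ belongs to the domain of $B$, and $\norm{Bf}_p=1$ since $B^r_p$ ends with the radial projection $S_p$ onto the unit sphere.

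For the maximization property I would invoke the companion of Lemma~\ref{lemma:norm-increase} obtained by interchanging $p$ and $r$. Since the relation \eqref{pqr} is symmetric in $p,q,r$, the convolution maps $L_r$ into $L_{p'}$ exactly as it maps $L_p$ into $L_{r'}$, and the proof of the lemma transfers verbatim with the roles of $\alpha$ and $\beta$ exchanged. Applied to $g$ (which satisfies $\norm{g}_r=1$ and $\norm{k*g}_{p'}>0$), it gives $\norm{k*B^r_p g}_{r'}\ge\norm{k*g}_{p'}$. Chaining,
\[
 \norm{k*Bf}_{r'}=\norm{k*B^r_p g}_{r'}\ge\norm{k*g}_{p'}\ge\norm{k*f}_{r'}\ge\norm{K}_{p,r}(1-\eps),
\]
so $Bf$ is again an $\eps$-maximizer.

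I do not expect a genuine obstacle once Lemma~\ref{lemma:norm-increase} is available; the whole argument is the two-fold application and the telescoping inequality. The two points that demand care are the domain bookkeeping---checking that the intermediate function $g=B^p_r f$ does not convolve to zero, which is precisely the strict positivity supplied by Lemma~\ref{lemma:norm-increase}---and the observation that the lemma has a symmetric twin under $p\leftrightarrow r$, which removes any need for a separate argument for the second factor $B^r_p$.
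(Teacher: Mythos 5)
Your proof is correct and follows essentially the same route as the paper's: establish $\norm{k*f}_{r'}>0$ to enter the domain of $B^p_r$, apply Lemma~\ref{lemma:norm-increase} to get $\norm{k*g}_{p'}\ge\norm{k*f}_{r'}>0$ for $g=B^p_rf$, then apply the same lemma with $p$ and $r$ swapped and chain the inequalities. The explicit dismissal of the degenerate case $k=0$ (left implicit in the paper) and the remark justifying the $p\leftrightarrow r$ symmetry are harmless additions that do not change the argument.
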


\begin{proof}
We have $\norm{k*f}_{r'}\ge\|K\|_{p,r}(1-\eps)>0$, hence the function $g=B^p_r f\in L_r$ is defined and 
$\norm{g}_r=1$. According to \eqref{eqn:norm-increase},
$$
\norm{k*g}_{p'}\ge  \norm{k*f}_{r'}>0.
$$
Therefore the function $h=B^r_p g=Bf$, $\norm{h}_p=1$ is defined and again, according to \eqref{eqn:norm-increase} with $p$ и $r$ swapped,
$$
\norm{k*h}_{r'}\ge \norm{k*g}_{p'},
$$
whence $\norm{K(Bf)}_{r'}\ge \norm{Kf}_{r'}$, as required.
\end{proof}

\begin{lemma}
\label{lemma:B-translation-invariant}
The operator $B$ commutes with shifts: $B(T_a f)=T_a(Bf)$ for any $a\in\RR^d$. (If one side of the formula is defined, then the other is defined too.) 
\end{lemma}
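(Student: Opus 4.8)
The plan is to track how each elementary constituent of the operator $B$ interacts with the shift $T_a$ and then to compose. Recall that $B=B^r_p B^p_r$ is assembled from four basic operations: convolution with $k$, the reflection $h\mapsto\tilde h$, the pointwise power map $h\mapsto\pow{h}{\gamma}$, and the radial projection $S_m$. First I would record the shift behavior of each one separately. Convolution commutes with shifts outright, $k*(T_a f)=T_a(k*f)$, because the convolution integral is translation invariant. The pointwise power $\pow{\cdot}{\gamma}$ commutes with $T_a$ since it acts pointwise in its argument, and the normalization $S_m$ commutes with $T_a$ because $\norm{T_a h}_m=\norm{h}_m$. The single operation that does \emph{not} simply commute is reflection: from $\widetilde{T_a h}(x)=h(-x-a)$ one reads off $\widetilde{T_a h}=T_{-a}\tilde h$, so reflection converts a shift by $a$ into a shift by $-a$.

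Next I would propagate a shift through the half-operator $B^p_r f=S_r\rbr{\pow{(\widetilde{k*f})}{\beta}}$. Applying the four rules in turn --- convolution (shift unchanged), reflection (shift flips to $-a$), power (unchanged), normalization (unchanged) --- gives $B^p_r(T_a f)=T_{-a}(B^p_r f)$. The identical computation with $(p,r,\beta)$ replaced by $(r,p,\alpha)$ yields $B^r_p(T_{-a}g)=T_a(B^r_p g)$ for $g\in L_r$. Thus each half anti-commutes with shifts, turning $T_a$ into $T_{-a}$.

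Composing, I would set $g=B^p_r f$ and compute $B(T_a f)=B^r_p\rbr{B^p_r(T_a f)}=B^r_p(T_{-a}g)=T_a(B^r_p g)=T_a(Bf)$: the two sign reversals introduced by the two reflections cancel and restore the original shift. The domain assertion follows immediately, since $k*(T_a f)=T_a(k*f)$ vanishes if and only if $k*f$ does, so $T_a f$ lies in the domain of $B$ exactly when $f$ does. There is no genuine obstacle here; the only point that requires care is the sign bookkeeping produced by the reflection, and the whole content of the lemma is precisely that the two reflections undo each other's sign flip.
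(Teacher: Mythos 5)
Your proof is correct and follows essentially the same route as the paper's: both rest on the identity $\widetilde{T_a h}=T_{-a}\tilde h$, deduce that each half-operator $B^p_r$, $B^r_p$ converts $T_a$ into $T_{-a}$, and observe that the two sign flips cancel under composition. Your version merely spells out the commutation of the remaining constituents (convolution, power map, normalization) that the paper leaves implicit.
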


\begin{proof} 
 We have $\widetilde{T_a f}=T_{-a}\tilde{f}$, therefore, $T_{-a}(B^p_r f)=B^p_r(T_{a} f)$, and similarly for $B^r_p$.
 The claimed equality follows.
\end{proof}

\begin{lemma}
\label{lemma:power-contunuous} 
Let $s>1$ and $\gamma s>1$. Then the map $Q:\,f\mapsto \pow{f}{\gamma}$ from $L_{\gamma s}$ to  $L_s$ is continuous.
\end{lemma}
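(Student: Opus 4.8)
The goal is to show that the map $Q:\, f\mapsto \pow{f}{\gamma}=\overline{f}|f|^{\gamma-1}$ is continuous from $L_{\gamma s}$ to $L_s$, under the hypotheses $s>1$ and $\gamma s>1$. The plan is to split the analysis according to whether $\gamma\le 1$ or $\gamma>1$, exactly matching the two cases of Lemma~\ref{lemma:aux-ineq}, which provide pointwise bounds on $\abr{\pow{u}{\gamma}-\pow{v}{\gamma}}$ and are precisely tailored for this purpose.

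\medskip
\textbf{Case $0<\gamma\le 1$.} Here the situation is the simplest. Applying the pointwise inequality \eqref{power-difference-ineq} with $u=f(x)$, $v=g(x)$ gives $\abr{Qf(x)-Qg(x)}\le 2^{1-\gamma}\abr{f(x)-g(x)}^\gamma$ for a.e.\ $x$. Raising to the power $s$ and integrating, I would get
$$
 \norm{Qf-Qg}_s^s\le 2^{(1-\gamma)s}\int \abr{f-g}^{\gamma s}=2^{(1-\gamma)s}\norm{f-g}_{\gamma s}^{\gamma s},
$$
so $\norm{Qf-Qg}_s\le 2^{1-\gamma}\norm{f-g}_{\gamma s}^{\gamma}$. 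This shows $Q$ is (uniformly, even Hölder-) continuous with exponent $\gamma$; note this is where $\gamma s>1$ is used only to guarantee $\gamma s\ge 1$ so that $\norm{\cdot}_{\gamma s}$ is a genuine norm.

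\medskip
\textbf{Case $\gamma>1$.} Now I would use \eqref{power-difference-ineq2}: pointwise,
$$
 \abr{Qf(x)-Qg(x)}\le \gamma\,\abr{f(x)-g(x)}\,\big(\max(\abr{f(x)},\abr{g(x)})\big)^{\gamma-1}.
$$
Raising to the power $s$, integrating, and applying Hölder's inequality with conjugate exponents $\gamma$ and $\gamma'=\gamma/(\gamma-1)$ to the product $\abr{f-g}^s\cdot(\max(\abr f,\abr g))^{(\gamma-1)s}$, I obtain
$$
 \norm{Qf-Qg}_s^s\le \gamma^s\norm{f-g}_{\gamma s}^{s}\,\big\|\max(\abr f,\abr g)^{(\gamma-1)s}\big\|_{\gamma'}.
$$
Since $(\gamma-1)s\cdot\gamma'=\gamma s$, the last factor equals $\big(\int \max(\abr f,\abr g)^{\gamma s}\big)^{1/\gamma'}$, which is bounded by $\big(\norm{f}_{\gamma s}^{\gamma s}+\norm{g}_{\gamma s}^{\gamma s}\big)^{1/\gamma'}$. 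This yields an estimate of the form $\norm{Qf-Qg}_s\le \gamma\,\norm{f-g}_{\gamma s}\,\big(\norm f_{\gamma s}^{\gamma s}+\norm g_{\gamma s}^{\gamma s}\big)^{(\gamma-1)/(\gamma s)}$. As $g\to f$ in $L_{\gamma s}$, the norms $\norm g_{\gamma s}$ stay bounded while $\norm{f-g}_{\gamma s}\to 0$, giving continuity at each fixed $f$. Here $s>1$ ensures $\gamma'<\infty$ so Hölder applies with finite exponents.

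\medskip
\textbf{Main obstacle.} The analytic content is entirely in Lemma~\ref{lemma:aux-ineq}, so the hard part is simply arranging the Hölder exponents correctly in case $\gamma>1$ and verifying that $(\gamma-1)s\gamma'=\gamma s$ — this bookkeeping (together with confirming the hypotheses $s>1$, $\gamma s>1$ are exactly what make the exponents admissible) is the one place to be careful. The continuity in this case is only local rather than uniform, but local continuity is all the statement asserts.
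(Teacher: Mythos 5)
Your proof is correct and takes essentially the same route as the paper: both cases rest on the pointwise bounds of Lemma~\ref{lemma:aux-ineq}, and for $\gamma>1$ the paper applies H\"older with the same conjugate exponents $\gamma,\gamma'$, merely majorizing $\max(|f|,|g|)$ by $|f|+|g|$ where you keep the maximum (a purely cosmetic difference in the constant). One small aside: the finiteness of $\gamma'$ in the second case comes from $\gamma>1$, not from $s>1$.
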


\begin{proof}
Consider two cases.

1. In the case $\gamma\le 1$ the continuity of $Q$ easily follows from the numerical inequality \eqref{power-difference-ineq}:
$$
 \norm{Qf-Qg}_s^s= \int \abr{\pow{f}{\gamma}-\pow{g}{\gamma}}^s\le C^s\int\abr{f-g}^{\gamma s}=C^s\norm{f-g}_{\gamma s}^{\gamma s}.
$$

2.  In the case $\gamma>1$ we use the numerical inequality
\eqref{power-difference-ineq2} and H\"older's inequality and find
\begin{multline*}
\norm{Qf-Qg}_{s}^s\le 
C^s\int\abr{f-g}^s\,\rbr{|f|+|g|}^{(\gamma-1)s} \le
 \\
\le C^s\rbr{\int\abr{f-g}^{\gamma s}}^{1/\gamma}\,
 \rbr{\int\rbr{|f|+|g|}^{\gamma'(\gamma-1) s}}^{1/\gamma'}.
\end{multline*}
Since $\gamma'(\gamma-1)=\gamma$ and $(|f|+|g|)^{\gamma s}\le 2^{\gamma s-1}(|f|^{\gamma s}+|g|^{\gamma s})$ (by concavity),
we get
$$
\norm{Qf-Qg}_{s}^s
\le 2^{\gamma s-1} C^s\,\norm{f-g}_{\gamma s}^s\,\rbr{\norm{f}_{\gamma s}^{\gamma s/\gamma'}+\norm{g}_{\gamma s}^{\gamma s/\gamma'}}.
$$
This concludes the proof.
\end{proof}

\begin{onecorollary}
\label{cor:Bcontinuous}
The operators $B^p_r$, $B^r_p$ and $B$ are continuous on their domains with respect to the norm topologies in the preimage and image spaces.
\end{onecorollary}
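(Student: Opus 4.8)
The plan is to present each of $B^p_r$, $B^r_p$ and $B$ as a composition of maps, each continuous on the relevant domain, and then invoke the fact that a composition of continuous maps is continuous. Recall that $B^p_r f = S_r\rbr{\pow{(\widetilde{k*f})}{\beta}}$ with $\beta=r'-1$. I would factor this map as the chain: (i) the convolution $f\mapsto k*f$, which is a bounded linear operator $L_p\to L_{r'}$ by Young's inequality, hence continuous; (ii) the reflection $h\mapsto\tilde h$, an isometry of $L_{r'}$ and thus continuous; (iii) the power map $Q:g\mapsto\pow{g}{\beta}$; and (iv) the radial projection $S_r$ onto the unit sphere of $L_r$.

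For step (iii) I would apply Lemma~\ref{lemma:power-contunuous} with $s=r$ and $\gamma=\beta$. A short computation gives $\gamma s=\beta r=r'$, so $Q$ maps $L_{r'}$ continuously into $L_r$; the hypotheses $s=r>1$ and $\gamma s=r'>1$ hold because $1<r<\infty$. For step (iv) I would observe that $S_r$ is continuous at every nonzero point: if $g_n\to g\neq 0$ in $L_r$, then $\norm{g_n}_r\to\norm{g}_r\neq 0$ by continuity of the norm, whence $g_n/\norm{g_n}_r\to g/\norm{g}_r$. On the domain of $B^p_r$, where $k*f\neq 0$ and therefore $\pow{(\widetilde{k*f})}{\beta}\neq 0$, the argument of $S_r$ is nonzero, so (iv) applies. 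Being a composition of continuous maps, $B^p_r$ is continuous.

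The continuity of $B^r_p$ then follows by the identical argument with $p$ and $r$ interchanged; here one uses Lemma~\ref{lemma:power-contunuous} with $s=p$ and $\gamma=\alpha=p'-1$, for which $\gamma s=p'>1$. Finally $B=B^r_p\circ B^p_r$ is a composition of the two continuous maps just treated, hence continuous on its domain, namely those $f$ for which both $k*f\neq 0$ and $k*B^p_r f\neq 0$.

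I expect no serious obstacle in this corollary; the only points requiring care are the elementary verification that $S_r$ (and $S_p$) is continuous away from the origin, and the bookkeeping of exponents needed to legitimately invoke Lemma~\ref{lemma:power-contunuous}. The domain restrictions are precisely what guarantee that each map in the chain is applied where it is defined and continuous.
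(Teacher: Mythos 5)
Your proof is correct and follows the same route as the paper: both present each operator as a composition of continuous maps and invoke Lemma~\ref{lemma:power-contunuous} for the power map, the paper merely leaving the exponent bookkeeping and the continuity of convolution, reflection, and $S_r$ away from the origin implicit. Your explicit verification that $\gamma s=\beta r=r'$ (resp.\ $\alpha p=p'$) matches the hypotheses of that lemma is exactly the check the paper's one-line proof relies on.
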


\begin{proof}
Each of these operators is a composition of continuous maps; Lemma~\ref{lemma:power-contunuous} provides the continuity in
the only place where it is not a commonly known fact. 
\end{proof}

\subsection{A compactness lemma}
\label{ssec:compact}

\begin{lemma}
\label{lemma:converge-convolutions}
Let $k\in L_q(\RR^d)$ and $\chi\in L_{r'}\cap L_{\infty} (\RR^d)$. Then the integral operator with kernel $\chi(x)k(x-y)$,
$$
  \chi K:\; f(x)\mapsto \chi(x)(k*f)(x),
$$	 
maps any weakly convergent sequence
$f_n\in L_p$ to a sequence convergent in $L_{r'}$ norm. 
\end{lemma}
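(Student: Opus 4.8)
The plan is to show that $\chi K$ is a \emph{compact} operator from $L_p$ to $L_{r'}$. Since $1<p<\infty$, the space $L_p$ is reflexive, and a compact operator between reflexive Banach spaces sends every weakly convergent sequence to a norm convergent one: if $f_n\weakto f$ in $L_p$, then $(f_n)$ is bounded, so $(\chi K f_n)$ is relatively compact; every subsequence has a further subsequence converging in $L_{r'}$ norm, and the limit must equal $\chi K f$ because a bounded operator is weak-to-weak continuous. Hence the whole sequence $(\chi K f_n)$ converges to $\chi K f$ in norm, which is precisely the assertion. So it remains to prove compactness.

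First I would reduce to smooth kernels. The set $C_c(\RR^d)$ is dense in $L_q$, so choose $k_j\to k$ in $L_q$. Using $\chi\in L_\infty$ together with Young's inequality,
$$
 \norm{\chi\,(K-K_{k_j})f}_{r'}\le \norm{\chi}_\infty\,\norm{(k-k_j)*f}_{r'}\le \norm{\chi}_\infty\,\norm{k-k_j}_q\,\norm{f}_p,
$$
so $\chi K_{k_j}\to\chi K$ in the operator norm $\norm{\cdot}_{p\to r'}$. Since the compact operators form a closed subspace (in operator norm) of the bounded operators, it suffices to prove that $\chi K_{k_0}$ is compact for an arbitrary fixed $k_0\in C_c(\RR^d)$.

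For such $k_0$ I would study the family $\fbr{k_0*f\,|\,\norm{f}_p\le 1}$. Because $k_0\in L_{p'}$, each $k_0*f$ is bounded, $\abr{(k_0*f)(x)}\le\norm{k_0}_{p'}$, and the family is equicontinuous: $\abr{(k_0*f)(x)-(k_0*f)(x')}\le\norm{k_0-T_{x'-x}k_0}_{p'}\,\norm{f}_p$, which tends to $0$ as $x'\to x$ uniformly in $f$ by continuity of translation in $L_{p'}$ (valid since $p'<\infty$). Arzel\`a--Ascoli then yields that the restrictions of this family to any ball $B_R$ are relatively compact in the uniform norm on $B_R$.

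Finally I would transfer this to $L_{r'}$ by exploiting $\chi\in L_{r'}$. There are two ingredients. \emph{Tightness:} $\norm{\chi\,(k_0*f)}_{L_{r'}(\abr{x}>R)}\le\norm{k_0}_{p'}\,\norm{\chi}_{L_{r'}(\abr{x}>R)}\to 0$ as $R\to\infty$, uniformly in $f$. \emph{Local compactness:} on $B_R$, uniform convergence $h_n\to h$ gives $\norm{\chi\,(h_n-h)}_{L_{r'}(B_R)}\le\norm{\chi}_{r'}\,\norm{h_n-h}_{L_\infty(B_R)}\to 0$, so multiplication by $\chi$ carries a set relatively compact in $C(B_R)$ to one relatively compact in $L_{r'}(B_R)$. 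Gluing these by a diagonal argument shows that $\fbr{\chi\,(k_0*f)}$ is relatively compact in $L_{r'}(\RR^d)$, i.e.\ $\chi K_{k_0}$ is compact, completing the proof. The main obstacle is exactly this last gluing step: the convolution $k_0*f$ has non-compact support, so compactness in $L_{r'}$ cannot come from Arzel\`a--Ascoli alone and requires the decay supplied by $\chi\in L_{r'}$; note that both hypotheses on $\chi$ are genuinely used, $\chi\in L_\infty$ in the kernel-approximation step and $\chi\in L_{r'}$ in the tightness step.
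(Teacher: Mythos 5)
Your proof is correct, but it takes a genuinely different route from the paper. You establish the stronger statement that $\chi K$ is a \emph{compact} operator $L_p\to L_{r'}$, by approximating $k$ in $L_q$ with kernels $k_0\in C_c(\RR^d)$ (using $\chi\in L_\infty$ and Young's inequality to get operator-norm convergence), then applying Arzel\`a--Ascoli to the uniformly bounded, uniformly equicontinuous family $\{k_0*f:\norm{f}_p\le 1\}$, and finally combining local compactness with the uniform tail decay supplied by $\chi\in L_{r'}$ into a total-boundedness argument. The paper instead proves the sequential statement directly: for $k\in L_q\cap L_\infty$ one has $k\in L_{p'}$, so $k*f_n$ converges pointwise for a weakly convergent $(f_n)$ and is uniformly bounded, whence $\norm{k}_{p'}\abr{\chi(x)}$ is an $L_{r'}$ majorant and dominated convergence finishes; the general $k$ is handled by the ``vertical'' truncation $k_\lambda=k\,I_{|k|\le\lambda}$ and an $\eps/3$ argument. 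The paper's proof is shorter and avoids equicontinuity and total-boundedness machinery; yours makes the compactness of $\chi K$ explicit (which, since $L_p$ is reflexive for $p>1$, is equivalent to the complete continuity asserted in the lemma, and is the viewpoint the paper itself alludes to in its comment on this lemma). One cosmetic remark: reflexivity of $L_p$ is not actually needed for the implication you use --- a compact operator between arbitrary Banach spaces maps weakly convergent sequences to norm-convergent ones by exactly the subsequence argument you give; reflexivity only matters for the converse implication.
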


\begin{proof} 
Without loss of generality we may assume that $\norm{f_n}_p\le 1$ for all $n$. Let $f_n\weakto f$ in $L_p$; then $\norm{f}_p\le 1$.

Consider first the case $k\in  L_q\cap L_{\infty}$. Since $q<p'<\infty$, we have $k\in L_{p'}$, hence the sequence $k*f_n$ converges pointwise. 
Besides, $\norm{k*f_n}_\infty\le \norm{k}_{p'}\norm{f_n}_p \le \norm{k}_{p'}$, therefore
$$
|\chi(x)\cdot(k*f_n)(x)|\le \norm{k}_{p'} |\chi(x)|.
$$
The majorant in the right-hand side lies in
$L_{r'}$. By the Dominated Convergence Theorem we conclude that $\|f_n-f\|_{r'}\to 0$. 

\smallskip
Now let us withdraw the assumption $k\in L_{\infty}$. Let $K_\lambda$ be the operator of convolution with truncated function 
$k_\lambda(x)=k(x) I_{|k(x)|\le \lambda}(x)\in L_q\cap L_{\infty}$. 
As follows from the previous, $\norm{\chi K_\lambda(f_n-f)}_{r'}\to 0$.
The proof is finished by use of the $\eps/3$ trick.
Given $\eps>0$ we find $\lambda$ such that $\norm{\chi}_\infty \norm{k-k_\lambda}_q<\eps/3$. Let $n_0$ be such that
$\norm{\chi K_\lambda(f_n-f)}_{r'}<\eps/3 $ when $n\ge n_0$. Then for $n\ge n_0$ we have
$$
\norm{\chi K(f_n-f)}_{r'}\le \norm{\chi K_\lambda(f_n-f)}_{r'} +\norm{\chi (k_\lambda-k)*f_n}_{r'} +\norm{\chi (k_\lambda-k)*f}_{r'}
<\eps.
$$
(The $2$nd and $3$rd terms in the middle are estimated by Young's inequality.) The proof is complete.%
\eref{com:compactness-lemma} 
\end{proof}

\begin{onecorollary}
\label{cor:compactness}
Let $k\in L_q(\RR^d)$ and $\Omega\subset \RR^d$ be a set of finite measure. If the sequence $(f_n)$ is weakly convergent in $L_p(\RR^d)$, then the sequence of convolution restrictions $\left.(k*f_n)\right|_{\Omega}$ strongly converges in $L_{r'}(\Omega)$. 
\end{onecorollary}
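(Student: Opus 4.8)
The plan is to obtain this corollary as an immediate specialization of Lemma~\ref{lemma:converge-convolutions}, taking the weight $\chi$ to be the indicator function $\indf{\Omega}$ of the set $\Omega$. The entire analytic content is already carried by the lemma; the only thing that needs checking is that $\indf{\Omega}$ has the integrability demanded there, and this is precisely where the finite-measure hypothesis on $\Omega$ enters.

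First I would verify that $\chi=\indf{\Omega}$ satisfies the hypothesis $\chi\in L_{r'}\cap L_{\infty}$. Since $0\le\indf{\Omega}\le 1$ pointwise, we have $\chi\in L_\infty$ with $\norm{\chi}_\infty\le 1$. Since $\Omega$ has finite measure, $\norm{\chi}_{r'}^{r'}=\int\indf{\Omega}=|\Omega|<\infty$, so $\chi\in L_{r'}$ as well. Thus the lemma applies to this $\chi$.

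Next I would identify the map $\chi K$ with the operation of restricting to $\Omega$ and extending by zero. For any $g\in L_p$ the function $\chi Kg=\indf{\Omega}\cdot(k*g)$ agrees with $k*g$ on $\Omega$ and vanishes off $\Omega$, so its $L_{r'}(\RR^d)$ norm equals the $L_{r'}(\Omega)$ norm of the restriction $(k*g)|_{\Omega}$. Because $\chi K$ is linear, applying this identity to $g=f_n-f_m$ gives $\norm{\chi Kf_n-\chi Kf_m}_{L_{r'}(\RR^d)}=\norm{(k*f_n)|_{\Omega}-(k*f_m)|_{\Omega}}_{L_{r'}(\Omega)}$. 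Lemma~\ref{lemma:converge-convolutions} asserts that $(\chi Kf_n)$ converges in $L_{r'}(\RR^d)$, hence is Cauchy; by the displayed norm identity the restrictions $(k*f_n)|_{\Omega}$ are then Cauchy in the complete space $L_{r'}(\Omega)$ and therefore converge there, which is exactly the assertion.

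I expect no genuine obstacle: the corollary is a one-line reduction to the lemma, the only observation being that a set of finite measure has an indicator lying in $L_{r'}\cap L_\infty$, so the weighted operator $\chi K$ becomes the restriction operator.
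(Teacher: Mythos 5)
Your proposal is correct and is exactly the intended derivation: the paper states the corollary without further proof precisely because it follows from Lemma~\ref{lemma:converge-convolutions} by taking $\chi=\indf{\Omega}$, with the finite measure of $\Omega$ guaranteeing $\indf{\Omega}\in L_{r'}\cap L_\infty$. The only (harmless) detour is your Cauchy-sequence step at the end; the lemma already delivers norm convergence of $\indf{\Omega}\cdot(k*f_n)$ directly, which is the same as convergence of the restrictions in $L_{r'}(\Omega)$.
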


\subsection{Special maximizers and strong convergence on sets of finite measure} 
\label{ssec:weak-to-strong-convergence}

\begin{lemma}
\label{lemma:Bconverge}
Let $k\in L_q(\RR^d)$ and a weakly convergent sequence $f_n\weakto f$ in $L_p$ be given. 
Put $g_n=B^p_r f_n$ and $g=m^{-\beta} \pow{(\widetilde{k*f})}{\beta}$. 
If $\norm{k*f_n}_{r'}\to m>0$ for $n\to\infty$, then for any set $\Omega\subset \RR^d$ of finite measure 
the sequence $(g_n)$ restricted onto $\Omega$ converges in $L_r(\Omega)$ norm to $g$, 
	$$
	  \norm{g_n-g}_{L_r(\Omega)}\to 0. 
	$$ 
	Also the weak convergence $g_n \weakto g$ 
	holds in $L_r(\RR^d)$.
\end{lemma}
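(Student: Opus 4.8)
The plan is to reduce the claim to the continuity of the power map $Q:h\mapsto\pow{h}{\beta}$ (Lemma~\ref{lemma:power-contunuous}) combined with the compactness Corollary~\ref{cor:compactness}, treating the normalizing scalar separately at the end. Write $h_n=\widetilde{k*f_n}$ and $h=\widetilde{k*f}$, so that $g_n=\pow{h_n}{\beta}/\norm{h_n}_{r'}^{\beta}$ and $g=m^{-\beta}\pow{h}{\beta}$; here I use the identity $\norm{\pow{h_n}{\beta}}_r=\norm{h_n}_{r'}^{\beta}$ (valid since $\beta r=r'$) and the hypothesis $\norm{h_n}_{r'}=\norm{k*f_n}_{r'}\to m>0$. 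Note also that $g\in L_r$, since $\norm{\pow{h}{\beta}}_r=\norm{k*f}_{r'}^{\beta}<\infty$. First I would record the local strong convergence $h_n\to h$: the convolution is bounded from $L_p$ to $L_{r'}$, hence weak-to-weak continuous, so $k*f_n\weakto k*f$; Corollary~\ref{cor:compactness} then gives $(k*f_n)|_\Omega\to(k*f)|_\Omega$ in $L_{r'}(\Omega)$ for every $\Omega$ of finite measure, and reflection (an isometry of $L_{r'}$ that preserves finite measure) transfers this to $h_n\to h$ in $L_{r'}(\Omega)$.

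Next I would apply the power map on $\Omega$. With $\gamma=\beta$ and $s=r$ the hypotheses of Lemma~\ref{lemma:power-contunuous} hold ($r>1$ and $\beta r=r'>1$), and since its proof rests only on the pointwise inequalities of Lemma~\ref{lemma:aux-ineq} and H\"older's inequality, it applies verbatim on the finite-measure space $\Omega$. Hence $\pow{h_n}{\beta}\to\pow{h}{\beta}$ in $L_r(\Omega)$. To reinstate the normalization I would split
$$
 g_n-g=\norm{h_n}_{r'}^{-\beta}\rbr{\pow{h_n}{\beta}-\pow{h}{\beta}}+\rbr{\norm{h_n}_{r'}^{-\beta}-m^{-\beta}}\pow{h}{\beta}.
$$
On $\Omega$ the first term tends to $0$ because $\norm{h_n}_{r'}^{-\beta}$ is bounded and $\pow{h_n}{\beta}-\pow{h}{\beta}\to0$ in $L_r(\Omega)$, while the second tends to $0$ because $\norm{h_n}_{r'}^{-\beta}\to m^{-\beta}$ and $\pow{h}{\beta}|_\Omega\in L_r(\Omega)$ is fixed. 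This establishes $\norm{g_n-g}_{L_r(\Omega)}\to0$.

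For the weak convergence I would use that $\norm{g_n}_r=1$, so $(g_n)$ is bounded in the reflexive space $L_r$. The local strong convergence gives $\int g_n\overline{\ph}\to\int g\overline{\ph}$ for every $\ph\in L_{r'}$ with bounded support, a dense subset of $L_{r'}$. A standard subsequence argument then finishes the proof: any weakly convergent subsequence of $(g_n)$ must, by this density, have weak limit $g$, and a bounded sequence in a reflexive space whose only possible weak subsequential limit is $g$ converges weakly to $g$.

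I expect the only delicate point to be that Lemma~\ref{lemma:power-contunuous} is a \emph{global} continuity statement whereas the convergence of $h_n$ is only local; the resolution is to observe that the underlying estimate is pointwise/H\"older in nature and therefore holds on any set of finite measure, and that the normalizing constants, being bounded away from $0$, do not spoil the nonlinear bound. The reflection bookkeeping and the case split $\beta\le1$ versus $\beta>1$ in Lemma~\ref{lemma:power-contunuous} are routine.
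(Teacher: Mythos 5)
Your proposal is correct and follows essentially the same route as the paper's proof: Corollary~\ref{cor:compactness} for local strong convergence of the convolutions, Lemma~\ref{lemma:power-contunuous} applied on the finite-measure set, and an upgrade from local strong convergence plus the uniform bound $\norm{g_n}_r=1$ to weak convergence in $L_r(\RR^d)$. The only cosmetic differences are that the paper normalizes by $m_n$ \emph{before} applying the power map (avoiding your two-term splitting) and proves the weak convergence by a direct $\eps$-estimate against a truncated test function rather than by a reflexivity/subsequence argument.
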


\begin{proof}
1. Put $h_n=k*f_n$, $h=k*f$, and  $m_n=\norm{h_n}_{r'}$,
 Then $g_n=\pow{(\tilde{h_n}/m_n)}{\beta}$. 
By Corollary~\ref{cor:compactness}, $h_n\to h$ in $L_{r'}(\Omega)$. Since $1/m_n\to 1/m$, it follows that 
$h_n/m_n\to h/m$ in $L_{r'}(\Omega)$. The tilde operation commutes with passing to the limit. 
Applying Lemma~\ref{lemma:power-contunuous} we get $\pow{g_n}{\beta}\to\pow{g}{\beta}$ в $L_{r}(\Omega)$.

\smallskip
2. Let us now prove that $(g_n-g,\psi)\to 0$ for any  $\psi\in L_{r'}$. 
Suppose $\eps>0$ is given. Fix a set $\Omega$ of fonite measure and such that $\norm{\psi}_{L_{r'}(\RR^d\setminus\Omega)}\le \eps$.
By part~1, there exists $n_0$ such that $\norm{g_n-g}_{L_r(\Omega)}\le \eps$ for $n\ge n_0$. Then for $n\ge n_0$
we have
$$
\abr{(g_n-g,\psi)}\le \eps\norm{g_n-g}_{L_{r}(\RR^d\setminus\Omega)}+\eps\norm{\psi}_{L_{r'}(\Omega)}\le \eps(1+\norm{g}_r+\norm{\psi}_{r'}). 
$$
It is clear now that $\lim_{n\to\infty} (g_n-g,\psi)=0$. 

The lemma is proved.
\end{proof}

\begin{lemma}
\label{lemma:Maximizer-converge}
Let $k\in L_q(\RR^d)$ и $(f_n)$ be a maximizing sequence of $L_p$ functions for the convolution operator $K:\,L_p\to L_{r'}$. Put $h_n=Bf_n$; according to Corollary~{\rm\ref{cor:B_preserves_maximization}}, 
$(h_n)$ is also a maximizing sequence for the operator $K$. If the sequence $(f_n)$ converges weakly in $L_p$, 
then there exists a function $h\in L_p$ such that 

 (i) $h_n\weakto h$ in $L_p$;
 
 (ii) for any set $\Omega\subset \RR^d$ of finite measure,
	$\norm{h_{n}-h}_{L_p(\Omega)}\to 0$ as $n\to\infty$.
\end{lemma}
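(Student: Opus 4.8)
The plan is to exploit the factorization $B=B^r_p\circ B^p_r$ and to apply Lemma~\ref{lemma:Bconverge} twice: once to the sequence $(f_n)$ through the inner operator $B^p_r$, and once, in its form with the roles of $p$ and $r$ (and of $\beta$ and $\alpha$) interchanged, to the intermediate sequence $g_n=B^p_r f_n$ through the outer operator $B^r_p$. Throughout I write $m=\norm{K}_{p,r}$, and I may assume $m>0$, since otherwise $k=0$ a.e.\ and the statement is vacuous.

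First I would handle the inner operator. Because $(f_n)$ is maximizing, $\norm{k*f_n}_{r'}\to m>0$, so the hypothesis of Lemma~\ref{lemma:Bconverge} is satisfied with limiting norm $m$. That lemma then produces the function $g=m^{-\beta}\pow{(\widetilde{k*f})}{\beta}$ and yields both the weak convergence $g_n\weakto g$ in $L_r(\RR^d)$ and the local strong convergence $\norm{g_n-g}_{L_r(\Omega)}\to 0$ on every set $\Omega$ of finite measure.

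The key step is to set up the second application of Lemma~\ref{lemma:Bconverge} to $(g_n)$, for which I must verify that $\norm{k*g_n}_{p'}$ tends to a \emph{positive} limit. Here I would squeeze from both sides. On the one hand, Lemma~\ref{lemma:norm-increase} gives the lower bound $\norm{k*g_n}_{p'}=\norm{k*B^p_r f_n}_{p'}\ge\norm{k*f_n}_{r'}$. On the other hand, since $\norm{g_n}_r=1$, Young's inequality (equivalently, the definition of the operator norm together with the symmetry $\norm{K}_{r,p}=\norm{K}_{p,r}$) gives $\norm{k*g_n}_{p'}\le\norm{K}_{r,p}=m$. As $\norm{k*f_n}_{r'}\to m$, these two bounds force $\norm{k*g_n}_{p'}\to m>0$.

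Finally I would invoke Lemma~\ref{lemma:Bconverge} with $p$ and $r$ interchanged, applied to the weakly convergent sequence $g_n\weakto g$ with limiting norm $m'=m>0$. This produces precisely the function $h=m^{-\alpha}\pow{(\widetilde{k*g})}{\alpha}$, for which $h_n=B^r_p g_n\weakto h$ weakly in $L_p$ (claim (i)) and $\norm{h_n-h}_{L_p(\Omega)}\to 0$ on every set of finite measure (claim (ii)). The only genuine obstacle is the positivity of $m'$; this is exactly where the \emph{improving} (rather than merely not-worsening) character of $B$, encoded in Lemma~\ref{lemma:norm-increase}, is essential, since it is the lower bound $\norm{k*g_n}_{p'}\ge\norm{k*f_n}_{r'}$ that prevents the intermediate convolutions from degenerating to zero.
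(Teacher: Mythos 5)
Your proposal is correct and follows essentially the same route as the paper: one application of Lemma~\ref{lemma:Bconverge} to $(f_n)$ to get weak and local strong convergence of $g_n=B^p_r f_n$, then the observation (via Lemma~\ref{lemma:norm-increase} and the symmetry $\norm{K}_{r,p}=\norm{K}_{p,r}$) that $(g_n)$ is a maximizing sequence for $K:L_r\to L_{p'}$, followed by a second application of Lemma~\ref{lemma:Bconverge} with the exponents swapped. Your explicit two-sided squeeze establishing $\norm{k*g_n}_{p'}\to m>0$ is exactly the content the paper compresses into the phrase that $(g_n)$ is a maximizing sequence for the transposed problem.
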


\begin{proof}
Put $g_n=B^p_r f_n$. Lemma~\ref{lemma:Bconverge} is applicable with $m=\norm{K}_{p,r}$
and it yields weak convergence of $(g_n)$ in $L_r$.
	
Due to the equality $\norm{K}_{r,p}=\norm{K}_{p,r}$ 
and Lemma~\ref{lemma:norm-increase},  $(g_n)$ is
a maximizing sequence for the operator $K:\,L_{r}\to L_{p'}$. Applying Lemma~\ref{lemma:Bconverge} again, with replacements $B^p_r\mapsto B^r_p$,
   	 $\beta\mapsto\alpha$, $f_n\mapsto g_n$ and $g_n\mapsto h_n$, we obtain the function $h= \wlim\limits_{n\to\infty} h_n\in L_p$ that possesses all the required properties.  
   %
\end{proof}

\subsection
{Shifts, centering, and tightness}

The lemmas of this subsection are but various technical
expressions of the simple idea:
if a mass is concentrated near the origin, then
a long distance shift is incompatible with
centering. 

Let us first turn to the notions introduced in 
Definition~\ref{def:delta-support}
and prove boundedness of the set of shift vectors
that provide  $\delta$-near-centering of a given function for varying but small values of $\delta$.

\begin{lemma}
\label{lemma:bounded_shifts}
Let $1\le p<\infty$.
Fix $f\in L_p$, $\delta_0<\frac{1}{3}\norm{f}_p^p$ and a unit vector $v\in\RR^d$. Put $D=\diam^p_{\delta_0,v}(f)$.
Let $a_0\in\RR^d$ be the vector for which 
the function $T_{a_0} f$ is $\delta_0$-near-centered
in the direction $v$. 
If a function $T_{a} f$ is $\delta$-near-centered in the direction $v$ for some $\delta\le \delta_0$ and $a\in\RR^d$,
then $|(a-a_0,v)|\le D$.
\end{lemma}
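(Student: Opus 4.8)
The plan is to reduce to the case $a_0=0$ and then pass to a one-dimensional mass distribution along $v$. Since the relevant quantities depend on the shift only through its component along $v$, and since both the $\delta$-near-centering property and the $\delta$-diameter are invariant under translation, I would replace $f$ by $T_{a_0}f$ and correspondingly $a$ by $a-a_0$ (using $T_af=T_{a-a_0}(T_{a_0}f)$). This reduces the claim to the following statement: if $f$ is itself $\delta_0$-near-centered in the direction $v$, with $\delta_0$-near-support $[\alpha_0,\beta_0]$ satisfying $\alpha_0\le 0\le\beta_0$ and $\beta_0-\alpha_0=D$, and if $T_af$ is $\delta$-near-centered in the direction $v$ for some $\delta\le\delta_0$, then $|(a,v)|\le D$.

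Next I would introduce the projected measure $\mu$ on $\RR$ defined by $\mu(E)=\int_{(x,v)\in E}|f|^p$, whose total mass is $M=\norm{f}_p^p$ and which, being the push-forward of an $L_1$ density under projection onto the line $\RR v$, is non-atomic. Writing $c=(a,v)$ and using $\int_{s<(x,v)<t}|T_af|^p=\mu\rbr{(s-c,\,t-c)}$, the near-centering of $T_af$ translates into the assertion that the point $m:=-c$ is a ``$\delta$-center'' of $f$: there is an interval of length $\diam^p_{\delta,v}(f)$ containing $m$ on which $\mu$ places total mass $M-\delta$, split into two equal halves $(M-\delta)/2$ to either side of $m$. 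The goal thus becomes $|m|\le D$.

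The heart of the argument is to show that $m$ cannot lie outside the $\delta_0$-near-support $[\alpha_0,\beta_0]$, and this is exactly where the hypothesis $\delta_0<\tfrac13 M$ enters. Suppose, toward a contradiction, that $m>\beta_0$ (the case $m<\alpha_0$ being symmetric). After the above translation, the right half of the $\delta$-near-mass of $T_af$ occupies a slab $\fbr{m<(x,v)<m+\beta}$ lying entirely in $\fbr{(x,v)>\beta_0}$, so its $\mu$-mass is at most the mass of $f$ outside its own $\delta_0$-near-support, namely $M-(M-\delta_0)=\delta_0$. On the other hand this half-mass equals $(M-\delta)/2\ge (M-\delta_0)/2$. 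Hence $(M-\delta_0)/2\le\delta_0$, that is $M\le 3\delta_0$, contradicting $\delta_0<\tfrac13 M$. Therefore $\alpha_0\le m\le\beta_0$, and since $\alpha_0\le 0\le\beta_0$ with $\beta_0-\alpha_0=D$ we obtain $-D\le\alpha_0\le m\le\beta_0\le D$, i.e.\ $|(a,v)|=|m|\le D$, as required.

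The only point needing care is the bookkeeping of open versus closed intervals and of the two distinct near-supports, but the non-atomicity of $\mu$ makes all boundary contributions vanish, so no genuine difficulty arises there. The essential and only delicate step is the mass comparison above, which is driven entirely by the strict inequality $\delta_0<\tfrac13 M$ (this is also precisely the role played by the analogous bound $\delta_0<\frac13\norm{f}_p^p$ in the statement).
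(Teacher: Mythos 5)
Your proposal is correct and follows essentially the same route as the paper: the paper likewise reduces to a one-dimensional mass distribution along $v$ (via $x_1\mapsto\int|f|^p\,dx_2\cdots dx_d$, which is your pushforward measure), and its core step is the same mass-comparison contradiction — if the new center lay beyond the old near-support, a half-mass of size at least $(\norm{f}_p^p-\delta_0)/2>\norm{f}_p^p/3$ would have to fit into a region of mass at most $\delta_0<\norm{f}_p^p/3$. The only cosmetic differences are that the paper does not normalize $a_0=0$ and phrases the contradiction as $a>a_0+D$ rather than locating the center inside $[\alpha_0,\beta_0]$.
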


\begin{proof}
Assume, without loss of generality, that $v=(1,0,\dots,0)$. Introducing the one-variable function
$$
 f_1(x_1)=\int_{\RR^{d-1}} |f(x)|^p\,dx_2\,\dots\, dx_n,
$$
we reduce the general case to the case $d=1$, $p=1$, $f\geq 0$ (where $f$ now stands for $f_1$ from the line above.)

Now $a_0$ and $a$ are scalars. Let $\norm{f}_1=m$. Due to the assumed centerings we have, first, 
$$
  \int_{a_0-D}^{a_0} f 
  \ge \frac{m -\delta_0}{2},
  \qquad
  \int_{a_0}^{a_0+D} f 
  \ge \frac{m-\delta_0}{2} .
$$
Next,  
$$
  \int_{-\infty}^{a} f 
  \ge \frac{m -\delta}{2},
  \qquad
  \int_{a}^{\infty} f 
  \ge \frac{m -\delta}{2}.
$$
Suppose that $a>a_0+D$. Then
$$
 \frac{m}{3}<\frac{m -\delta}{2}\le \int_a^\infty f
  \le \int_{a_0+D}^\infty f \le m-\int_{a_0-D}^{a_0+D} f \le 
  \delta_0<\frac{m}{3},
$$
a contradiciton. Likewise, the assumption $a<a_0-D$
leads to a contradiciton. 
We conclude that $|a-a_0|\le D$.
The lemma is proved.
\end{proof}

Further lemmas of this subsection 
pertain to the notions introduced in Definition~\ref{def:concentration}.

\begin{lemma}
\label{lemma:shifted_sequence}
Suppose the sequence of vectors $a_n\in\RR^d$ is bounded.
If the sequences $(f_n)$ and $(\shift f_n)$ in $L_p$ are related by shifts, 
$\shift f_n=T_{a_n} f_{n}$, and one of them is tight,
then the other one is tight, too.
 \end{lemma}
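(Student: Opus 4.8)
The plan is to reduce the whole statement to one elementary fact: a translation turns an integral over a fixed cube into an integral over a translated cube, while preserving the $L_p$ norm. Since the relation $\shift f_n = T_{a_n} f_n$ is equivalent to $f_n = T_{-a_n}\shift f_n$, and the sequence $(-a_n)$ is bounded precisely when $(a_n)$ is, the two implications are symmetric; it therefore suffices to assume that $(f_n)$ is tight and deduce that $(\shift f_n)$ is tight. The identity I will lean on is the change of variables $\int_{Q'} |\shift f_n|^p = \int_{Q'} |f_n(x-a_n)|^p\,dx = \int_{Q'-a_n} |f_n|^p$, valid for every measurable set $Q'$, together with the isometry $\norm{\shift f_n}_p = \norm{f_n}_p$.

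First I would fix $R>0$ such that $\abr{(a_n,e_j)}\le R$ for all $n$ and all coordinate directions $e_j$; this is possible because $(a_n)$ is bounded. Now let $\delta>0$ be given. Tightness of $(f_n)$ provides an index $n_0$ and an axis-parallel cube $Q$ with $\int_Q |f_n|^p \ge \norm{f_n}_p^p - \delta$ for all $n\ge n_0$. The next step is to enlarge $Q$ once and for all: expanding $Q$ by $R$ in each coordinate direction yields an axis-parallel cube $Q'$ that contains every translate $Q+a_n$, that is, $Q\subseteq Q'-a_n$ for all $n$.

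With this single cube $Q'$ the conclusion is immediate. Since $\abr{f_n}^p\ge 0$, monotonicity of the integral gives, for every $n\ge n_0$,
$$
\int_{Q'} |\shift f_n|^p = \int_{Q'-a_n} |f_n|^p \ge \int_{Q} |f_n|^p \ge \norm{f_n}_p^p - \delta = \norm{\shift f_n}_p^p - \delta,
$$
the last equality because $T_{a_n}$ is an $L_p$-isometry. Thus $(\shift f_n)$ is $\delta$-near-finite, and as $\delta>0$ was arbitrary, $(\shift f_n)$ is tight.

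There is essentially no obstacle in this argument; the only point that requires care is that a \emph{single} cube must serve all indices $n$ simultaneously, and this is exactly what the boundedness of $(a_n)$ secures. Were the shifts unbounded, the enlargement needed to cover all translates $Q+a_n$ would be unbounded and no fixed cube $Q'$ could work, so the hypothesis cannot be dropped.
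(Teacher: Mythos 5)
Your proof is correct and follows essentially the same route as the paper: both arguments fix a bound $R$ on the shifts, enlarge the cube furnished by tightness by $R$ in each coordinate so that a single $n$-independent cube contains all translates $Q+a_n$, and then conclude by the change of variables and the isometry of translation. The symmetry remark handling the converse direction matches the paper's ``if and only if'' phrasing.
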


\begin{proof}
Let $\norm{a_n}\le R$  for all $n$.
For any coordinate cube $Q$, the shifted cube $T_{a_n} Q$ is contained in the  $n$-independent cube $Q_R$ concentric with $Q$ and with side length which exceeds that of $Q$ by $2R$. Therefore for any $\delta>0$ the sequence
$(f_n)$ is $\delta$-near-finite if and only if such the same is true about the sequence $(\shift f_n)$.
The lemma is proved.
\end {proof}

\begin{lemma}
\label{lemma:subseq-strong-concentration}
Suppose the sequence $(f_n)$ в $L_p$ ($1\le p<\infty$) is relatively tight and $\norm{f_n}_p=1$ for all $n$. 
Suppose further that all the functions $f_n$ are $\delta_0$-near-centered (of order $p$) with some $\delta_0<1/3$.
Then  the sequence $(f_n)$ is tight.   
\end{lemma}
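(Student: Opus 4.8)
The plan is to establish $\delta$-near-finiteness coordinate by coordinate and then recombine the coordinate slabs into a single cube by a union bound. Fix $\delta>0$. Since a cube that captures all but $\delta'$ of the mass also captures all but $\delta$ of it whenever $\delta'\le\delta$, it suffices to prove $\delta$-near-finiteness for small $\delta$; so I may assume $\delta/d<(1-\delta_0)/2$ and put $\delta'=\delta/d$. Relative tightness then furnishes an index $n_0=n_0(\delta')$ and a finite number $R=\sup_{n\ge n_0}\sup_{\|v\|=1}\diam^p_{\delta',v}(f_n)$, so that for every $n\ge n_0$ and every coordinate direction $e_j$ the $\delta'$-near-support $[\alpha,\beta]=\dsupp^p_{\delta',e_j}(f_n)$ has width $\beta-\alpha\le R$ and carries mass $\int_{\alpha<x_j<\beta}|f_n|^p=1-\delta'$.

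The crux — and the step I expect to be the main obstacle — is to pin down the \emph{location} of this near-support from the single centering hypothesis, i.e.\ to convert centering at the fixed level $\delta_0$ into location control at the finer level $\delta'$. Because $f_n$ is $\delta_0$-near-centered in the direction $e_j$, the half-space $\{x_j<0\}$ carries mass at least $(1-\delta_0)/2$, and symmetrically so does $\{x_j>0\}$; hence each open half-space carries mass at most $(1+\delta_0)/2$. Now if the left endpoint satisfied $\alpha>0$, the whole near-support would lie in $\{x_j>0\}$, forcing $1-\delta'\le(1+\delta_0)/2$, i.e.\ $\delta'\ge(1-\delta_0)/2$, which contradicts the choice of $\delta'$; therefore $\alpha\le 0$, and by the mirror argument $\beta\ge 0$. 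Thus $0\in[\alpha,\beta]$, and together with $\beta-\alpha\le R$ this forces $[\alpha,\beta]\subseteq[-R,R]$.

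Consequently $\int_{|x_j|>R}|f_n|^p\le\delta'$ for every $j$ and every $n\ge n_0$, since the slab $\{|x_j|\le R\}$ contains the near-support and so retains mass at least $1-\delta'$. Summing over the $d$ coordinates gives $\int_{\max_j|x_j|>R}|f_n|^p\le\sum_{j=1}^d\int_{|x_j|>R}|f_n|^p\le d\delta'=\delta$, so the cube $Q=[-R,R]^d$ satisfies $\int_Q|f_n|^p\ge 1-\delta$ for all $n\ge n_0$. This is exactly $\delta$-near-finiteness of order $p$; as $\delta>0$ was arbitrary, the sequence $(f_n)$ is tight. The hypothesis $\delta_0<1/3$ enters only through the inequality $(1-\delta_0)/2>1/3$, which guarantees that the admissible range $\delta'<(1-\delta_0)/2$ is nonempty and lets the half-space mass estimate overpower the $1-\delta'$ captured by a near-support sitting on one side of the origin.
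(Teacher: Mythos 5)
Your proof is correct. The skeleton is the same as the paper's (control each coordinate direction at level $\delta/d$, then take a union bound over the $d$ coordinate slabs to get the cube), but the middle step differs in a worthwhile way. The paper re-centers: it chooses shifts $a_n$ making $T_{a_n}f_n$ $\delta$-near-centered, invokes Lemma~\ref{lemma:bounded_shifts} (which is where the hypothesis $\delta_0<\tfrac13$ is actually consumed) to bound $|(a_n,e_j)|$ by the $\delta_0$-diameter $D_0$, and then enlarges the cube from side $2D$ to side $2(D+D_0)$ to undo the shift. You instead locate the fine-level near-support of the \emph{unshifted} $f_n$ directly: the $\delta_0$-centering forces each coordinate half-space to carry mass at least $(1-\delta_0)/2$, hence at most $(1+\delta_0)/2$, so a $\delta'$-near-support with $\delta'<(1-\delta_0)/2$ cannot sit entirely on one side of the origin and must straddle it, whence it lies in $[-R,R]$. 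This buys you two things: you bypass Lemma~\ref{lemma:bounded_shifts} and the auxiliary shifts entirely, and your argument only needs $\delta_0<1$ rather than $\delta_0<\tfrac13$ (as you note, the $\tfrac13$ enters only to make the range of admissible $\delta'$ nonempty, which any $\delta_0<1$ already does). The paper's route has the mild advantage of reusing a lemma it needs elsewhere anyway. All the small steps in your write-up check out: the reduction to small $\delta$ is legitimate since a cube witnessing $\delta'$-near-finiteness witnesses $\delta$-near-finiteness for $\delta\ge\delta'$, the near-support carries mass exactly $1-\delta'$ by Definition~\ref{def:delta-support}, and the inclusion $[\alpha,\beta]\subseteq[-R,R]$ follows from $\alpha\le 0\le\beta$ and $\beta-\alpha\le R$.
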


\begin{proof}
Without loss of generality we may assume that for $\delta=\delta_0$ and all vectors $e_j$ of the fixed orthonormal basis in $R^d$ 
the condition in the last part of Definition~\ref{def:concentration} holds with $n_0=1$. Thus there is $D_0>0$ such that for any $n\ge 1$,
$$
  \diam^p_{\delta_0,e_j}(f_n) < D_0.  
$$

It suffices to verify the condition of $\delta$-near-finiteness for any given $\delta>0$.

Fix $\delta$; we may assume that $\delta<1/3$.
Let us select the vectors $a_n$ so as to obtain
$\delta$-near-centered functions $\shift f_n=T_{a_n} f_n$. 
By Lemma~\ref{lemma:bounded_shifts}, for all $n\ge 1$ and $j=1,\dots,d$ we have $|(a_n,e_j)|\le D_0$. 

By definition of a relatively tight sequence,
there exist $n_0$ and $D$ such that 
$\diam^p_{(\delta/d),e_j}(f_n)\le D$ for all $n\ge n_0$ and $j=1,\dots,d$. Then 
$$
  \int_{|(x,e_j)|>D+D_0} |f_n|^p\le \int_{|(x,e_j)|>D} |\shift f_n|^p\le \frac{\delta}{d}
$$
for $n\ge n_0$ and $j=1,\dots,d$.
 
Put $R=D+D_0$. 
The complement of the cube $Q=[-R,R]^d$ 
is the union of the sets $\{x\,|\,|(x,e_j)|>R\}$, $j=1,\dots,d$.
Therefore, $\int_Q |f_n|^p\ge \norm{f_n}_p^p-d\cdot (\delta/d)=1-\delta$.

\smallskip 
The condition of $\delta$-near-finiteness is affirmed, and the Lemma is proved.
\end{proof}


The next lemma, though not used in the proof of 
Theorem~\ref{thm:minimizer_exists}, is included
as it further clarifies the connection between the notions of relative tightness and tightness. 

\begin{lemma}
\label{lemma:concentration-to-strong-concentration}
Suppose $(f_n)$ is a relative tight sequence in $L_p$ ($1\le p<\infty$) and $\norm{f_n}_p=1$ for all $n$.
If the sequence $(f_n)$ is $\delta_0$-near-finite for some $\delta_0<1/3$, then it is tight.   
\end{lemma}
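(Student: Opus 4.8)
The plan is to reduce this statement to Lemma~\ref{lemma:subseq-strong-concentration}, which already establishes tightness for a relatively tight, norm-one sequence \emph{provided} its member functions are themselves $\delta_0$-near-centered. Since relative tightness is unaffected by shifts (each $\delta$-diameter $\diam^p_{\delta,v}$ is translation-invariant) and tightness is preserved under bounded shifts by Lemma~\ref{lemma:shifted_sequence}, it suffices to produce a \emph{bounded} family of shift vectors $(a_n)$ for which the functions $\shift f_n=T_{a_n}f_n$ are $\delta_0$-near-centered of order $p$. Then Lemma~\ref{lemma:subseq-strong-concentration} yields tightness of $(\shift f_n)$, and Lemma~\ref{lemma:shifted_sequence} transports it back to $(f_n)$.

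First I would extract the localization furnished by the $\delta_0$-near-finiteness hypothesis. Let $Q=\prod_{j=1}^d[c_j,c_j+L]$ be the coordinate cube of side $L$ and $n_0$ the index from Definition~\ref{def:concentration} with $\int_Q|f_n|^p\ge 1-\delta_0$ for $n\ge n_0$. For each axis direction $e_j$, the slab $\fbr{x\,|\,(x,e_j)\in[c_j,c_j+L]}$ contains $Q$, so $\int_{(x,e_j)\in[c_j,c_j+L]}|f_n|^p\ge 1-\delta_0$, while its complement lies in $\RR^d\setminus Q$, whence $\int_{(x,e_j)\notin[c_j,c_j+L]}|f_n|^p\le\delta_0$ for $n\ge n_0$.

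The key step, and the main obstacle, is the uniform bound on the centering shifts. Fix $n\ge n_0$ and a direction $e_j$, and let $[a,b]=\dsupp^p_{\delta_0,e_j}(f_n)$, so that $b-a=\diam^p_{\delta_0,e_j}(f_n)\le L$ (the slab over $[c_j,c_j+L]$ already captures at least $1-\delta_0$ of the mass), while the slab over $[a,b]$ carries exactly $1-\delta_0$ of the mass. Because $1-\delta_0>1/2>\delta_0$, the interval $[a,b]$ cannot lie entirely outside $[c_j,c_j+L]$, as its slab would then carry at most $\delta_0$; hence $[a,b]$ meets $[c_j,c_j+L]$, and having length $\le L$ it lies in $[c_j-L,\,c_j+2L]$. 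The centering abscissa in direction $e_j$ — the point $s$ splitting the near-support mass in half, so that the required shift component is $-s$ — lies in $[a,b]\subset[c_j-L,\,c_j+2L]$; its modulus is therefore bounded by a constant $M_j$ independent of $n\ge n_0$. The finitely many functions $f_n$ with $n<n_0$ can each be $\delta_0$-near-centered by some finite shift (any function can, since $\delta_0<1=\norm{f_n}_p^p$), so the full family $(a_n)$ of centering vectors is bounded over all $n$.

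Setting $\shift f_n=T_{a_n}f_n$ gives a norm-one sequence that is $\delta_0$-near-centered of order $p$ and, by translation invariance of the diameters, still relatively tight. Lemma~\ref{lemma:subseq-strong-concentration} then shows $(\shift f_n)\in\Seq{Tgt}$, and since $(a_n)$ is bounded, Lemma~\ref{lemma:shifted_sequence} shows $(f_n)\in\Seq{Tgt}$, completing the proof. The hypothesis $\delta_0<1/3$ enters only through the invocation of Lemma~\ref{lemma:subseq-strong-concentration}; the shift-bounding step itself uses merely $\delta_0<1/2$, which keeps the near-support — carrying more than half the mass — from escaping the localization cube.
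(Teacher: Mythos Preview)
Your proof is correct and follows essentially the same route as the paper: shift each $f_n$ so as to $\delta_0$-near-center it, show the required shifts are bounded, invoke Lemma~\ref{lemma:subseq-strong-concentration}, and then pull tightness back via Lemma~\ref{lemma:shifted_sequence}. The only difference is in the boundedness step: the paper argues by contradiction with the half-space masses of the \emph{centered} function, whereas you locate the near-support of the \emph{original} $f_n$ inside $[c_j-L,c_j+2L]$ by an intersection argument; both are short and equivalent in spirit.
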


\begin{proof}
Consider the $\delta_0$-near-centered sequence $(\shift f_n)$, obtained from $(f_n)$ by means of suitable shifts,
$\shift f_n=T_{a_n} f_n$. 
Let us show that the sequence of vectors $(a_n)$ is bounded in $\RR^d$.

We may assume that the coordinate cube $Q$ in the definition of $\delta_0$-near-finiteness has the origin as its center and is described by the inequalities $|(x,e_j)|\le R$, $j=1,\dots,d$.
If $(a_n,e_j)>R$, then 
$$
 \int_{(x,e_j)>0} |\shift f_n|^p\le \int_{(x,e_j)>R} |f|^p\le \delta_0, 
$$
which contradicts the function $\shift f_n$ being $\delta_0$-near-centered. Therefore, $(a_n,e_j)\le R$. Similarly $(a_n,e_j)\ge -R$.
Thus, 
$\sup_n \|a_n\|\le R\sqrt{d}$.

Applying Lemma~\ref{lemma:subseq-strong-concentration} to the sequence $(\shift f_n)$ and then applying Lemma~\ref{lemma:shifted_sequence}  to the pair of sequences
$(f_n)$, $(\shift f_n)$, we come to the conclusion as stated.
\end{proof}

\subsection{The final lemma}
\label{ssec:final_lemma}

\begin{lemma}
\label{lemma:final}
Suppose that the sequence of functions $(f_n)$ in $L_p(\RR^d)$ possesses the following properties:

\begin{itemize}
\item[{\rm(i)}] 
normalization:
$\norm{f_n}_p=1$ при всех $n$;


\item[{\rm(ii)}] 
tightness (see Definition~{\rm\ref{def:concentration}});

\item[{\rm(iii)}]  
local convergence:
there exists a function $f\in L_p$ to which $f_n$ converges on bounded sets: $\norm{I_{\Omega}(f_{n}-f)}_p\to 0$
 for any bounded set $\Omega\subset \RR^d$.	

\end{itemize}

Then $f_n\to f$ in $L_p$. In particular, $\norm{f}_p=1$.

\end{lemma}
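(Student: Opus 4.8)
The plan is to use tightness (ii) to confine almost all of the $L_p$-mass of every $f_n$ (for large $n$) to a single bounded cube, and then to use local convergence (iii) on that cube to pass to the limit. The only preliminary fact I need about the candidate limit $f$ is that it, too, carries mass at most one; this is what will keep the tail of $f$ outside the cube under control.

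First I would show $\norm{f}_p\le 1$. Exhausting $\RR^d$ by an increasing sequence of bounded cubes $\Omega_k\nearrow\RR^d$, local convergence gives $\int_{\Omega_k}|f|^p=\lim_{n}\int_{\Omega_k}|f_n|^p\le 1$ for each fixed $k$; letting $k\to\infty$ and invoking monotone convergence yields $\norm{f}_p^p\le 1$.

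Next, fix $\eps>0$ and choose a small $\delta>0$ (to be pinned down at the end). By tightness there is a coordinate cube $Q$ and an index $n_0$ with $\int_{Q}|f_n|^p\ge 1-\delta$, i.e.\ $\int_{\RR^d\setminus Q}|f_n|^p\le\delta$, for all $n\ge n_0$. Since $Q$ is now \emph{fixed} and bounded, local convergence applies on $Q$ and gives $\int_{Q}|f|^p=\lim_{n}\int_{Q}|f_n|^p\ge 1-\delta$; combined with the global bound $\norm{f}_p^p\le 1$, this forces $\int_{\RR^d\setminus Q}|f|^p\le\delta$ as well. Thus both $f_n$ (for $n\ge n_0$) and $f$ place all but $\delta$ of their mass inside the same cube. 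I would then split
\[
 \norm{f_n-f}_p^p=\int_{Q}|f_n-f|^p+\int_{\RR^d\setminus Q}|f_n-f|^p.
\]
The first integral tends to $0$ by local convergence on the fixed cube $Q$, hence is $<\delta$ once $n\ge n_1\ge n_0$; the second is at most $2^{p-1}\big(\int_{\RR^d\setminus Q}|f_n|^p+\int_{\RR^d\setminus Q}|f|^p\big)\le 2^{p}\delta$ by the preceding estimates. Therefore $\norm{f_n-f}_p^p\le(1+2^{p})\delta$ for $n\ge n_1$, and choosing $\delta$ with $(1+2^{p})\delta<\eps^{p}$ gives $\norm{f_n-f}_p\to0$; in particular $\norm{f}_p=\lim_n\norm{f_n}_p=1$.

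There is no deep obstacle here, and the care lies entirely in the order of the quantifiers. The cube $Q$ depends on $\delta$, so it must be frozen \emph{before} local convergence is invoked on it, and the crucial estimate $\int_{\RR^d\setminus Q}|f|^p\le\delta$ only becomes available \emph{after} the global bound $\norm{f}_p\le 1$ has been secured. This last step is the one most easily overlooked, and it is precisely what encodes the ``no escape of mass'' mechanism that makes the lemma true.
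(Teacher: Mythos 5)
Your proof is correct, and its skeleton is the same three-term splitting as the paper's: confine the tails of $f_n$ by tightness, handle the bulk by local convergence on a fixed bounded set, and control the tail of $f$ separately. The one place you diverge is in how that last tail is controlled. The paper simply observes that $f\in L_p$, so there is a bounded set $U$ with $\norm{I_{\RR^d\setminus U}f}_p<\eps/3$, and then works on $\Omega=U\cup Q$; no comparison of masses is needed. You instead first prove $\norm{f}_p\le 1$ by exhaustion and monotone convergence, then use local convergence of the norms on $Q$ to deduce $\int_{\RR^d\setminus Q}|f|^p\le\delta$, i.e.\ you force $f$'s mass into the \emph{same} cube $Q$ that tightness provides for the $f_n$. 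Both routes work; yours yields the slightly stronger by-product that $f$ concentrates in the tightness cube itself, but your closing claim that the bound $\norm{f}_p\le 1$ is ``precisely what encodes the no-escape-of-mass mechanism'' overstates its role --- the tail of the single function $f$ is controlled for free by $f\in L_p$, and the genuine no-escape mechanism is hypothesis (ii) applied to the sequence. So the extra step is harmless but dispensable.
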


\begin{proof} 
Let $\eps>0$ be given.
Take a bounded set $U$ such that
$\norm{I_{\RR^d\setminus U} f}_p< \eps/3$.

Due to the assumptions (i) and (ii), there are $n_1$ and a cube $Q$ in $\RR^d$ such that 
$\norm{I_{\RR^d\setminus Q} f_n}_p< \eps/3$
for $n\ge n_1$. 

Put $\Omega=U\cup Q$.
Due to the assumption (iii), there is $n_2$ such that $\norm{I_{\Omega} \cdot (f_n-f)}_p< \eps/3$
for $n\ge n_2$. 
Clearly, for $n\ge \max(n_1,n_2)$ we have the inequalities
$$
 \norm{f_n-f}_p\le \norm{I_{\Omega}(f_n-f)}_p+\norm{I_{\RR^d\setminus\Omega} f_n}_p+\norm{I_{\RR^d\setminus\Omega} f}_p
 <\eps/3+\eps/3+\eps/3=\eps.
$$
The lemma is proved.
\end{proof}

\section{Supplementary results}
\label{sec:discussion}

\setcounter{subsection}{-1}
\subsection{A survey} 
\label{ssec:discussion-survey}

In this section we put together diverse, relatively simple results related to various aspects and details of formulation and proof of Theorem~\ref{thm:minimizer_exists}. 
Some other related, but unsolved questions 
are considered in Section\,\ref{sec:open-problems}. 

\smallskip
Subsection\,\ref{ssec:limit_cases}. {\em Limit cases}.
Theorem~\ref{thm:minimizer_exists} excludes the cases where at least one of the exponents $p$, $q$, $r$ in Young's inequality equals $1$ or $\infty$.
We analyse all such cases. A summary of the results is presented on Fig.~\ref{fig:existence_summary}.

\smallskip
Subsection\,\ref{ssec:compact-group}. {\em Convolution on compact groups}.
The analog of Theorem~\ref{thm:minimizer_exists} for compact groups is an easy result. The groups need not be commutative.

\smallskip
Subsection\,\ref{ssec:counterex-perturbation}. {\em Counterexample: a near-convolution without a maximizer}.
We give a counterexample to demonstrate that the assumptions
in Theorem~\ref{thm:minimizer_exists} cannot be relaxed
by allowing integral operators $K$ with non-translation-invariant kernels, even under the assumption that the kernel 
is pointwise dominated by the kernel $k(x-y)$ of an admissible convolution operator. Another possibility to relax the assumptions is to consider compact or even finite-dimensional perturbations of a convolution operator. In that case, we were unable to prove or disprove the existence of a maximizer; see Question~\ref{opq:compact-perturbation} in Section\,\ref{sec:open-problems}.

\smallskip
Subsection\,\ref{ssec:necessary-cond-extremum}. {\em Necessary condition of extremum}. 
First, using the standard Lagrange multipliers method,
we derive a nonlinear integral equation that must be satisfied
by a maximizer.
Then we prove an ``approximative'' version of the necessary condition of extremum: if the norm of 
the convolution $k*f$ is close to $\|K\|$ and
$\|f\|=1$, then $f$ satisfies the mentioned equation
up to a small error.%
\eref{com:discrepancy}    

\smallskip
Subsection\,\ref{ssec:any-subseq-conv}. {\em Convergence to a maximizer in the class $\Seq{Max}$ $($rather than in $\Seq{SMax})$}.
In the course of the proof of Theorem~\ref{thm:minimizer_exists}
we have established that any {\it special} (that is, lying in the image of the improving operator $B$) maximizing sequence becomes relatively compact after applying appropriate shifts. Here we show that the same is true for arbitrary maximizing sequences. 
 
This simple result is perhaps of minor significance, but we included it due to an authoritative motivation\eref{com:Lions-convergence}.

\smallskip
Subsection\,\ref{ssec:maximizer-for-limit-kernel}. {\em Kernel approximation and convergence of maximizers}. Given a sequence of convolution kernels
$k_n$ converging in $L_q$ to a kernel $k$, is it true that a maximizer for the operator $K_k$ can be obtained as a limit (in $L_p$) of maximizers for the operators $K_{k_n}$? 
Proposition \ref{propos:approximation-kernel} answers this question in the affirmative. The result 
can be of use, for example, when one has to compute
a maximizer for convolution with non-compact and, possibly, weakly singular kernel: the kernel can be approximated by bounded and finitely supported truncations.

\smallskip
Subsection\,\ref{ssec:maximizer-in-any-Lp}. {\em On boundedness and integrability of maximizers}

If one has ``a spare room of integrability'', $k\in L_{q-\eps}\cap L_{q+\eps}$ (as in Pearson's theorem), then a maximizer belongs to $L_{p_\#}\cap L_\infty$, where $p_\#<p$ does not depend on $\eps$. See also Question~\ref{opq:maximizer_in_L1Linfty} in Section\,\ref{sec:open-problems}.

\smallskip
Subsection\,\ref{ssec:norm-lower-bound}. {\em On the lower bound of convolution operators' norms}.
The estimate in Lemma~\ref{lemma:concentration_property} becomes less efficient as the norm of the operator $K$ decreases. (Cf.\ Remark~\ref{rem:small_norm}).
This fact has no adverse consequences for the proof of Theorem~\ref{thm:minimizer_exists}, 
but one should keep it in mind if the results of Section\,\ref{sec:non-spreading}
are to be used for obtaining uniform estimates
(over some family of kernels $k$).
In particular, suppose that the absolute value $|k(x)|$ of the kernel fixed; then how small can the norm $\|K_k\|_{p,r}$ be? Proposition~\ref{prop:zero_lower_bound} states that 
$\inf\|K_k\|_{p,r}=0$.%
\eref{com:convolution-lower-bound}

\subsection{Limit cases}
\label{ssec:limit_cases}

For $1\le p,q\le\infty$ the relation~\eqref{pqr}  defines the exponent $r\in[1,\infty]$
if and only if $1/p+1/q\ge 1$, equivalently, if $q\le p'$. If $q=p'$, then $r'=\infty$.
Consider the coordinate $(u,v)$-plane with $u=1/q$, $v=1/p'$. The domain corresponding to admissible pairs $(q,p)$ in Young's inequality is the triangle
formed by the lines 
 (I) $u=1$ (i.e. $q=1$), (II) $v=0$ (i.e. $p=1$) and (III) $u=v$  (i.e. $r'=\infty$).
Correspondingly we have three limit cases
and subcases corresponding to the vertices of the triangle.
The results are summarized on Fig.~\ref{fig:existence_summary}.

In the conditions considered below
we always assume that pointwise equalities and inequalities are fulfilled a.e.

\begin{figure}
\centerline{\scriptsize
\begin{picture}(240,126)
\put(0,8){\includegraphics{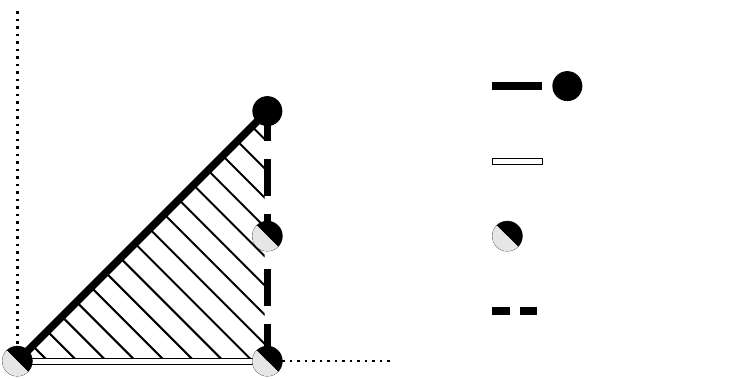}}
\put(100,17){$1/q$}
\put(8,105){$1/p'$}
\put(74,0){I(A)}
\put(84,48){I(B)}
\put(73,93){I(C)}
\put(33,2){II(A)}
\put(-5,0){II(B)}
\put(30,54){III}
\put(2.5,85.6){\line(1,0){5}}
\put(-5,83){$1$}
\put(175,90){Exists}
\put(170,68.5){Does not exist }
\put(170,47){Both possibilities}
\put(170,28){Nonexistence is possible}
\put(170,19){Existence --- ?}
\end{picture}
} 
\caption{The cases of existence/nonexistence of a  maximizer}
\label{fig:existence_summary}
\end{figure}

\medskip
Case I. $q=1$. 

\smallskip
Subcase I(A). $q=1$, $p=r'=1$.

\smallskip
$\mathrm{I(A)_1}$. If $k\geq 0$, then any function $f\ge 0$ with $\int f=1$  is a maximizer.
Obviuosly, the same holds true for functions of the form $k=ck_+$, where $k_+\ge 0$, $c=\const$.     

\smallskip
$\mathrm{I(A)_2}$. If $k$ is not a function with constant complex argument (in the real case --- 
a sign-changing function), then
a maximizer does not exist.  Indeed, one can choose a maximizing sequence to be a $\delta$-sequence, so
$\|K_k\|_{1,\infty}=\|k\|_1$; but the equality in the
integral Minkowski inequality
$$
 \int\abr{\int k(y)f(x-y)\,dy}\,dx\le \int |k(y)|\,dy\; \int|f(x)|\,dx
$$
is impossible (if $f\ne 0$).

\smallskip
Subcase I(B). $q=1$, $p=r'\in(1,\infty)$. 

\smallskip
$\mathrm{I(B)_1}$. Let us show that if $k\ge 0$, then there is no maximizer. 

In this case $\|K_k\|_{p,p'}=\|k\|_1$. 
Indeed, the sequence of pairs $\{f_n,g_n\}$ with   
$$
f_n(x)=n^{-1/p} I_{[0,n]}(x),
\qquad
g_n(x)=n^{-1/p'} I_{[0,n]}(x)
$$
is a maximizing sequence for the bilinear form $(k*f,g)$. 
Then $\|f_n\|_p=\|g_n\|_{p'}=1$ and
$$
 (k*f_n,g_n)=\frac{1}{n}\int_0^n \int_{x-n}^x k(t)\,dt\,dx=\int_{-n}^n k(t)\rbr{1-\frac{|t|}{n}}\,dt \to \int k(t)\,dt.
$$
A hypothetical maximizer $f$ would satisfy the equality
$$
 \norm{\int k(y) T_y f(\cdot)\,dy}_p=\|k\|_1\, \|f\|_p,
$$
which is the case of equality in the Minkowski integral inequality. This, in turn, would imply 
the existence of a function 
$\lambda(y)\ge0$ such that $T_y f(x)=\lambda(y) f(x)$ for almost all $x$, $y$.
But this is clearly impossible unless $f=0$.

\smallskip
$\mathrm{I(B)_2}$. 
Let us show that generally (for not-constant-sign functions) a maximizer can exist.
Let $p=r'=2$.  The operator $K_k$ acts in the Hilbert space $L_2$ and is unitary equivalent to multiplication by the continuous function $w=\mathcal{F}k$, where $\mathcal{F}k$ is the Fourier transform with unitary normalization.
Let $m=\|w\|_\infty$.
A maximizer exists if and only if the set $\{\xi:\, |w(\xi)|=m\}$ has positive measure.
This is possible. For example, let $w$ be a ``hat'' function: $w\in C_0^\infty$, $0\le w(\xi)\le 1$ everywhere, and $w(\xi)=1$ in some neighborhood of zero, $U$.
Then $k=\mathcal{F}^{-1}w\in L_1$ and $\norm{k*\mathcal{F}^{-1}I_U}_2=\norm{w\cdot I_U}_2=\norm{I_U}_2\norm{K_k}_{2,2}$.

\smallskip
The question as to whether  
a maximizer can exist in the case $p\neq 2$ is left open. (See Question \ref{opq:q1p_not2} in Section\,\ref{sec:open-problems}).

\smallskip
Subcase I(C). $q=1$, $p=r'=\infty$. A maximizer exists: for instance,  $f(x)=\overline{k(-x)}/{|k(-x)|}$. Indeed,
$\|f\|_\infty=1$ and
$$
 \|K_k\|_{\infty,1}\le\|k\|_1=(k*f)(0)\le \|k*f\|_\infty ,
$$ 
whence $\|k*f\|_\infty=\|K_k\|_{\infty,1}$.

\medskip
Case II. $p=1$. The operator $K_k$ acts from $L_1$ to $L_q$.
We assume that $q>1$, since the subcase $q=1$ is explored earlier, in I(A).

\smallskip
Subcase II(A). $p=1$, $1<q=r'<\infty$. 
A maximizer does not exist. Indeed, a $\delta$-sequence is a maximizing sequence:
$k*f_n\to k$ in $L_q$, so that $\|K_k\|_{1,q'}=\|k\|_q$. 
 The situation is similar to the one we have encountered in $\mathrm{I(B)_1}$, with functions $k$ and $f$ interchanged.
 A hypothetical maximizer $0\ne f\in L_1$ would realize the case of equality in Minkowski's inequality
$$
 \norm{\int f(y) T_y k(\cdot)\,dy}_q=\|f\|_1\, \|k\|_q,
$$
but this is impossible.

\smallskip
Subcase II(B). $p=1$, $q=r'=\infty$. 
Put $m=\|k\|_\infty$. One readily sees that $\|K_k\|_{1,1}=m$. If $k^{-1}(m)$ is a set of positive measure, then a maximizer trivially exists.
If $k^{-1}(m)$ is a set of measure zero, then 
both existence and non-existence of a maximizer
are possible. We give a partial criterion of
existence in the case of a nonnegative kernel
$k$.

\begin{propos}
Suppose that $k\in L_\infty$, $k\ge 0$, and $|k^{-1}(m)|=0$.
Put $U_a=\{x\,|\, k(x)\ge m-a\}$. 
In order for a maximizer of the convolution operator $K_k:\,L_1\to L_\infty$ to exist it is necessary that $|U_a|=\infty$ for all $a>0$, and sufficient that 
there are vectors $v_n$ such that the union
$$
 \hat U=\bigcap_{n=1}^\infty T_{v_n}\rbr{ U_{1/n} }
$$
has positive measure.
\end{propos}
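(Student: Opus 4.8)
The plan is to recast the problem: writing $g=k*f$ and recalling that $\|K_k\|_{1,1}=m$ (as already observed for subcase~II(B)), a unit-norm $f$ is a maximizer precisely when $\sup_x g(x)=m$. First I would reduce to $f\ge 0$: replacing $f$ by $|f|$ leaves $\|f\|_1$ unchanged and, since $k\ge 0$, can only increase $|g|$ pointwise, so it preserves the maximizing property and gives $0\le g\le m$. Because $k\in L_\infty$ and $f\in L_1$, the convolution $g$ is bounded and uniformly continuous, hence $\|g\|_\infty=\sup_x g(x)$. The crucial preliminary observation is that the value $m$ can never be attained at a finite point: if $g(x_0)=m$ then $\int\bigl(m-k(x_0-y)\bigr)f(y)\,dy=0$ with a nonnegative integrand, forcing $f$ to live on $\{y:\ k(x_0-y)=m\}$, a null set since $|k^{-1}(m)|=0$; this contradicts $\|f\|_1=1$. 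Consequently $g(x)<m$ for every $x$, so on each ball $g$ attains a maximum strictly below $m$, and any maximizer must satisfy $g(x_n)\to m$ along some sequence with $|x_n|\to\infty$.

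For the \emph{necessity} of $|U_a|=\infty$, I would split the convolution according to whether $x-y\in U_a$. Using $k\le m$ on $U_a$ and $k<m-a$ off $U_a$, and abbreviating $x-U_a=\{x-z:\ z\in U_a\}$, one gets
\[
g(x)\le (m-a)+a\,P_a(x),\qquad P_a(x)=\int_{x-U_a} f ,
\]
so $g(x_n)\to m$ forces $P_a(x_n)\to 1$ for every $a>0$. I would then show this is impossible once $|U_a|<\infty$. Splitting $x_n-U_a$ into its parts inside and outside the ball $B_R$ about the origin: the inner part has Lebesgue measure $|(x_n-U_a)\cap B_R|=|U_a\cap B_R(x_n)|\to 0$ as $|x_n|\to\infty$ (because $U_a$ has finite measure and the ball $B_R(x_n)$ about $x_n$ escapes to infinity), so by absolute continuity of the integral of the single $L_1$ function $f$ its $f$-mass tends to $0$; the outer part contributes at most $\int_{B_R^c} f<\varepsilon$ by tightness of $f$. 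Hence $P_a(x_n)\to 0$, contradicting $P_a(x_n)\to 1$. Therefore a maximizer can exist only if $|U_a|=\infty$ for all $a>0$.

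For \emph{sufficiency}, suppose $\hat U=\bigcap_n T_{v_n}(U_{1/n})$ has positive measure. I would fix a subset $V\subseteq\hat U$ with $0<|V|<\infty$ and put $f=|V|^{-1}I_{-V}$, so that $\|f\|_1=1$. Since $V\subseteq v_n+U_{1/n}$, for a.e.\ $y\in V$ we have $k(y-v_n)\ge m-\tfrac1n$; evaluating the convolution at the point $-v_n$ gives
\[
g(-v_n)=\frac{1}{|V|}\int_V k(y-v_n)\,dy\ge m-\frac{1}{n}.
\]
Hence $g(-v_n)\to m$, and together with $g\le m$ this yields $\sup_x g(x)=m$, so $f$ is a maximizer. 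Here the reflection $-V$ (rather than $V$) is precisely what turns the kernel argument $x-y$ at $x=-v_n$ into $y-v_n$, matching the level-set information encoded in $\hat U$; this is why the construction is robust against the non-symmetry of $k$.

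The main obstacle is the necessity direction, specifically the step $g(x_n)\to m\Rightarrow$ contradiction when $|U_a|<\infty$: the quantity $P_a(x_n)$ is a convolution-type mass that could a priori remain close to $1$ as $x_n\to\infty$, and excluding this requires combining the finiteness of $|U_a|$ with the uniform integrability of the \emph{fixed} $L_1$ function $f$ (both its tightness and the absolute continuity of its integral), rather than any pointwise decay. The remaining ingredients — the impossibility of a finite maximum via $|k^{-1}(m)|=0$, and the explicit construction in the sufficiency part — are comparatively routine.
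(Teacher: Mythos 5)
Your proof is correct; the sufficiency half is essentially the paper's argument (same test function $|V|^{-1}I_{-V}$ with $V\subset\hat U$ of finite positive measure, same evaluation at $-v_n$; where the paper passes to a neighbourhood of $-v_n$ via $L_1$-continuity of shifts to control the essential supremum, you invoke the uniform continuity of $k*f$, which is the same fact). The necessity half, however, takes a genuinely different route. The paper notes that once some $|U_{a_0}|<\infty$, continuity of measure from above gives $|U_a|\downarrow|k^{-1}(m)|=0$ as $a\downarrow 0$; it then picks $a$ so small that $|U_a|<\delta$, with $\delta$ supplied by absolute continuity of $\int f$, so that $A(x)=\int_{U_a}f(x-y)\,dy<\eps$ \emph{uniformly in} $x$ (every translate $x-U_a$ has the same small measure), and concludes directly that $\|k*f\|_\infty\le m-a(1-\eps)<m$ for every admissible $f\ge 0$ --- no maximizing sequence of points and no behaviour at infinity is needed. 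You instead keep $a$ fixed, first prove the auxiliary fact that $g=k*f$ cannot attain the value $m$ at a finite point (forcing near-maximum points $x_n$ to escape to infinity), and then show $P_a(x_n)=\int_{x_n-U_a}f\to 0$ by combining the tightness of the fixed $L_1$ function $f$ with the vanishing of $|U_a\cap B_R(x_n)|$ for $|x_n|\to\infty$. Both arguments are sound. The paper's is shorter and yields an explicit, $f$-independent gap $m-\|k*f\|_\infty\ge a(1-\eps)$; yours avoids shrinking $a$ (so it never uses $|k^{-1}(m)|=0$ in the necessity part beyond the no-finite-maximum lemma) and isolates the reusable observation that the supremum of $k*f$ can only be approached at infinity.
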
 

\begin{proof}
1)  Necessity.
Suppose $\inf_a |U_a|<\infty$. Then 
$\lim_{a\to 0}|U_ a|= |k^{-1}(m)|=0$.
Let $f\ge 0$ and $\int f=1$. We will prove that $f$ is not a maximizer. Take $\eps\in(0,1)$. 
Due to absolute continuity of the Lebesgue integral
(see e.g. \cite[v.\,1, Theorem 2.5.7]{Bogachev_2003}) there exists $\delta>0$ such that
 $\int_{\Omega} |f|<\eps$ for any set $\Omega$ of measure $|\Omega|<\delta$.
 Let  $a$ be such that $|U_a|<\delta$. Then for any $x$ we have $A(x)=\int_{U_a} f(x-y)\,dy < \eps$.
 Hence
 $$
 \ba{rcl}
  k*f(x)& = & \displaystyle
  \int_{U_a} k(y) f(x-y)\,dy + \int_{\RR^d\setminus U_a} k(y) f(x-y)\,dy \;\le
  \\[3ex]
  & \le & \displaystyle
  mA(x)+(m-a)(1-A(x)) \le m-a+a\eps.
  \ea
 $$
 Therefore $\|k*f\|_\infty\le m-a(1-\eps)<m$, as claimed.
 
\smallskip
2) Sufficiency. Let $\Omega\subset \hat U$ be a set of finite positive measure. We will show that $f=|\Omega|^{-1} \widetilde{I_\Omega}$
is a maximizer.
Indeed, we have
$$
 k*f(-v_n) =|\Omega|^{-1} \int k(y) I_\Omega(y+v_n)\,dy =|\Omega|^{-1} \int_{T_{-v_n}(\Omega)} k(y).
$$
But  $T_{-v_n}(\Omega)\subset T_{-v_n}(\hat U)\subset U_{1/n}$. Hence $k(y)\ge m-1/n$ whenever $y\in T_{-v_n}(\Omega)$
and $k*f(-v_n)\ge m-1/n$. 
Due to $L_1$-continuity of the shift operator, in some neighbourhood of the point $-v_n$ we have $k*f(x)\ge m-2/n$.
We conclude that $\|k*f\|_\infty \ge m-2/n$. Since $n$ is arbitrary, $\|k*f\|_\infty= m$. 

The Proposition is proved.
\end{proof} 


For example (in the one-dimensional case), for the kernels $k(x)=e^{-|x|}$ or $k(x)=|\sin x|$
there is no maximizer, while for the kernel $k(x)=1+\tanh x$  a maximizer exists.

\medskip
Case III. $r'=\infty$, $q=p'$. It suffices to assume that $1<q<\infty$, since the subcases $q=1$ and $q=\infty$ have been already covered
--- see.\ I(C) and II(B).

The present case is simple; a maximizer does exist. Given $k\in L_q$, put (using notation introduced in Section\,\ref{ssec:improving})
$f(x)=\pow{\tilde k(x)}{q/q'}$. Then
the case of equality in H\"older's inequality is realized:
$$
 k*f(0)=(k,\tilde f)=\int |k(x)|^{1+q/q'}=\|k\|_q^q =\|k\|_q \|f\|_{q'},
$$
and hence, too, the case of equality in Young's inequality:
$$
\|k*f\|_\infty=\|k\|_q \|f\|_{q'}, 
$$
so that $f$ is a maximizer.

It is instructive to compare this case with II(A), since the two cases deal with operators which are the transposes of each other. The relevant bilinear form in both cases is formally the same, however the conclusions about the existence of a  
maximizer are opposite.

Let $k\in L_q$. We fix the symbol $K$ to mean the operator of convolution with kernel $k$ acting from $L_{q'}$ to $L_\infty$. The transposed operator, acting from $L_1$ to $L_q$, as in II(A), will be denoted $K'$. We have
$$
 \|K\|=\|K'\|=\sup_{f,g\,: \|f\|_{q'}=\|g\|_1=1} (Kf,g)=\sup_{f,g\,: \|f\|_{q'}=\|g\|_1=1} (f,K'g).
$$
Our conclusions on (non-)existence of a maximizer can be expressed by means of the formula
$$
\|K\|=\|K'\|=\sup_{\|g\|_1=1} \max_{\|f\|_{q'}=1} (Kf,g),
$$
where $\sup$ cannot be replaced by $\max$. The underlying cause of the difference is of course the non-reflexivity of $L_\infty$. 
If we allow $g\in L_\infty^*$, then $\sup$ becomes attainable.
More precisely, by the Hahn-Banach theorem there exists $\gamma\in L_\infty^*$, $\|\gamma\|_{L_\infty^*}=1$ such that
$$
  (Kf_0,\gamma)=\|Kf_0\|_\infty=\|K\|, 
$$
where $f_0\in L_{q'}$ is a maximizer for the operator $K$. 
In order to describe the matters explicitly,   let us note that the image of the operator $K$ lies in the 
closed subspace $C_0\subset L_\infty\cap C$
of continuous functions vanishing at infinity. The space $C_0^*$ is the space of finite Borel measures.
The element $\gamma\in C_0^*$ realizing the equality
$$
  (Kf_0,\gamma)=\|Kf_0\|_\infty=\|K\|
$$
is the measure $\delta_{x_0}$, where $x_0$ is a point of maximum of $Kf_0(x)$.

\subsection{Convolution on compact groups}
\label{ssec:compact-group}

\begin{propos}
Let $G$ be a compact topological group with Haar measure $d\mu$, the spaces $L_p(G)$ defined with respect to this measure.
Let $k\in L_q(G)$.
Then the convolution operator $K_k: \, f(x)\mapsto \int_G k(xy^{-1})f(y)\,d\mu(y)$ acts boundedly from $L_p$ to $L_{r'}$,
where, as everywhere in this paper, $1/p+1/q+1/r=2$, and there exists a maximizer $F\in L_p(G)$:
$$
 \|F\|_p=1, \qquad \|K_k F\|_{r'}=\|K\|_{p,r}.
$$
\end{propos}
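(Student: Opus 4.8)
The plan is to exploit the \emph{finiteness} of the Haar measure on the compact group $G$, which removes the only genuine difficulty present in the proof of Theorem~\ref{thm:minimizer_exists}, namely the possible escape of mass to infinity. Consequently the tightness and concentration machinery of Sections~\ref{sec:non-spreading}--\ref{sec:lemmas} is not needed, and a direct weak-compactness argument suffices. First I would record the preliminaries: compact groups are unimodular, so Haar measure is both left- and right-invariant and the inversion $y\mapsto y^{-1}$ preserves it; normalizing $\mu(G)=1$, one has the nesting $L_\infty(G)\subset L_s(G)\subset L_t(G)$ for $s\ge t$. Young's inequality on unimodular groups then gives $\norm{K_k f}_{r'}\le\norm{k}_q\norm{f}_p$, so $K=K_k$ is bounded from $L_p$ to $L_{r'}$ with $\norm{K}_{p,r}\le\norm{k}_q$. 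If $\norm{K}_{p,r}=0$ then every unit-norm $F$ is trivially a maximizer, so I may assume $\norm{K}_{p,r}>0$.

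The key step is the compact-group analog of Lemma~\ref{lemma:converge-convolutions}: \emph{$K$ maps every weakly convergent sequence in $L_p(G)$ to a sequence convergent in $L_{r'}(G)$ norm}. This is precisely Lemma~\ref{lemma:converge-convolutions} applied with the cutoff $\chi\equiv 1$, which now lies in $L_{r'}(G)\cap L_\infty(G)$ exactly because $\mu(G)<\infty$. The proof carries over verbatim. For a bounded kernel $k\in L_\infty\subset L_{p'}$ and each fixed $x$, the change of variables $y\mapsto xy^{-1}$ shows that $y\mapsto k(xy^{-1})$ belongs to $L_{p'}(G)$ with norm $\norm{k}_{p'}$; hence $f\mapsto(k*f)(x)$ is a bounded linear functional on $L_p$, so $(k*f_n)(x)\to(k*f)(x)$ pointwise, while the uniform bound $\abr{(k*f_n)(x)}\le\norm{k}_{p'}$ supplies a constant (hence $L_{r'}(G)$-integrable) majorant and the Dominated Convergence Theorem applies. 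The general case $k\in L_q$ then follows by truncation and the $\eps/3$ argument using Young's inequality, just as in the cited lemma. This is the one place where compactness of $G$ is essential.

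Granting this, the conclusion is immediate. I would start with any maximizing sequence $(f_n)$, $\norm{f_n}_p=1$, $\norm{Kf_n}_{r'}\to\norm{K}_{p,r}$. Since $1<p<\infty$, the space $L_p(G)$ is reflexive, so by the Banach--Alaoglu theorem a subsequence converges weakly, $f_n\weakto F$, with $\norm{F}_p\le 1$ by weak lower semicontinuity of the norm. By the key step, $Kf_n\to KF$ in $L_{r'}$, whence $\norm{KF}_{r'}=\norm{K}_{p,r}$. On the other hand $\norm{KF}_{r'}\le\norm{K}_{p,r}\norm{F}_p\le\norm{K}_{p,r}$, and since $\norm{K}_{p,r}>0$ these inequalities force $\norm{F}_p=1$. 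Thus $F$ is the desired maximizer.

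The main (indeed the only) obstacle is establishing the weak-to-norm continuity of $K$; it is resolved essentially for free by the finiteness of $\mu(G)$, and no centering or choice of shifts is required. This is why the statement is, as claimed, an easy result, and why commutativity of $G$ plays no role in the argument.
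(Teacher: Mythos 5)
Your proof is correct, but it takes a genuinely different (and more economical) route than the paper. The paper's own argument also begins by extracting a weakly convergent subsequence from a maximizing sequence, but then pushes that subsequence through the improving operator $B$ of Section~\ref{ssec:improving} and invokes the analogs of Lemmas~\ref{lemma:converge-convolutions}, \ref{lemma:Bconverge} and~\ref{lemma:Maximizer-converge} to obtain a \emph{strongly convergent} maximizing sequence, whose limit is the maximizer; the authors note only that the compactness lemma is easier here because no spatial truncation of $\chi$ is needed. You bypass $B$ entirely: since $\mu(G)<\infty$ permits $\chi\equiv 1$ in Lemma~\ref{lemma:converge-convolutions}, the operator $K$ itself carries weak convergence $f_n\weakto F$ to norm convergence $Kf_n\to KF$ in $L_{r'}$, and the chain $\norm{K}_{p,r}=\norm{KF}_{r'}\le\norm{K}_{p,r}\norm{F}_p\le\norm{K}_{p,r}$ forces $\norm{F}_p=1$ directly. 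This is the classical ``a compact operator on a reflexive space attains its norm'' argument; it buys simplicity and makes transparent that neither the improving operator nor commutativity of $G$ plays any role. What the paper's route buys is uniformity with the proof of Theorem~\ref{thm:minimizer_exists} (reusing machinery already built) and, as a byproduct, a norm-convergent improved maximizing sequence. Your verification of the key lemma on $G$ is also sound: unimodularity and inversion-invariance of Haar measure give $\norm{k(x\,\cdot^{-1})}_{p'}=\norm{k}_{p'}$ for bounded $k$, the constant majorant is integrable because $\mu(G)<\infty$, and the $\eps/3$ truncation handles general $k\in L_q$.
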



\begin{proof}
Boundedness of the operator $K_k$ (Young's inequality) is a well-known fact. (Sufficient assumption is that the group $G$ is locally compact and unimodular,
see e.g. \cite[(20.18), (20.19)]{Hewitt-Ross1}.) 
Now, take any maximizing sequence $(f_n)$ and select a weakly convergent subsequence.
The improving operator $B$ maps it to a strongly convergent one; a proof of the required analog of Lemma~\ref{lemma:converge-convolutions} is even easier here: we do not need a ``truncation in the horizontal direction'' to obtain a compaclty supported function. The limit is a maximizer.
\end{proof}

\subsection{Counterexample: a near-convolution without a maximizer}
\label{ssec:counterex-perturbation}

It is natural to ask about possible relaxation
of conditions of Theorem~\ref{thm:minimizer_exists} and to try to exhibit sufficient conditions that the kernel $K(x,y)$ of the integral operator $\tilde K:\;f(x)\mapsto \int K(x,y)f(y)\,dy$ should satisfy, not necessarily being translation-invariant, in order to guarantee the existence of a maximizer. As the example below demonstrates, conditions of such a sort,
if possible at all, cannot be stated in terms of integral and pointwise inequalities only:  
here, there is no maximizer, although we have a pointwise majorization $0<K(x,y)<k(x-y)$ with $k\in L_q$.

\begin{propos}
Let $p,q,r$ be as in Theorem~\ref{thm:minimizer_exists} and $d=1$.
Let $k\in L_q(\RR^1)$ and $k(x)>0$ everywhere. 
Consider the integral operator $\tilde K$ with kernel
$\tilde K(x,y)=k(x-y) u(y)$, 
where $u(y)$ is monotone increasing and $\lim_{y\to-\infty} u(y)=0$, $\lim_{y\to+\infty} u(y)=1$.
The operator $\tilde K:\,L_p\to L_{r'}$ is continuous. A maximizer for the operator $\tilde K$ does not exist.
\end{propos}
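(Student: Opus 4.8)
The plan is to factor the given operator as $\tilde K f=K(uf)$, where $K$ denotes the genuine convolution operator with kernel $k$ (to which Theorem~\ref{thm:minimizer_exists} applies), and then to show that multiplication by $u$ costs nothing in the operator norm while strictly decreasing the $L_p$ norm of every nonzero function; the two facts together preclude a maximizer.

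First I would record the factorization and deduce continuity. Since $\tilde K(x,y)=k(x-y)u(y)$, we have $\tilde Kf(x)=\int k(x-y)(uf)(y)\,dy=K(uf)(x)$, so $\tilde K=K\circ M$ with $M$ the operator of multiplication by $u$. Because $0<u(y)<1$ for all $y$, $M$ is a contraction on $L_p$, whence $\norm{\tilde Kf}_{r'}=\norm{K(uf)}_{r'}\le\norm{K}_{p,r}\norm{uf}_p\le\norm{K}_{p,r}\norm{f}_p$. This gives continuity of $\tilde K:\,L_p\to L_{r'}$ together with the bound $\norm{\tilde K}_{p,r}\le\norm{K}_{p,r}$.

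The second step is to prove the reverse inequality $\norm{\tilde K}_{p,r}\ge\norm{K}_{p,r}$ by an ``escaping mass'' argument that uses $u\to 1$ at $+\infty$. By Theorem~\ref{thm:minimizer_exists}, the convolution operator $K$ admits a maximizer $f_0$, with $\norm{f_0}_p=1$ and $\norm{Kf_0}_{r'}=\norm{K}_{p,r}$. Put $f_n=T_n f_0$. A change of variables and the Dominated Convergence Theorem (the integrand is dominated by $|f_0|^p$ and tends to $0$ pointwise, since $u(y+n)\to 1$) give $\norm{uf_n-f_n}_p\to 0$. Since $K$ commutes with shifts and translations are isometries of $L_{r'}$, we have $\norm{Kf_n}_{r'}=\norm{T_n Kf_0}_{r'}=\norm{K}_{p,r}$, so that $\norm{\tilde Kf_n}_{r'}=\norm{K(uf_n)}_{r'}\ge\norm{Kf_n}_{r'}-\norm{K}_{p,r}\norm{uf_n-f_n}_p\to\norm{K}_{p,r}$. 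Hence $\norm{\tilde K}_{p,r}=\norm{K}_{p,r}$.

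Finally, non-existence. Suppose $f$ were a maximizer, $\norm{f}_p=1$ and $\norm{\tilde Kf}_{r'}=\norm{\tilde K}_{p,r}=\norm{K}_{p,r}$. Then $\norm{K(uf)}_{r'}=\norm{K}_{p,r}$, while $\norm{K(uf)}_{r'}\le\norm{K}_{p,r}\norm{uf}_p$ forces $\norm{uf}_p\ge 1$. But $\norm{uf}_p^p=\int|u|^p|f|^p<\int|f|^p=1$ strictly, because $|u(y)|^p<1$ for every $y$ and $f\neq 0$ on a set of positive measure. This contradiction shows that no maximizer exists. The crux of the argument is exactly this strict loss of mass: a genuine maximizer cannot tolerate even an infinitesimal decrease of its $L_p$ norm, yet multiplying by $u$ always produces one. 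I expect the only point requiring real care to be the equality $\norm{\tilde K}_{p,r}=\norm{K}_{p,r}$ of the second step, i.e.\ certifying that mass can escape to $+\infty$ without the $(p,r)$-norm dropping, so that $\norm{K}_{p,r}$ is genuinely the supremum that $\tilde K$ approaches but never attains.
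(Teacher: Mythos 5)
Your proposal is correct and follows essentially the same route as the paper: both establish $\norm{\tilde K}_{p,r}=\norm{K_k}_{p,r}$ (the paper asserts this as "readily seen"; you supply the translation-to-$+\infty$ argument) and then derive the contradiction from the strict inequality $\norm{uf}_p<\norm{f}_p=1$ for any nonzero $f$. The extra detail you provide for the norm equality is a faithful expansion of the step the paper leaves implicit.
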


\begin{proof}
One readily sees that $\|\tilde K\|_{p,r}=\|K_k\|_{p,r}$. If $f\in L_p$ ($\|f\|_p=1$) is a maximizer for the operator $\tilde K$, then the function $f(x)u(x)$ must be a  maximizer for the  convolution operator $K_k$. But it is clear that $\|f u\|_{p}<1$, a contradiction.
\end{proof}

\subsection{Necessary condition of extremum}
\label{ssec:necessary-cond-extremum}

The notation from Section~\ref{ssec:improving} will be used. 
The next Proposition does not refer to the existence of maximizer result, so we allow the case $q=1$. 

\begin{propos}
\label{propos:eq-for-maximizer}
Suppose that $1\le q<\infty$ and $1<p,r<\infty$.
(The relation \eqref{pqr} is assumed as always.)	
A maximizer of the convolution operator $K$, if it exists, satisfies the equation $f=Bf$.
\end{propos}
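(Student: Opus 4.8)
The plan is to read the equation $f=Bf$ directly off the equality cases in the sharp inequalities that underlie Lemma~\ref{lemma:norm-increase}; this is equivalent to, but more self-contained than, the Lagrange multiplier derivation. Let $f$ be a maximizer, so $\norm{f}_p=1$ and $\norm{k*f}_{r'}=M$ with $M=\norm{K}_{p,r}$. Write $h=k*f$ and $g=B^p_r f$; by definition $g=M^{-\beta}\pow{\tilde h}{\beta}$ and $\norm{g}_r=1$.

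First I would show that $g$ is itself a maximizer of $K:\,L_r\to L_{p'}$. Lemma~\ref{lemma:norm-increase} gives $\norm{k*g}_{p'}\ge\norm{k*f}_{r'}=M$, while Young's inequality together with $\norm{K}_{r,p}=\norm{K}_{p,r}=M$ and $\norm{g}_r=1$ gives $\norm{k*g}_{p'}\le M$; hence $\norm{k*g}_{p'}=M$. The crucial point is that the bilinear pairing is not merely of modulus $M$ but is \emph{real and positive}: using the transposition identity $\tilde k*\tilde f=\widetilde{k*f}=\tilde h$ and $\beta+1=r'$,
$$
\rbr{k*g,\tilde f}=\rbr{g,\tilde h}=M^{-\beta}\rbr{\pow{\tilde h}{\beta},\tilde h}=M^{-\beta}\int|\tilde h|^{r'}=M.
$$
Consequently every inequality in the chain $M=\rbr{k*g,\tilde f}\le\int|k*g|\,|\tilde f|\le\norm{k*g}_{p'}\norm{\tilde f}_p=M$ is an equality. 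The first forces $(k*g)\,\tilde f\ge 0$ a.e., and the (generalized) Hölder equality in the second forces $|k*g|^{p'}=M^{p'}|\tilde f|^p$ a.e.

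It remains to substitute these two pointwise relations into $Bf=B^r_p g=S_p\bigl(\pow{\widetilde{k*g}}{\alpha}\bigr)$ and compute. Write $h'=k*g$. The magnitude relation, after the reflection $x\mapsto-x$ and use of $p/p'=p-1$, gives $|h'(-x)|=M|f(x)|^{p-1}$; the sign relation $h'\tilde f\ge0$ shows that the argument of $\overline{h'(-x)}$ equals that of $f(x)$. Since $(p-1)\alpha=1$, the element $\pow{\widetilde{k*g}}{\alpha}(x)=\pow{h'(-x)}{\alpha}=\overline{h'(-x)}\,|h'(-x)|^{\alpha-1}$ has modulus $|h'(-x)|^{\alpha}=M^{\alpha}|f(x)|$ and the argument of $f(x)$, so $\pow{\widetilde{k*g}}{\alpha}=M^{\alpha}f$ pointwise. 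As $M^{\alpha}>0$ and $\norm{f}_p=1$, applying $S_p$ yields $Bf=f$.

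The main obstacle, and the point that must be handled with care, is the second step: one has to verify that $\rbr{k*g,\tilde f}$ is exactly $M$ (a positive real), not merely of absolute value $M$. This is what removes any unimodular phase ambiguity and delivers $f=Bf$ on the nose rather than $f=e^{i\theta}Bf$. The remaining bookkeeping---tracking the reflection $\widetilde{\phantom{x}}$ (which sends $x$ to $-x$), the conjugation in $\pow{\cdot}{\alpha}$, and the exponent identities $\beta r=r'$ and $(p-1)\alpha=1$---is routine. Points where $f$, $h$, or $h'$ vanish cause no difficulty: the magnitude relation makes $h'$ vanish exactly where $\tilde f$ does, and the convention $\pow{0}{\gamma}=0$ covers those sets.
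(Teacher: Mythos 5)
Your proof is correct, and it takes a genuinely different route from the paper's. The paper derives $f=Bf$ by the Lagrange multiplier method: it writes down the Lagrange functional for maximizing $\Re\iint \tilde k(x+y)f(x)\tilde g(y)\,dx\,dy$ under the two norm constraints, computes the first variations in $f$ and $g$, and obtains the system $\tilde g=B^p_r f$, $f=B^r_p\tilde g$, from which $Bf=f$ follows by elimination. You instead extract the same pointwise identities from the equality cases of the triangle and H\"older inequalities in the chain $M=\rbr{k*g,\tilde f}\le\int|k*g|\,|\tilde f|\le\norm{k*g}_{p'}\norm{\tilde f}_p=M$, after first verifying that the pairing is the positive real $M$ (your computation $\rbr{k*g,\tilde f}=M^{-\beta}\int|\tilde h|^{r'}=M$ is exactly right, and is the same identity that drives the paper's Lemma~\ref{lemma:norm-increase}). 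What your approach buys is rigor at no extra cost: the Lagrange computation in the paper is formal (differentiability of the $L_p$ norm functionals and the validity of the multiplier rule in this Banach-space setting are not justified there), whereas your argument uses only the elementary characterization of equality in H\"older's inequality for $1<p<\infty$ and correctly disposes of the phase ambiguity and of the zero sets of $f$ and $k*g$. What the paper's approach buys is motivation: it exhibits $B$ as arising canonically from the Euler--Lagrange system, which is why the paper presents it that way. Your exponent bookkeeping ($\beta r=r'$, $(p-1)\alpha=1$, $p/p'=p-1$) and the identity $\tilde k*\tilde f=\widetilde{k*f}$ all check out, and the argument covers the case $q=1$ admitted by the Proposition.
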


\begin{proof}
We have the optimization problem (with given function $k$ and unknown $f$ and $g$):  
$$
 \Re\iint k(x-y) f(x) g(y)\,dx\,dy \;\to\;\max
$$
under the constraints
$$
  \int|f(x)|^p\,dx=1,\qquad \int|g(y)|^r\,dy=1.
$$
Let us use the Lagrange multipliers method to derive the system of equations to be satisfied by the extremal pair of functions $(f,g)$.
The relevant Lagrange functional can be taken in the form
$$
\mathcal{L}(f,g)=\Re\iint \tilde{k}(x+y) f(x) \tilde{g}(y)\,dx\,dy -\lambda\int|f(x)|^p\,dx-\mu\int|g(y)|^r\,dy.
$$
Computing the partial variation with respect to $f=f_1+if_2$, we get
$$
\delta\mathcal{L}=\int\rbr{G_1-\lambda p|f|^{p-2} f_1}\cdot\delta f_1-\int\rbr{G_2+\lambda p|f|^{p-2} f_2}\cdot\delta f_2,
$$
where $G=G_1+iG_2=\tilde{k}*g$.
Therefore the pair $(f,g)$ that yields an extremum of the functional $\mathcal{L}$ must satisfy the equation
$$
 \bar{G}-\lambda p|f|^{p-2} f=0.
$$
Similarly, equating the partial variation $\delta\mathcal{L}/\delta g$ to zero,  we come to the equation
$$
 \bar F-\mu r|g|^{r-2} \tilde{g}=0,
$$
where $F=\tilde{k}*\tilde{f}=\widetilde{k*f}$.

Taking into account the normalization of $f$ and $g$ and the identities $r-1=r/r'$, $p-1=p/p'$, the obtained system of equations can be written as
$$
 \tilde{g}=B^p_r f, \qquad f=B^r_p\tilde g.
$$
Elimination of $\tilde{g}$ results in the equation $Bf=f$.
\end{proof}


In Subsection~\ref{ssec:any-subseq-conv} we will need an approximative version of the necessary condition of extremum. 

\begin{propos}
\label{propos:ineq-for-maximizer}
For any $\eps>0$ there exists $\delta>0$ (depending on $q,p$ and the convolution kernel $k$)
such that if $\|f\|_p=1$ and $\|k*f\|_{r'}> \|K\|_{p,r}(1-\delta)$, then $\|Bf-f\|_p<\eps$. 
\end{propos}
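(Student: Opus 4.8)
The statement is a ``stability'' or ``approximative'' version of the necessary condition $Bf=f$ from Proposition~\ref{propos:eq-for-maximizer}. My plan is to argue by contradiction using a compactness argument built on the machinery already developed for the proof of Theorem~\ref{thm:minimizer_exists}. Suppose the claim fails for some $\eps_0>0$. Then there is a sequence $(f_n)$ with $\|f_n\|_p=1$ and $\|k*f_n\|_{r'}>\|K\|_{p,r}(1-1/n)$, hence $(f_n)\in\Seq{Max}$, but $\|Bf_n-f_n\|_p\ge\eps_0$ for all $n$. The goal is to extract a subsequence along which $\|Bf_n-f_n\|_p\to 0$, contradicting this.

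First I would reduce to a situation where the compactness lemmas apply. The obstruction is that $(f_n)$ may run off to infinity, so I would replace $f_n$ by a shifted, tightened version. Concretely, apply the construction of the main proof: since $(f_n)\in\Seq{Max}\subset\Seq{RTgt}$ (Corollary~\ref{cor:maximizer-concentration}), choose shifts $a_n$ making $g_n=T_{a_n}f_n$ relatively tight and $\delta_0$-near-centered, so that by Lemma~\ref{lemma:subseq-strong-concentration} the sequence $(g_n)$ is tight. Passing to a subsequence (which I again denote $(g_n)$), I may assume $g_n\weakto g$ weakly in $L_p$ by Banach-Alaoglu; note $(g_n)\in\Seq{Max}$ since $\Seq{Max}$ is shift-invariant. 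Because $B$ commutes with shifts (Lemma~\ref{lemma:B-translation-invariant}), the quantity $\|Bf_n-f_n\|_p=\|T_{a_n}(Bf_n-f_n)\|_p=\|Bg_n-g_n\|_p$ is shift-invariant, so it suffices to show $\|Bg_n-g_n\|_p\to 0$ along a further subsequence.

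Now I would run exactly the endgame of the main proof on $(g_n)$. Set $\shift g_n=Bg_n$. Since $(g_n)\in\Seq{Max}\cap\Seq{WCvg}$, Lemma~\ref{lemma:Maximizer-converge} gives $(\shift g_n)\in\Seq{LCvg}$ with a weak limit $h$, and combined with tightness Lemma~\ref{lemma:final} yields $\shift g_n\to h$ strongly in $L_p$ with $\|h\|_p=1$; moreover $h$ is a maximizer. The remaining task is to upgrade the \emph{weak} convergence $g_n\weakto g$ to \emph{strong} convergence $g_n\to g$ with the \emph{same} limit $h=g$. For this I would use that $g$ is also a maximizer: by weak lower semicontinuity of the norm together with the strong convergence $k*g_n\to k*h$ on sets of finite measure (Corollary~\ref{cor:compactness}) and tightness, one shows $\|k*g\|_{r'}=\|K\|_{p,r}$ and $\|g\|_p=1$; weak convergence plus equal unit norms in the uniformly convex space $L_p$ (here $1<p<\infty$ is essential) forces $g_n\to g$ strongly. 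Finally, applying the explicit formula defining $B$ together with the continuity of $B$ on its domain (Corollary~\ref{cor:Bcontinuous}), strong convergence $g_n\to g$ gives $Bg_n\to Bg$, while the maximizer $g$ satisfies $Bg=g$ by Proposition~\ref{propos:eq-for-maximizer}. Hence $\|Bg_n-g_n\|_p\to\|Bg-g\|_p=0$, the desired contradiction.

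The main obstacle I anticipate is the promotion of weak to strong convergence of $(g_n)$ itself and the verification that its weak limit $g$ is a genuine unit-norm maximizer. The tightness of $(g_n)$ and the local strong convergence of convolutions are the tools that prevent mass from escaping, but one must carefully check that no $L_p$-mass of $g_n$ is lost in the weak limit; this is precisely where uniform convexity of $L_p$ (via the Radon--Riesz property: weak convergence together with convergence of norms implies strong convergence) does the work, and it is the step where the hypotheses $1<p<\infty$ are indispensable.
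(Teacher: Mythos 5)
Your strategy is genuinely different from the paper's. The paper proves this proposition \emph{directly and quantitatively}: it invokes Hanche-Olsen's stability version of H\"older's inequality and applies it to the pairing computed in the proof of Lemma~\ref{lemma:norm-increase}, with $F=\tilde f$ and $G=S_{p'}\bigl(k*\pow{(\widetilde{k*f})}{\beta}\bigr)=\pow{(\widetilde{Bf})}{1/\alpha}$; the hypothesis $\|k*f\|_{r'}>M(1-\delta)$ forces $(F,G)>(1-\delta)^{r'}$, and Hanche-Olsen's lemma then yields $\|f-Bf\|_p=\|F-\pow{G}{\alpha}\|_p<\eps$. That argument is two steps long, uses no compactness, does not rely on the existence of a maximizer, and in principle gives an explicit modulus $\delta(\eps)$. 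Your soft compactness-and-contradiction argument is much heavier machinery for the same statement; note also that the paper deduces Proposition~\ref{prop:any-subseq-conv} (shift-compactness of arbitrary maximizing sequences) \emph{from} this proposition, whereas you essentially re-prove that shift-compactness from scratch inside your argument. There is no circularity in doing so, but the logical economy is reversed.

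As written, your argument has one genuine soft spot: the promotion of the weak limit $g$ of the centered sequence $(g_n)$ to a unit-norm maximizer. Weak lower semicontinuity gives $\|k*g\|_{r'}\le\liminf\|k*g_n\|_{r'}$ and $\|g\|_p\le 1$, which are the \emph{wrong} directions; tightness of $(g_n)$ in $L_p$ by itself does not prevent loss of $L_p$-mass in the weak limit (oscillation on a fixed cube already defeats it), so you cannot invoke Radon--Riesz until $\|g\|_p=1$ is actually established. To close this you need either (a) tightness of the convolutions $(k*g_n)$ in $L_{r'}$ --- which does follow from tightness of $(g_n)$ by truncating $k$ to a bounded near-support, but this transfer is not among the paper's lemmas and must be proved --- giving $k*g_n\to k*g$ strongly in $L_{r'}(\RR^d)$ and hence $\|k*g\|_{r'}=\|K\|_{p,r}\le\|K\|_{p,r}\|g\|_p$; or (b) the explicit formula for $h=\wlim Bg_n$ in terms of $g$ supplied by Lemma~\ref{lemma:Bconverge}, from which $\|h\|_p=1$ forces $\|k*g\|_{r'}=\|K\|_{p,r}$ and $\|g\|_p=1$ by a short chain of Young-inequality estimates. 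A second, related gap: Lemma~\ref{lemma:final} is applied to $(Bg_n)$, so you need tightness of $(Bg_n)$, not of $(g_n)$; since you center $f_n$ rather than $Bf_n$ (the paper's construction centers the $B$-images), this again requires the convolution-tightness transfer, or else you should revert to the paper's ordering of the centering step. Both gaps are fillable, so the approach can be made to work, but the paper's direct route is substantially shorter and yields a quantitative statement.
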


\begin{proof}
We make use of the approximative version of H\"older's inequality due to H.~Hanche-Olsen \cite[Lemma~2]{Hanche-Olsen_06}:%
\eref{com:rev-Holder}	

\smallskip 
{\it For any $\eps>0$ there exists $\eta>0$ such that if $\|F\|_p=\|G\|_{p'}=1$ and $\Re (F,G)>1-\eta$, then $\|F-\pow{G}{\alpha}\|_{p}<\eps$.%
}

\smallskip
Consider an improvement of the estimate in the proof of Lemma~\ref{lemma:norm-increase}. Using the notation of Section~\ref{ssec:improving} 
(in particular, recall: $S_{p'}$ is the radial projection onto the unit sphere in $L_{p'}$, and $\beta=r'/r$), put 
$G=S_{p'}\rbr{k*\pow{(\widetilde{k*f})}{\beta}}$ и $F=\tilde{f}$.
Note that $G=\pow{(\widetilde{Bf})}{1/\alpha}$,  hence $\|F-\pow{G}{\alpha}\|_p=\|f-Bf\|_p$.

Further, denote $g=\pow{(\widetilde{k*f})}{\beta}$ and $M=\|K\|_{p,r}=\|K\|_{r,p}$.

The calculation in the proof of Lemma~\ref{lemma:norm-increase} implies that $\|k*f\|_{r'}^{r'}=(k*g,\tilde{f})=(G,F)\|k*g\|_{p'}$.
In particular, $(F,G)>0$.

We have 
$\|k*g\|_{p'}\le M\|g\|_{r}=M\|k*f\|_{r'}^{r'/r} \le  M^{\beta+1}\|f\|_p^\beta=M^{r'}$.
Assuming that $\|k*f\|_{r'}>M(1-\delta)$, we get
$$
M^{r'}(1-\delta)^{r'}<\|k*f\|_{r'}^{r'}\le  (F,G) M^{r'}.
$$
Therefore, $(F,G)>(1-\delta)^{r'}$.

Let $\eps>0$ be given. Find the corresponding $\eta$ as in Hanche-Olsen's lemma. Define $\delta$ by the equation 
$1-\eta=(1-\delta)^{r'}$. According to the above, we have the inequality $\|f-Bf\|_p<\eps$, as required.   
\end{proof}

\subsection{Convergence to a maximizer in the class \texorpdfstring{$\Seq{Max}$}{Max} (rather than in \texorpdfstring{$\Seq{SMax}$}{SMax})}
\label{ssec:any-subseq-conv}

\begin{propos}
\label{prop:any-subseq-conv}
Let $(f_n)$ be a maximizing sequence for the convolution operator $K:\, L_p\to L_{r'}$ with kernel $k\in L_q(\RR^n)$.
There exists a subsequence $(f_{n_k})$ and shift vectors $a_k$ such that the sequence $T_{a_k} f_{n_k}$
converges in $L_p$ as $k\to\infty$ (its limit automatically being a maximizer for the operator $K$). 
\end{propos}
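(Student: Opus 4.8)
The plan is to re-run the construction from the proof of Theorem~\ref{thm:minimizer_exists}, but to center and extract a weakly convergent subsequence \emph{directly from the given sequence} $(f_n)$ rather than from its $B$-image, and then to transfer the local compactness of the $B$-image back to $(f_n)$ via the approximative necessary condition of extremum. The crucial observation is that, since $\norm{k*f_n}_{r'}\to\norm{K}_{p,r}$, Proposition~\ref{propos:ineq-for-maximizer} gives $\norm{Bf_n-f_n}_p\to 0$; thus $(f_n)$ and the special maximizing sequence $(Bf_n)$ become indistinguishable in $L_p$ norm as $n\to\infty$. This is exactly what is needed to dispense with passing to the image under $B$.

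First I would record that the given maximizing sequence is relatively tight, by Corollary~\ref{cor:maximizer-concentration}. Next, choose shift vectors $a_n$ so that the functions $\tilde f_n=T_{a_n}f_n$ are $\delta_0$-near-centered of order $p$ for a fixed $\delta_0<1/3$ (say $\delta_0=1/4$); this is always possible, and since the $\delta$-diameters are shift-invariant, $(\tilde f_n)$ remains relatively tight. Lemma~\ref{lemma:subseq-strong-concentration} then yields that $(\tilde f_n)$ is tight. Being bounded in $L_p$, the sequence $(\tilde f_n)$ has a weakly convergent subsequence $\tilde f_{n_k}\weakto f$; this subsequence inherits normalization and tightness, and—being a shifted subsequence of a maximizing sequence—is itself maximizing, by shift-invariance of the class $\Seq{Max}$.

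It remains to upgrade weak convergence of $(\tilde f_{n_k})$ to strong $L_p$ convergence, and here the approximative necessary condition does the work. Applying Lemma~\ref{lemma:Maximizer-converge} to the weakly convergent maximizing sequence $(\tilde f_{n_k})$, I obtain a function $h$ such that $B\tilde f_{n_k}\to h$ in $L_p$ on every set of finite measure. Since $B$ commutes with shifts (Lemma~\ref{lemma:B-translation-invariant}), $B\tilde f_{n_k}=T_{a_{n_k}}Bf_{n_k}$, whence $\norm{B\tilde f_{n_k}-\tilde f_{n_k}}_p=\norm{Bf_{n_k}-f_{n_k}}_p\to 0$ by the opening observation. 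Therefore $(\tilde f_{n_k})$ also converges locally to $h$ (and $h=f$ by uniqueness of the weak limit), so $(\tilde f_{n_k})$ satisfies the hypotheses of Lemma~\ref{lemma:final}: normalization, tightness, and local convergence. That lemma gives $\tilde f_{n_k}\to f$ in $L_p$ with $\norm{f}_p=1$; continuity of $K$ together with shift-invariance of $\norm{k*\cdot}_{r'}$ shows $\norm{k*f}_{r'}=\lim\norm{k*f_{n_k}}_{r'}=\norm{K}_{p,r}$, so $f$ is a maximizer. The main obstacle is precisely the step of transferring local compactness from $(Bf_{n_k})$ to $(f_{n_k})$, and it is overcome by Proposition~\ref{propos:ineq-for-maximizer}, the sole place where the asymptotic proximity of a general maximizing sequence to its $B$-image is used.
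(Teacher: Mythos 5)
Your proof is correct and rests on the same key observation as the paper's: Proposition~\ref{propos:ineq-for-maximizer} forces $\norm{f_n-Bf_n}_p\to 0$, which allows the compactness established for the $B$-image to be transferred back to $(f_n)$ itself. The paper's own proof is shorter — it simply takes the shift-convergent subsequence of $(Bf_n)$ already produced in the proof of Theorem~\ref{thm:minimizer_exists} and notes that $\norm{f_n-Bf_n}_p\to 0$ gives $\lim T_{a_k}f_{n_k}=\lim T_{a_k}Bf_{n_k}$ — whereas you re-run the whole centering/tightness/local-convergence construction with the centering applied to $(f_n)$ directly and invoke the proximity to $Bf_n$ only to upgrade local convergence; this is valid but more work than necessary.
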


\begin{proof}
In the proof of Theorem~\ref{thm:minimizer_exists} (see~\S\,\ref{sec:proof-general}) we found that the sequence
$(Bf_n)$ has a subsequence convergent after appropriate shifts. We may assume that the sequence
$(Bf_n)$ itself is convergent.  Proposition~\ref{propos:ineq-for-maximizer} implies that 
 $\norm{f_n-Bf_n}_{p}\to 0$ (since $\|Kf_n\|_{r'}\to \|K\|_{p,r}$.) Therefore $\lim f_n=\lim Bf_n$ does exist.
 \end{proof}

\subsection{Kernel approximation and convergence of maximizers}
\label{ssec:maximizer-for-limit-kernel}

\begin{propos}
\label{propos:approximation-kernel}
Suppose a sequence of function $k_n\in L_q$ converges (strongly) to a nonzero $k\in L_q$. Then there exists a sequence of  
maximizers $f_n\in L_p$ for the convolution operators $K_n=K_{k_n}:\,L_p\to L_{r'}$ that converges strongly to a function $f\in L_p$.
The function $f$ is a maximizer for the convolution operator $K=K_k$.
\end{propos}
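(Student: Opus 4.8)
The plan is to turn maximizers of the perturbed operators $K_n$ into a tight maximizing sequence for the limit operator $K$, and then to reuse the compactness scheme behind Theorem~\ref{thm:minimizer_exists}. I would begin with the norm convergence $\norm{K_n}_{p,r}\to\norm{K}_{p,r}$, which follows at once from Young's inequality applied to the kernel difference, $\abr{\norm{K_n}_{p,r}-\norm{K}_{p,r}}\le\norm{k_n-k}_q\to0$; since $k\ne0$, the limit $\norm{K}_{p,r}$ is positive. By Theorem~\ref{thm:minimizer_exists} every $K_n$ has a maximizer; choose one, $f_n$, with $\norm{f_n}_p=1$ and $\norm{k_n*f_n}_{r'}=\norm{K_n}_{p,r}$. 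Since convolution operators commute with shifts, every translate of $f_n$ is again a maximizer of $K_n$, and I would use this freedom to take $f_n$ to be $\delta_0$-near-centered for a fixed $\delta_0<1/3$.

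The key observation is that $(f_n)$ is automatically a maximizing sequence for the \emph{unperturbed} operator:
$$
\norm{k*f_n}_{r'}\ge\norm{k_n*f_n}_{r'}-\norm{k-k_n}_q=\norm{K_n}_{p,r}-\norm{k-k_n}_q\to\norm{K}_{p,r},
$$
so each $f_n$ is an $\eps_n$-maximizer of $K$ with $\eps_n\to0$. Consequently Corollary~\ref{cor:maximizer-concentration} gives relative tightness of $(f_n)$, and, the $f_n$ being $\delta_0$-near-centered, Lemma~\ref{lemma:subseq-strong-concentration} upgrades this to $(f_n)\in\Seq{Tgt}$. It is worth stressing that tightness is obtained here directly from the concentration estimates for the \emph{limit} operator $K$, so that no uniformity of those estimates over the varying kernels $k_n$ is required.

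It remains to produce strong convergence of the pre-images $f_n$ themselves, and this is the main obstacle, for weak convergence together with tightness does not imply strong convergence (it is the exact analogue of the delicate step in the proof of Theorem~\ref{thm:minimizer_exists} where one ``undoes'' the operator $B$). I would pass, by reflexivity of $L_p$, to a weakly convergent subsequence, still written $(f_n)$, and route through the improving operator $B$ of the limit kernel $k$. By Lemma~\ref{lemma:Maximizer-converge}, $(Bf_n)\in\Seq{LCvg}$; the approximate extremum condition of Proposition~\ref{propos:ineq-for-maximizer} (applicable since $\norm{k*f_n}_{r'}\to\norm{K}_{p,r}$) gives $\norm{Bf_n-f_n}_p\to0$, whence $(Bf_n)$ inherits the tightness of $(f_n)$. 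Lemma~\ref{lemma:final} then forces $Bf_n\to h$ in $L_p$ with $\norm{h}_p=1$ and $h$ a maximizer of $K$, and $\norm{Bf_n-f_n}_p\to0$ yields $f_n\to h$ strongly.

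Strictly speaking this construction delivers a strongly convergent \emph{subsequence} of maximizers; to match the wording of the statement one passes to the corresponding subsequence of kernels, which still converges to $k$, so that $h$ is exhibited as the $L_p$-limit of genuine maximizers of the operators $K_n$. (The full-sequence phrasing is the only point where care is needed, since distinct subsequences could in principle converge to distinct maximizers of $K$ when the latter is not unique.)
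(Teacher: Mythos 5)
Your argument is correct and is essentially the paper's own proof unrolled: the paper makes the same opening observation (any choice of maximizers of the $K_n$ is automatically a maximizing sequence for $K$, by Young's inequality applied to $k-k_n$) and then simply cites Proposition~\ref{prop:any-subseq-conv}, whose proof consists of exactly the steps you carry out inline (centering, tightness, weak subsequence, the improving operator $B$, Proposition~\ref{propos:ineq-for-maximizer}, Lemma~\ref{lemma:final}). Your closing caveat that the construction only delivers a convergent \emph{subsequence} of maximizers is well taken --- the paper's route through Proposition~\ref{prop:any-subseq-conv} has the same feature and passes over it silently --- so your version is, if anything, the more careful one.
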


\begin{proof}
An arbitrary sequence $(f_n)$ of maximizers for the operators $K_n$ is obviously a maximizing sequencefor the operator $K$.
Applying Proposition~\ref{prop:any-subseq-conv}, we obtain the claim as stated.
\end{proof}

\subsection{On boundedness and integrability of maximizers}
\label{ssec:maximizer-in-any-Lp}

\begin{propos}
\label{propos:maximizer_in_anyLp}
Let $k\in L_q$ and $f$ be a maximizer for the convolution operator $K_k$ from $L_p$ to $L_{r'}$. (We assume that neither of $p$, $q$ and $r$ is $0$ or
$\infty$.) 

{\rm (a)} If $k\in L_{q+\eps}$ for some $\eps>0$, then $f\in L_p\cap L_\infty$.

{\rm (b)} If $k\in L_{q-\eps}$ for some $\eps>0$, then $f\in L_p\cap L_{p_\#}$, where 
$$
p_\#=\left\{\ba{ll}
1, &\; r'/p'\leq q,
\\
p(1+p'/r')^{-1}, &\; r'/p'>q
\ea
\right.
$$  
($1<p_\#<p$ for $r'/p'>q$).
\end{propos}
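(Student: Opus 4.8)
The plan is to run a regularity bootstrap off the Euler--Lagrange equation. Since $1<p,r<\infty$, Proposition~\ref{propos:eq-for-maximizer} applies and the maximizer satisfies $f=Bf=B^r_pB^p_rf$. Setting $g=B^p_rf$ and using $|\pow{z}{\gamma}|=|z|^\gamma$ together with $\alpha=p'-1$, $\beta=r'-1$, this unwinds into the two pointwise a.e.\ identities
$$
|g|=c_1\,|k*f|^{r'-1},\qquad |f|=c_2\,|k*g|^{p'-1},
$$
with positive finite constants $c_1,c_2$ (the normalizing $L_r$- and $L_p$-norms). These are \emph{exact} relations, so $f\in L_a\iff k*g\in L_{a(p'-1)}$ and $g\in L_b\iff k*f\in L_{b(r'-1)}$. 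The whole argument is a bootstrap on the reciprocal exponent $x=1/a$ of $f$, pushed once around the cycle $f\to k*f\to g\to k*g\to f$ by Young's inequality with $k\in L_{q\mp\eps}$, i.e.\ with $\phi:=1/(q\mp\eps)$.

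The one structural fact that drives everything is that $1/p+1/r=2-1/q>1$ forces $(p-1)(r-1)<1$, hence
$$
A:=(p'-1)(r'-1)=\frac{1}{(p-1)(r-1)}>1 .
$$
A single cycle yields the affine recursion $x'=Ax+C(\phi)$ with $C(\phi)=(p'-1)r'(\phi-1)$. At $\phi=1/q$ the unique fixed point is $x=1/p$, reflecting the trivial $f\in L_p$; the role of the extra integrability is to move the fixed point off $1/p$ so that the expanding map ($A>1$) carries $x$ away from it. For part (a) I take $\phi=1/(q+\eps)<1/q$, which raises the fixed point above $1/p$; since the start $x_0=1/p$ then lies below an \emph{unstable} fixed point, the iterates obey $x_n-x^{**}=A^n(x_0-x^{**})$ and decrease to $-\infty$. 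In finitely many cycles $x_n$ drops below the H\"older threshold $1-\phi$, so one of the convolutions $k*f$, $k*g$ lands in $L_\infty$; using $g\in L_r\cap L_\infty\subset L_{q'}$ (valid because $q'\ge r$), one more convolution gives $k*g\in L_\infty$ and hence $f\in L_\infty$. Combined with $f\in L_p$ this proves $f\in L_p\cap L_\infty$.

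For part (b) I take $\phi=1/(q-\eps)>1/q$, which lowers the fixed point below $1/p$; now $x_0=1/p$ sits above the unstable fixed point and the iterates \emph{increase}, yielding ever smaller integrability exponents for $f$. The iteration can proceed only while every intermediate Lebesgue exponent stays $\ge1$, and the binding constraint is that $g=c_1|k*f|^{r'-1}$ must stay in $L_{\ge1}$ for the next convolution. Pinning $g$ at the endpoint $L_1$ and closing the cycle gives, in the limit $\phi\to1/q$, the largest reachable reciprocal exponent $x'_{\max}=A(r-\phi)+C(\phi)=(p'-1)/q$, where I use $r+r'=rr'$. Two outcomes follow. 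If $(p'-1)/q\ge1$ the iteration drives $f$ into $L_1$ (interpolating with $f\in L_p$), so $p_\#=1$. If $(p'-1)/q<1$ the reciprocal exponent is capped at $(p'-1)/q$, i.e.\ $f\in L_{q/(p'-1)}$, and a short computation using $1/q=1/p'+1/r'$ gives $q/(p'-1)=pr'/(p'+r')$. Finally the dichotomy $(p'-1)/q\gtrless1$ is exactly $q\lessgtr p'-1$, which via $1/q=1/p'+1/r'$ is equivalent to the stated $r'/p'\lessgtr q$ (both amounting to $p'(p'-1)\gtrless r'$), and $p_\#$ is $\eps$-independent because the cap sits at the $\eps$-free endpoint $g\in L_1$.

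The main obstacle is precisely the exponent bookkeeping in (b): one must verify that, among all intermediate exponents, it is the $L_1$-endpoint of $g$ that saturates first, that the saturating input $x=r-\phi$ is itself reachable from $1/p$ under the continuum of Young inequalities available from $k\in L_{q-\eps}\cap L_q$ (so that the scheme closes up), and that the limiting value $(p'-1)/q$ is attained in the closed sense rather than merely approached. Part (a) is comparatively routine once $A>1$ is in hand, the only care being that the intermediate convolutions remain in valid $L_s$ spaces until the H\"older regime is reached.
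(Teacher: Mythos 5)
Your proposal is correct and takes essentially the same route as the paper's proof: both bootstrap the Euler--Lagrange identity $f=Bf$ around the cycle $f\to k*f\to g\to k*g\to f$ via Young's inequality, with the same expansion factor $(p'-1)(r'-1)=(r'/r)(p'/p)>1$ and the same terminal dichotomy at the $L_1$-endpoint of $g$ in part (b), which yields the identical $p_\#$. Your repackaging as an affine recursion on reciprocal exponents with an unstable fixed point at $1/p$ is a cosmetic (if tidy) reformulation of the paper's iteration $\mu\mapsto M\mu$, and the bookkeeping points you flag do all check out.
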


\begin{proof}

The only information we need is that $f$ satisfies the equation $Bf=f$ (see Subsection~\ref{ssec:necessary-cond-extremum}).
Put $g=B^p_r f$ (in notation of Subsection~\ref{ssec:improving}).

\smallskip
(a) Suppose that $f\in L_s$ for $1/p-\mu\leq 1/s\leq 1/p$
with some $\mu\in[0,1/p]$. 
The identity
$$
\frac{1}{s}+\frac{1}{q}-1=\frac{1}{r'}+\left(\frac{1}{s}-\frac{1}{p}\right)
$$
shows that $k*f\in L_u$ if $\max(0,\,1/r'-\mu)\leq 1/u\leq 1/r'$. Therefore,
$g\in L_t$ if $\max(0,\,1/r-\nu)\leq 1/t\leq 1/r$,
where $\nu=(r'/r)\mu$.

Note that $\mu=1/p$ implies $\nu=(r'/p)/r>1/r$, so that $f\in L_\infty$ implies $g\in L_\infty$.

Since $f=Bf=B^r_p g$, we have similarly: if  $g\in L_t$ for $1/r-\nu \leq 1/t\leq 1/r$, $\nu\in[0,1/r]$, then
$f\in L_s$ for $\max(0,\, 1/p-\kappa)\leq 1/s\leq 1/p$, where $\kappa=(p'/p)\nu$. Also $g\in L_\infty$ implies
$f\in L_\infty$. 

\smallskip
Combining the above said, we conclude: if  $f\in L_p\cap L_P$, $P>p$,
and $\mu=1/p-1/P$, then $f\in L_p\cap L_{\tilde P}$, where
either $\tilde P=\infty$, or $(r'/r)p'\mu<1$ and
$1/p-1/\tilde P=M\mu$, where $M=(r'/r)(p'/p)$.    
Since $r'/p>1$ and $p'/r>1$, we have $M>1$.
Iterating, we get $f\in L_\infty$ in a finite number of steps.

\smallskip
The conclusion $f\in L_\infty$ is obtained under the assumpiton that
$f$ lies in $L_P$ with some $P>p$, and in the derivation we used just the inclusion $k\in L_q$. Let us now make use of the condition
$k\in L_{q+\eps}$, assuming only that $f\in L_p$. Put $\delta=1/q-1/(q+\eps)$.
Interchanging the roles of $f$ и $k$ at the first half-step of the iteration
(where we estimate the exponent of the space containing $k*f$),
we conclude that $f=Bf\in L_P$, where $1/P=\max(0,\,1/p-M\delta)$.
If $P\neq\infty$, we apply the above described iteration with initial value of parameter $\mu=M\delta$.

\smallskip
(b) Repeating the argument of part (a), we obtain: if $f\in L_s$
for $1/p\leq 1/s\leq 1/p+\mu$, then $g\in L_t$ for
$1/r\leq 1/t\leq \min(1,\,1/r+\nu)$,  $\nu=(r'/r)\mu$. Symmetrically,
 if $g\in L_t$
 for $1/r\leq 1/t\leq 1/t+\nu$, then $f\in L_s$ for
 $1/p\leq 1/s\leq \min(1,\,1/p+\kappa)$,  $\kappa=(p'/p)\nu$.

The essential difference with part (a) is that the conditions 
$f\in L_1$ are $g\in L_1$ no longer equivalent. For instance, $g\in L_1$ 
means that $\nu=1/r'$.  The value 
$1/p+\kappa=1/p+p'/(pr')$
can happen to be less than $1$.

With this remark in mind, we parallel the proof
of part (a). The condition $k\in L_{q-\eps}$ implies $f\in L_{P}$
with some $P<p$. Putting $\nu=1/P-1/p$ and $M=(p'r')/(pr)>1$,
we obtain at an iteration step: either (i) $g\in L_1$, or 
(ii) $f\in L_{1}$, or (iii) $f\in L_{\tilde P}$, 
where $1/\tilde P=1/p+M\mu<1$.
In the case (iii) we continue to iterate. Eventually, in a finite number of steps one of the cases (i) or (ii) occurs. 

The exponent $p_\#$ in the terminal case (i) is determined above: 
$1/p_\#=\min(1,\;1/p+p'/(pr'))$. 
The calculation
$$
1-\left(\frac{1}{p}+\frac{p'}{pr'}\right)=
  \frac{1}{p'}+\frac{1}{r'}-\frac{p'}{r'}=\frac{1}{q}-\frac{p'}{r'}
$$
shows that the condition $p_\#>1$ is equivalent to the inequality $r'/p'>q$.
\end{proof}

\begin{remark}
The asymmetry of the result ($f\in L_\infty$ being a ``more common'' property than $f\in L_1$) 
is ultimately due to the fact that convolution 
inherits best local properties of the two its operands, but worst global properties.
\end{remark}

\subsection{On the lower bound of convolution operators' norms}
\label{ssec:norm-lower-bound}

\begin{propos}
\label{prop:zero_lower_bound}
Let  $q\ge 1$, $1<p,r<\infty$ and $1/p+1/q+1/r=2$.
Let $k\in L_q(\RR^d)$ be a nonnegative function.  The operator of convolution with complex-valued kernel $k(x)e^{i\phi(x)}$ acting from $L_p$
to $L_{r'}$ can have arbitrarily small norm. Specifically,
$\|K_{k(x)\exp(i\lambda\|x\|^2)}\|_{p,r}\to 0$ as $\lambda\to\infty$.
\end{propos}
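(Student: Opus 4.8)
The plan is to reduce to a smooth compactly supported kernel and then extract the decay from the oscillation of the chirp $e^{i\lambda\norm{x}^2}$ through its Fourier transform. Write $k_\lambda(x)=k(x)e^{i\lambda\norm{x}^2}$, so that the operator in question is $K_\lambda=K_{k_\lambda}$ and the goal is $\norm{K_\lambda}_{p,r}\to 0$.

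\emph{Reduction to a nice kernel.} Young's inequality gives $\norm{K_\lambda}_{p,r}\le\norm{k_\lambda}_q=\norm{k}_q$ uniformly in $\lambda$, and applied to a difference it gives, for any $\tilde k\in L_q$,
$$
\norm{K_{k_\lambda}-K_{\tilde k_\lambda}}_{p,r}\le\norm{k_\lambda-\tilde k_\lambda}_q=\norm{k-\tilde k}_q,
$$
where $\tilde k_\lambda(x)=\tilde k(x)e^{i\lambda\norm{x}^2}$ and the last equality holds since $\abr{e^{i\lambda\norm{x}^2}}=1$. Here $q<\infty$ (the value $q=\infty$ would force $p=r=1$, which is excluded), so $C_c^\infty$ is dense in $L_q$; choosing $\tilde k$ with $\norm{k-\tilde k}_q<\eps$ yields $\norm{K_\lambda}_{p,r}\le\norm{K_{\tilde k_\lambda}}_{p,r}+\eps$ for all $\lambda$. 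It therefore suffices to prove $\norm{K_{\tilde k_\lambda}}_{p,r}\to 0$ for $\tilde k\in C_c^\infty$.

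\emph{Decay of the $L_2\to L_2$ norm.} For a convolution operator this norm equals $\norm{\widehat{\tilde k_\lambda}}_\infty$. The key point is that the Fourier transform of the Gaussian chirp is a function of constant modulus: $\abr{\Ft[e^{i\lambda\norm{\cdot}^2}](\xi)}=(\pi/\lambda)^{d/2}$ (a Fresnel computation). Since $\widehat{\tilde k_\lambda}=(2\pi)^{-d}\,\widehat{\tilde k}*\Ft[e^{i\lambda\norm{\cdot}^2}]$ with $\widehat{\tilde k}\in L_1$, Young's inequality gives
$$
\norm{\widehat{\tilde k_\lambda}}_\infty\le (2\pi)^{-d}(\pi/\lambda)^{d/2}\norm{\widehat{\tilde k}}_1\to 0\quad\text{as }\lambda\to\infty.
$$

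\emph{Interpolation.} The operator $K_{\tilde k_\lambda}$ is bounded uniformly in $\lambda$ from $L_a$ to $L_b$ whenever $1\le a\le b\le\infty$, with $\norm{K_{\tilde k_\lambda}}_{L_a\to L_b}\le\norm{\tilde k}_c$ and $1/c=1+1/b-1/a$ (Young). From \eqref{pqr} one has $1<p\le r'<\infty$, i.e.\ $0<1/r'\le 1/p<1$, and an elementary check shows there exist $\theta\in(0,1)$ and exponents $1\le a\le b\le\infty$ with $(1/p,1/r')=(1-\theta)(1/2,1/2)+\theta(1/a,1/b)$. Riesz--Thorin interpolation between the vanishing $L_2\to L_2$ bound and the uniform $L_a\to L_b$ bound then gives
$$
\norm{K_{\tilde k_\lambda}}_{p,r}\le\norm{\widehat{\tilde k_\lambda}}_\infty^{\,1-\theta}\,\norm{\tilde k}_c^{\,\theta}\to 0,
$$
since $1-\theta>0$. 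Together with the reduction this proves the Proposition. The main obstacle is the middle step: obtaining decay of the $L_2\to L_2$ norm \emph{uniformly in the frequency}, which rests on the explicit constant-modulus Fourier transform of the chirp; the reduction and the interpolation are routine once $1<p\le r'<\infty$ is recorded.
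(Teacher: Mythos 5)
Your proof is correct and follows essentially the same three-step route as the paper: a density reduction of the kernel, decay of the $L_2\to L_2$ norm from the constant-modulus $(\pi/\lambda)^{d/2}$ Fourier transform of the chirp (the paper's Lemma~\ref{lemma:zero_lower_bound_l2}, derived there by analytic continuation of the Gaussian Plancherel identity rather than the convolution theorem), and Riesz--Thorin interpolation against a uniform Young endpoint. The only cosmetic difference is that the paper fixes the second endpoint as $L_s\to L_\infty$ after possibly transposing, whereas you allow a general admissible pair $(a,b)$.
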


\begin{proof}
It is easy to see that the set of functions $k\in L_q$ for which the statement if true is closed in $L_q$. Therefore without loss of generality we may assume that
$k\in L_1\cap L_\infty$. 

Denote $K_\lambda$ the operator of convolution with function $k_\lambda(x)=k(x) \exp(i\lambda\|x\|^2)$. 

In the case $q=1$, $p=r=2$ the validity of the claim of the Proposition is established below, in Lemma~\ref{lemma:zero_lower_bound_l2}. 
The general case follows from this particular one by an interpolation argument as follows.

\smallskip
Suppose $p\ge r$ (otherwise one considers the transposed operator). 
On the coordinate plane, let us pass a line through
the points
$A=(1/2, 1/2)$ и $B=(1/p,1/r')$. 
Let $C=(\xi,0)$ be the point where it meets the horizontal axis. Due to the inequalities $1/r'<1/p\le 1-1/r'$
we have $0<\xi\le 1$. Put $\xi=1/s$, $s\ge 1$. The fact that $B\in[AC)$ can be written as
$$
 \frac{1}{p}=\frac{1-\theta}{s}+\frac{\theta}{2},
 \qquad
 \frac{1}{r'}=\frac{1-\theta}{\infty}+\frac{\theta}{2},
$$
where $0<\theta\le 1$.

Given $\eps>0$,  
Lemma~\ref{lemma:zero_lower_bound_l2} tells us that 
$\|K_\lambda\|_{2,2}\le\eps$ for a large enough $\lambda$. 
On the other hand, due to the assumption we made at the beginning of the proof, we have $k\in L_{s'}$.
By H\"older's inequality,
$$
 \|K_\lambda f\|_\infty\le \|f\|_s \|k\|_{s'}.
$$ 
Applying now the Riesz-Thorin theorem, we conclude that
$$
 \|K_\lambda\|_{p,r}\le  \|k\|_{s'}^{1-\theta}  \eps^{\theta}.
$$
The proposition is proved.
\end{proof}

\begin{lemma}
\label{lemma:zero_lower_bound_l2}
Let $k\in L_1(\RR^d)$.  Denote $k_\lambda(x)=k(x)\exp(i\lambda \|x\|^2)$ and 
$$
\hat k_\lambda(\xi)=\int k_\lambda(x) e^{-i(x,\xi)}\,dx,
$$
the Fourier transform of $k_\lambda$. Then
$\|\hat k_\lambda\|_\infty\to 0$ as $|\lambda|\to \infty$.

Consequently, the norm of the convolution with $k_\lambda$ as an operator in $L_2(\RR^d)$ tends to $0$ as $|\lambda|\to\infty$.
\end{lemma}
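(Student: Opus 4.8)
The plan is to establish the decay $\norm{\hat k_\lambda}_\infty\to 0$ by reducing to a dense subclass of $L_1$ on which the oscillatory integral can be handled explicitly, and then to read off the operator-norm statement from Plancherel's theorem.

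First I would record the elementary uniform bound $\abr{\hat k_\lambda(\xi)}\le\norm{k_\lambda}_1=\norm{k}_1$, valid for all $\xi$ and all $\lambda$ (since $\abr{k_\lambda}=\abr{k}$). As $k\mapsto \hat k_\lambda$ is linear, for any two kernels $k,\tilde k\in L_1$ one has $\sup_\xi\abr{\hat k_\lambda(\xi)-\hat{\tilde k}_\lambda(\xi)}\le\norm{k-\tilde k}_1$, \emph{uniformly in} $\lambda$. Consequently the set of $k\in L_1$ for which $\norm{\hat k_\lambda}_\infty\to 0$ as $\abr{\lambda}\to\infty$ is closed in $L_1$, so it suffices to prove the claim on any dense subset.

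As that dense subset I would take finite linear combinations of indicator functions of coordinate boxes $\prod_{j=1}^d[s_j,t_j]$ (step functions), which are dense in $L_1(\RR^d)$. For a single box the integral defining $\hat k_\lambda$ factorizes into one-dimensional integrals,
$$
 \hat k_\lambda(\xi)=\prod_{j=1}^d\int_{s_j}^{t_j} e^{i\lambda x^2-i x\xi_j}\,dx,
$$
so everything reduces to a uniform bound for each one-dimensional factor. Completing the square with $c=\xi_j/(2\lambda)$ gives
$$
 \int_{s_j}^{t_j} e^{i\lambda x^2-i x\xi_j}\,dx
 =e^{-i\xi_j^2/(4\lambda)}\int_{s_j}^{t_j} e^{i\lambda(x-c)^2}\,dx,
$$
and after the substitution $u=\sqrt{\abr{\lambda}}\,(x-c)$ the modulus of the right-hand side equals $\abr{\lambda}^{-1/2}$ times a Fresnel integral $\int_A^B e^{\pm i u^2}\,du$. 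The key point is that, since $\int_0^\infty e^{iu^2}\,du$ converges, the partial Fresnel integrals are bounded by an absolute constant \emph{uniformly} in the limits $A,B$ (hence uniformly in $\xi_j$ and in the endpoints). Thus each factor is $\le C\abr{\lambda}^{-1/2}$, a box contributes $\le (C\abr{\lambda}^{-1/2})^d$, and by linearity $\norm{\hat k_\lambda}_\infty\to 0$ for every step function; the closedness argument then extends this to all $k\in L_1$.

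For the ``consequently'' statement I would invoke Plancherel: convolution with $k_\lambda\in L_1$ is the Fourier multiplier operator with symbol $\hat k_\lambda$, whose $L_2$-operator norm equals $\norm{\hat k_\lambda}_\infty$, so $\norm{K_{k_\lambda}}_{2,2}=\norm{\hat k_\lambda}_\infty\to 0$. The main obstacle is precisely the uniform-in-$\xi$ control of the oscillatory integral: the stationary point $x=\xi/(2\lambda)$ of the phase drifts with $\xi$, so naive integration by parts does not yield a bound independent of $\xi$; completing the square and using the uniform boundedness of Fresnel integrals is the device that circumvents this.
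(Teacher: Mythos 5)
Your proof is correct, and its technical core differs from the paper's. Both arguments share the same outer reduction: the map $k\mapsto\hat k_\lambda$ is a contraction from $L_1$ to $L_\infty$ uniformly in $\lambda$, so the set of kernels for which the conclusion holds is closed in $L_1$ and one only needs a dense subclass. From there the paper reduces to $k\in C_0^\infty$ and obtains the quantitative bound $\|\hat k_\lambda\|_\infty\le C|\lambda|^{-d/2}\|\hat k\|_1$ by analytically continuing the Gaussian Plancherel identity $\int\phi(x)e^{-z\|x\|^2}\,dx=Cz^{-d/2}\int\hat\phi(\xi)e^{-\|\xi\|^2/(4z)}\,d\xi$ from $\Re z>0$ to the boundary point $z=-i\lambda$; the price is that the estimate involves $\|\hat\phi\|_1$, which is why smoothness of the dense class is needed. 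You instead take indicators of coordinate boxes as the dense class, factor the integral into one-dimensional pieces, complete the square, and use the uniform boundedness of partial Fresnel integrals $\int_A^B e^{\pm iu^2}\,du$ to get the same rate $(C|\lambda|^{-1/2})^d$ per box, uniformly in $\xi$. Your route is more elementary (no distributional Fourier transform of the complex Gaussian, no continuity-up-to-the-boundary argument) at the cost of being tied to the product structure of boxes; the paper's route gives the clean closed-form constant $C|\lambda|^{-d/2}\|\hat k\|_1$ for any Schwartz-class kernel in one stroke. The Plancherel step for the operator-norm consequence is the same in both.
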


\begin{proof}
By a density argument, it suffices to prove the Lemma under the assumption $k(x)\in C_0^\infty(\RR)$.

For $\Re z> 0$ we have the Plancherel identity 
$$
\int \phi(x)e^{-z\|x\|^2}\,dx=Cz^{-d/2}\int \hat\phi(\xi) e^{-\|\xi\|^2/(4z)}\,d\xi.
$$ 
Both sides are defined and continuous in the region $\Re z\geq 0$, $z\neq 0$. 
Therefore the equality extends to the boundary 
$z=-i\lambda$, $\lambda\in\RR\setminus\{0\}$. Thus,
$$
\left|\int \phi(x) e^{i\lambda\|x\|^2}\right|\leq C|\lambda|^{-d/2}\|\hat\phi\|_1.
$$
Putting $\phi(x)=k(x)e^{-i(x,\eta)}$, we obtain$|\hat k_\lambda(\eta)|$ in the left-hand side of the latter inequality, while
$\hat\phi(\xi)=\hat k(\xi+\eta)$, so that $\|\hat\phi\|_1=\|\hat k\|_1$. 
The esimate
$$
 \|\hat k_\lambda\|_\infty \leq C|\lambda|^{-d/2}\|\hat k\|_1
$$
follows and the proof is complete.
\end{proof}

\section{Best constants in the Hausdorff-Young inequality for the Laplace transform on \texorpdfstring{$(0,+\infty)$}{Rplus}}
\label{sec:Laplace-norm}

Denote by $\Lt$ the Laplace transform on $\RR_+$,
$$
 f\;\mapsto\; \Lt f(x)=\int_0^\infty e^{-xt} f(t)\,dt,
$$
and by $\Ft$ the Fourier transform on $\RR$, 
$$
 f\;\mapsto\; \Ft f(x)=\int_{-\infty}^\infty e^{-ixt} f(t)\,dt.
$$
 For  $1\le p\le 2$, the Hausdorff-Young (HY) inequalities 
$$
 \|\Ft f\|_{p'}\le C^{\Ft}_p \|f\|_p
$$
and
$$
 \|\Lt f\|_{p'}\le C^{\Lt}_p \|f\|_p
$$
hold.
They are first established under the assumption $f\in L_1\cap L_{\infty}$, when the integral definitions of $\Lt f$ and $\Ft f$ have direct meaning, and then
they are used to extend $\Lt$ and $\Ft$ by continuity to
the operators acting from $L_p$ to $L_{p'}$.   

The exponent $p'$ in the left-hand sides of the HY inequalities cannot be replaced by any other number. 
This follows from ``dimensional analysis'', that is, 
changing $f(t)$ into the function $f_a(t)=a^{1/p} f(at)$ with the same $L_p$ norm, where $a>0$ is an arbitrary scaling parameter.
It is also known that inequalities of this type do not hold when $p>2$. 
In the case of Fourier transform, an explicit argument to that effect can be found, e.g., in Titchmarsch's monograph \cite[\S~4.11]{Titchmarsch1946}.

The optimal values of the constants $C^{\Ft}_p$, that is, the operator norms $\|\Ft\|_{p\to p'}$, have been found by W.\,Beckner \cite{Beckner1975b}
(and earlier by K.I.\ Babenko \cite{Babenko1961} in the case $p'/2\in\ZZ$):
\beq{Beckner-FT}
 \|\Ft\|_{p\to p'}=(2\pi)^{1/p'} A_p,
 \eeq
where the constant $A_p$ is defined in \eqref{beckC}.

Analytical expressions for the optimal values of the constants $C^{\Lt}_p$, that is, the operator norms $N(p)=\|\Lt\|_{p\to p'}$, are unknown. 
The problem of determining $N(p)$ is equivalent to the problem of determining the norm of the convolution operator with kernel $h_p(\cdot)$,
see \eqref{LT-conv-kernel} below, acting from $L_p(\RR)$ to $L_{p'}(\RR)$.

In Figure~\ref{fig:LT-norm} and in Table~\ref{tab:LT-norm} we  present the numerical values of $N(p)$. 
In order to mark the distiction between the true value of $N(p)$ and the computed approximation to it, we designate the latter as $\numN(p)$. The numerical method used is briefly outlined at the end of this Section. 

\smallskip
The minimum of the norm occurs at $p\approx 1.1307$, 
\beq{minLaplaceNorm}
\min N(p)\approx  0.881970846.
\eeq

\begin{figure}
\centerline{
\begin{picture}(330,200)
\put(0,-10){
\includegraphics[width=0.9\textwidth]{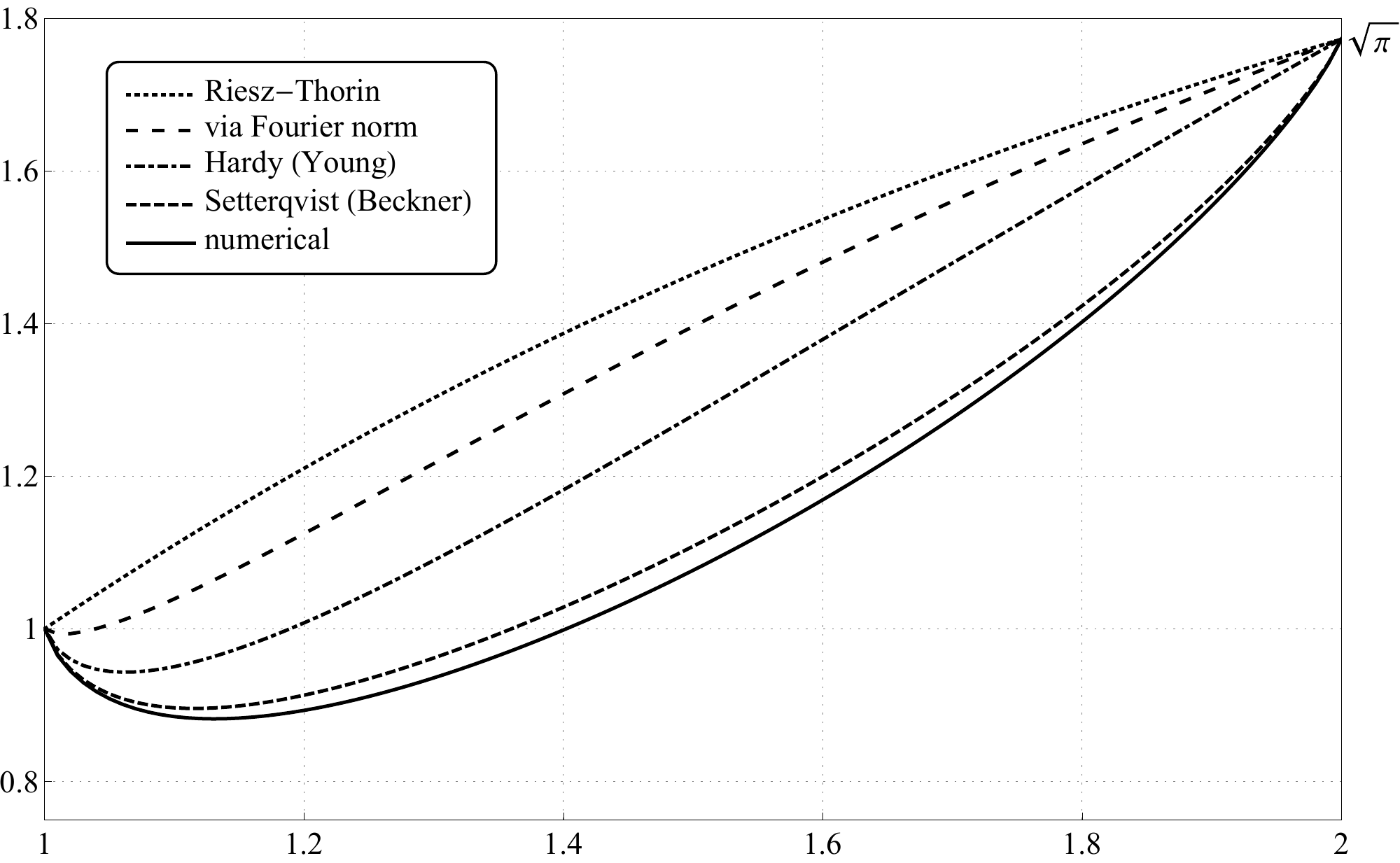} 
}
%
\end{picture}
} 
\caption{The norms $\numN(p)\approx\|\Lt\|_{p\to p'}$ found numerically in comparison with the analytical estimates \eqref{RTLp}, \eqref{FLp}, \eqref{HardyLp} и \eqref{EricLp}}
\label{fig:LT-norm}
\end{figure}


\noindent
{
 \def\arraystretch{1.2}%
\begin{table}
	\vspace{1em}
	\centerline{
		\setlength\tabcolsep{0.5\tabcolsep}
		\scriptsize
		\begin{tabular}{c|c|c|c|c|c|c|c|c|c|c}
			$p$ & $1.05$ & $1.1$ & $1.2$ & $1.3$ & $1.4$ & $1.5$ & $1.6$ & $1.7$ & $1.8$ & $1.9$  \\ 
			\hline
			$\numN(p)$ & $0.90835$ & $0.88495$ & $0.89306$ & $0.93562$ & $0.99833$ & $1.07652$ & $1.16890$ & $1.27631$ & $1.40193$ & $1.55390$ \\ 
			\hline
			$C_{SB}(p)$ & $0.91459$ & $0.89640$ & $0.91296$ & $0.96169$ & $1.02830$ & $1.10803$ & $1.19953$ & $1.30354$ & $1.42310$ & $1.56616$ \\ 
		\end{tabular}
	}
	\caption{The norms $\numN(p)\approx\|\Lt\|_{p\to p'}$ found numerically in comparison with Setterqvist's estimate \eqref{EricLp}}
	\label{tab:LT-norm}
	\vspace{1em}
\end{table}
}


\vspace{-2\medskipamount}
The curves in Fig.~\ref{fig:LT-norm} present the numerically evaluated norms $\|\Lt\|_{p,p'}$ and several analytical estimates for the norms, which we describe below.

\smallskip
1. The simplest estimate is obtained by interpolation. The equality $N(1)=1$ is immediate and the equality $N(2)=\sqrt{\pi}$ is readily obtained as the supremum of the spectrum of the self-adjoint operator $\Lt$ in $L^2(\Rp)$. The Riesz-Thorin interpolation theorem yields the estimate%
\footnote{The constants are subscripted in accordance with: RT=Riesz-Thorin, F=via Fourier norm, H=Hardy, S=Setterqvist.} 
\beq{RTLp}
 N(p)\le C_{RT}(p)= \pi^{1/p'}, \quad 1\le p\le 2.
\eeq

\smallskip
2. One can show that
\beq{FtLt}
N(p)\le 2^{-1/p'} \|\Ft\|_{p\to p'}.
\eeq
Using the Hausdorff-Young estimate $\|\Ft\|_{p\to p'}\le (2\pi)^{1/p'}$ in the right-hand side we come again to the estimate \eqref{RTLp}, but one can instead use Beckner's sharp constant. As a result, one gets a better estimate,
\beq{FLp}
 N(p)\le C_{F}=\pi^{1/p'} A_p.
\eeq
 
Let us comment on the inequality \eqref{FtLt}.
Consider the family of operators $T_z:\; L_p(\RR_+)\to L_{p'}(\RR_+)$, depending on complex parameter $z$,
$$
 T_z:\; f\mapsto T_z f(x)=\int_0^\infty e^{-\Phi(z) xy}\,f(y)\,dy,
 \qquad \Phi(z)=e^{iz\frac{\pi}2}.
$$
The analytic operator-valued function $z\mapsto T_z$ is defined in the strip $|\Re z|\le 1$ and its values at $z=\pm 1$ are
the composition of the Fourier transform with restrictions onto the negative, resp., positive real half-line. The value $T_z$ at $z=0$ is but the Laplace transform. The inequality in question follows by applying Stein's interpolation theorem \cite[Ch.~5, Theorem~4.2]{SteinWeiss_1974}.
Using this approach, the second author and A.E.~Merzon have obtained a variant of the HY inequality for the Laplace transform with variable $y$ on a ray in the complex plane (unpublished). 

\smallskip
3. The substitutions
$$
x=e^y, \qquad t=e^s, \qquad F(y)=f(e^y) e^{y/p},
\qquad 
G(s)=(\Lt f)(e^s)\,e^{s/p'},
$$ 
reduce the Laplace transform to the convolution operator 
$$
 F(y)\;\mapsto\; G(s)=\int_{\RR} h_p(y+s)\,F(y)\,dy=(h_p*\tilde F) (s), 
$$
where
\beq{LT-conv-kernel}
 h_p(y)=e^{y/p'-e^{y}}.
\eeq
It is  easy to see that the $L_p$-norms of the functions $f$ (defined on $\Rp$) and $F$ (defined on $\RR$) coincide;
the same is true for the $L_{p'}$-norms of the functions $\Lt f$ and $G$.
Therefore, $N(p)$ is the norm of the convolution operator with kernel $h_p(y)$ acting from $L_p(\RR)$ to $L_{p'}(\RR)$.  
Since $\|h_p\|_{q}^{q}=\sqrt{2\pi/p'}$ (here $q=p'/2$), the Young inequality yields the estimate 
\beq{HardyLp}
N(p)\le C_{H}(p)=\rbr{\frac{2\pi}{p'}}^{1/p'}.
\eeq 
G.H.\,Hardy \cite{Hardy1933} was the first to
derive this estimate in 1933, using the method just outlined. 

\smallskip
4. Combining Hardy's reduction with case $d=1$ of Beckner's sharp form \eqref{beckC} of Young's inequality, Setterqvist \cite[Theorem~2.2]{Setterqvist2005} obtained the estimate
\beq{EricLp}
N(p)\le C_{S}(p)=\left(\pi(p-1)\right)^{1/p'}\,\left(p(2-p)\right)^{1/p-1/2}
=C_H(p) \cdot A_p^2\cdot A_q.
\eeq

The maximum relative error of the estimate \eqref{EricLp} is about $3\%$. 
%
The following empirical approximation has absolute error less $10^{-3}$: 
$$
 C_S(p)-N(p)\approx \frac{(p-1)(2-p)}{8}.
$$

\subsection*{The numerical method}

In notation of Section~\ref{ssec:improving}, we have the equation $B_p^p \tilde f=f$ to solve. Its solution (if it exists) is also a solution of the equation $Bf=f$, which any maximizer must satisfy. In practice, we solve the equation $B_p^p f(x)=f(b-x)$, where both the function $f$ and the shift parameter $b$ are to be determined. 
We employ the direct iteration method defined by
\beq{iterations}
 f_n(x)=  (B^p_{p} f_{n-1})(b_n-x).
\eeq 
The shift parameter $b_n$ at each step is determined by the condition
$$
  \max_x f_n(x)=f_n(0)=1.
$$ 
The recurrence \eqref{iterations} implies that $\|f_n\|_p=1$ for all $n>1$. 
Due to Lemma~\ref{lemma:norm-increase}, the sequence of norms $\|k*f_n\|_{p'}$ is nondecreasing; due to the Young inequality, it is bounded; hence a limit exists.
The computation is stopped when $\|k*f_n\|_{p'}^{p'}-\|k*f_{n-1}\|_{p'}^{p'}<\eps$, where $\eps$ is the given tolerance.  We chose this criterion because we are not concerned (here) with computation of the solution $f$.

\smallskip
 The error of the numerical method has two sources besides the machine arithmetics and finiteness of the number of iterations.

(I) Domain compactification: the line $\RR$ is replaced by a finite interval $I=[-a,a]$ and the convolution on $\RR$ is replaced by the cyclic convolution on $I$.

(II)  Discretization: the functions of continuous variable are replaced by the functions of a discrete parameter.
We use the uniform grid with $N=2^k$ nodes.

\smallskip 
As we have noted in Remark~\ref{rem:symmetricB}, the existence of solution of the equation $B_p^p \tilde f=f$ has not been proved. This is not an important issue though: one can follow even-numbered iterations, since $\tilde B^2=B$ and the existence of solution of the equation $Bf=f$ is known.

The essential gaps in the justification of our numerical method
are the following: 

(a) a proof of convergence of the iterations \eqref{iterations} в $L_p(\RR)$ (even of the even-numbered iterations) is lacking;

(b) there is no result on uniqueness (up to a shift) of solution of the equation $Bf=f$, which means that the limit
$\lim_{n\to\infty} \|k*f_n\|_{p'}$ may in principle depend
on the initial condition.

\smallskip 
In practice, for a fixed compactification we observed a geometric convergence of the norms
$\|k*f_n\|_{p'}$
(and, moreover, a geometric convergence of $(f_n)$ in norm), the faster the closer $p$ is to $1$. The limit function
appears to be the same for different initial conditions. (We tried the initial conditions being either the Gaussians with various dispersions or the indicator functions of intervals.)  
In order to control the accuracy of the results, we performed computations with doubling of the number of nodes until stabilization. (In most cases, $N=512$ nodes were sufficient.) 
 
As regards the compactification, the Young inequality and the triangle inequality provide an upper bound for the error of the computed norm $\|K_k\|_{p,p}$ when the support of the kernel $k$ gets truncated. It is also easy to estimate the error due to the use of the cyclic convolution instead of the convolution on $\RR$. Contrary to the situation with convergence of iterations, the compactification appears to run into trouble as $p$ approaches 1, as the convolution kernel \eqref{LT-conv-kernel} becomes slowly decreasing in the negative directon. For instance, $h_{1.05}(y)\approx e^{-|y|/20}$ for $y<0$. However, when truncating the support of the function $h_p$, we are concerned not with absolute values of the cut-out, but with its $L_q$-norm. Since $q=p'/2$ and $h_p(y)^{q}=e^{y/2-(p'/2)\exp y}$,
the truncation parameter can be set uniformly in $p$. Computations support these considerations. For the purpose of control we used the doubling of the support $[-L,L]$ of the truncation of $h_p$ (and the corresponding doubling of the length of the circumference obtained by identifying the ends of the interval).   

\smallskip
All significant digits of the numerical data presented in Table~\ref{tab:LT-norm} and in the formula \eqref{minLaplaceNorm} are found to be stable with respect to the described operations of parameters doubling.  
 
\section{Open questions}
\label{sec:open-problems} 

\begin{opquest}
	\label{opq:uniqueness}
	Let $k\in L_q(\RR^d)$ and $f_1$, $f_2$ be two maximizers for
	the operator $K_k$ from $L_p$ to $L_{r'}$. 
	Is it true that there exist $\theta\in\RR$ and
	a vector $a\in\RR^d$ such that  $f_2(x)=e^{i\theta}f_1(x-a)$?
\end{opquest}
 
Having posed (and solved) the question of the existence of a minimizer, it is natural to ask about its uniqueness up to the trivial transformations. We suppose that in general there is no uniqueness. It looks probable however that one can formulate
conditions sufficient for uniqueness and embracing some narrow but meaningful class of functions $k$ (positive? unimodular?\dots). Exploring a finite-dimensional analog --- the convolution on $\ZZ/m\ZZ$ --- might help to understand what effects one should anticipate.  

In the case non-uniqueness is revealed, a number of further
questions can be asked, concerning non-maximizer
solutions of the equation $Bf=f$, bifurcation phenomena,
Morse indices etc. 

\begin{opquest}
\label{opq:q1p_not2}
Let $1<p<\infty$, $p\neq 2$. Does there exist a nonzero kernel
$k\in L_1$, for which the convolution operator $K_k:\,L_p\to L_p$ possesses a maximizer?
\end{opquest}

The affirmative answer in the case $p=2$ is given in Subsection~\ref{ssec:limit_cases} (subcase $\mathrm{I(B)_2}$).

\begin{opquest}
\label{opq:compact-perturbation}
	Let $k\in L_q$ and $V$ be a compact operator from $L_p$ to $L_{r'}$.
	Is it true that a maximizer for the operator $K_k+V$ exists? 
	Is this true in the particular case when $V$ is a rank one operator?
\end{opquest}

An answer to this question would yield either an extension of the class of admissible integral kernels in Theorem~\ref{thm:minimizer_exists} or an yet another counterexample,
in addition to the one given in Subsection~\ref{ssec:counterex-perturbation}, stressing the role of translation-invariance towards the existence of a maximizer. 

\begin{opquest}
	\label{opq:loc-compact}
	Generalize Theorem~\ref{thm:minimizer_exists} to embrace a certain class of locally compact groups (in particular, a (sub)class of discrete finitely generated groups).
\end{opquest}

The proof of Theorem~\ref{thm:minimizer_exists}
goes through in $\ZZ^d$ with trivial modifications.%
\footnote{The situation in the limit cases differs
between $\RR^d$ and $\ZZ^d$, cf.\ end of comment~\ereftxt{com:Tao-toy-problem}.}
One can try to get a clue about the case of discrete non-commutative groups by studying convolution on the free groups with two generators. One should be aware of the fact that
the condition \eqref{pqr} on the exponents in the Young inequality is, for a general locally-compact group, not necessary, cf.\ \cite{Quek-Yap_1983}. 

\begin{opquest}
\label{opq:maximizer_in_L1Linfty}
Investigate the local and global properties of a maximizer
as depending on the properties of the kernel $k$.
\end{opquest}

We have stated this question in a broad and imprecise form. 
Here are more specific sample questions, which we would be interested
to have answered.

\smallskip
{\sc Question}\ \arabic{opquest}A.
What is a guaranteed rate of decay of a maximizer
provided $k\in L_q$ has finite support? 

\smallskip
{\sc Question}\ \arabic{opquest}B.
What condition on $k$ (``a room of integrability'') beyond the assumed $k\in L_q$ is sufficient to guarantee that a maximizer lies in $L_\infty$?

\smallskip
According to Proposition~\ref{propos:maximizer_in_anyLp}, it is sufficient that $k\in L_{q+\eps}$ with arbitrarily small $\eps>0$. 
Isn't an ``inner room of integrability'' of the kernel $k$ (по with respect to $L_q$) and/or a maximizer (with respect to $L_p$) already sufficient? 
By an ``inner room of integrability'' we mean that, for example, $k$ belongs to some Orlicz space properly contained in $L_q$.
In the proof of Proposition~\ref{propos:maximizer_in_anyLp}, one can replace the reference to Young's inequality by the reference to O'Neil's inequality \cite{ONeil_1965} 
(concerning convoulution in Orlicz spaces), but it is unclear how far one can get with this approach.  

\begin{opquest} 
	\label{opq:dist-to-maximizer}
	{\bf Conjecture. } 
	For any $\eps>0$ there exists $\delta>0$ (depending on $q,p$ and the convolution kernel $k$)
	such that if $\|f\|_p=1$ and $\|k*f\|_{r'}> \|K\|_{p,r}(1-\delta)$, then $\inf_{g\in\mathfrak{M}_k}\|f-g\|_p<\eps$,
	where $\mathfrak{M}_k$ is the set of all maximizers for the operator $K$.
\end{opquest}

The formulation of the Conjecture parallels that of Proposition~\ref{propos:ineq-for-maximizer}, cf.\ comment \ereftxt{com:discrepancy}.

\begin{opquest}
	\label{opq:dist-to-Gaussians}
	Find a lower bound for the $L_p$-distance from the function $h_p$ defined by 
	\eqref{LT-conv-kernel} to the set of Gaussians, that is,
	estimate from below the quantity
	$$
	\delta_p=\inf_{r,b>0;m\in\RR}
	\norm{h_p(x)-re^{-b(x-m)^2}}_p.
	$$ 
\end{opquest}

This question is a step to the analytical improvement 
of the inequality \eqref{EricLp}: 
$$
 N(p)\le C_S(p)-c\delta_p\|h_p\|_q^3=C_S(p)-c\delta_p C_H(p)^3,
$$
where the constants $C_S(p)$ and $C_H(p)$ are from
\eqref{EricLp} and \eqref{HardyLp}, respectively, and
$c$ is the constant in the (one-dimensional) Christ inequality,
see comment \ereftxt{com:Christ} in Section~\ref{sec:comments}. 

\begin{opquest}
\label{opq:convergence-of-iterations}
Prove the convergence of the iterations \eqref{iterations}.
\end{opquest}
 
This question allows a broad interpretation (convergence of the iterations $f_{n+1}=Bf_n$ under some general assumptions), as well as a narrow interpretation: explain analytically why the iterations converge in the concrete situation of Section~\ref{sec:Laplace-norm}.

A potential non-uniqueness of solution of the equation $Bf=f$
(cf.\ Question~\ref{opq:uniqueness}) may call for certain adjustments of the question in its broad interpretation.

Note that in order to compute just the norm of the operator
$K_k$ (and not a maximizer) all that matters is not the convergence of iterations but exactly the absence of 
an extraneous solution $f_{wrong}$ with a small norm.

\section{Comments}
\label{sec:comments}

{\bf Section~\ref{sec:intro}}

\enote{com:Beckner} 
The inequality \eqref{beckC} has been proved independently and almost at the same time in \cite{Beckner1975b} and in
\cite{Brascamp-Lieb_1976}. See also the textbook \cite[Theorem~4.2]{Lieb-Loss_1998}. We note a simple proof given in \cite{Barthe_1997} 
(essintially based but on H\"older's inequality) and a particularly elegant proog in \cite{Carlen-Lieb-Loss_2004} (exploiting monotonicity of the trilinear form $(f_1*f_2,f_3)$ under heat equation evolution of the functions $f_i$).

A discussion of the Young inequality on locally compact groups with emphasis on admissible exponents and sharpness of the constants can be found in \cite{Fournier_1977, Quek-Yap_1983}.

\smallskip
\enote{com:Christ}
For review-style expositions of the results and methods of Christ's work \cite{Christ_2011,Christ_2014} we refer to \cite{Culiuc_2015}, \cite{Vitturi_2014}.  

The result particularly relevant to a possible improvement
of Setterqvist's estimate
\eqref{EricLp} is
\cite[Corollary~1.5]{Christ_2014}:

{\it Let $f_j\in L_j(\RR^d)$
($j=1,2,3$) and
$\|f_1\|_{p_1}=\|f_2\|_{p_2}=\|f_3\|_{p_3}=1$. 
Put
$C=(A_{p_1}A_{p_2}A_{p_3})^d$
(which is Beckner's constant in $\RR^d$)
and denote $\mathfrak{G}$ the set of all Gaussian functions,
$$
\mathfrak{G}=\left\{\phi(x)=r e^{-(Bx,x)+(a,x)}\right\}, 
$$
where
$r>0$, $a\in\RR^d$ and $B$ is a positive definite quadratic form.
There exists a constant $c>0$ (which depends on the dimension $d$) such that
$$
|(f_1*f_2,f_3)|\le C-\eps(\delta),
\qquad \eps(\delta)=c\delta^4,
$$
where 
$$
\delta=\inf_{g_1,g_2,g_3\in\mathfrak{G}}
\max_{j\in\{1,2,3\}} \|f_j-g_j\|_{p_j}.
$$
}

In order to use the  stated result for improvement of the estimate \eqref{EricLp}, one needs the numerical value of the constant $c$ (for $d=1$), which is not given in \cite{Christ_2014}, as well as
a lower estimate for the $L_p$-distance from the
kernel $h_p$ to the set of Gaussians. We offer the latter calcluation as an open question, see Question\ref{opq:dist-to-Gaussians} in Section~\ref{sec:open-problems}.

\smallskip
\enote{com:Lions-intro-problem}
In \cite[\S\,I.1]{Lions_1984a} the minimization problem for the functional of the form  
$$
\mathcal{E}(u)=\int_{\RR^n} e(x,Au(x))\,dx
$$
under the constraint  $\mathcal{J}(u)=\lambda$
is considered. 
Here
$$
\mathcal{J}(u)=\int_{\RR^n} j(x,Bu(x))\,dx;
$$
$e(\cdot,\cdot)$, $j(\cdot,\cdot)$ are given real-valued functions, $j\ge 0$; $u(\cdot)$ are elements of a given function space on $\RR^n$. Denote
$$
I_\lambda=\inf_{\mathcal{J}(u)=\lambda}\mathcal{E}(u). 
$$
A particular case with $x$-independent functions $e$, $j$
is referred to, in the general context, as ``problems at infinity''.

Our problem concerning the norm of the operator $K_k$
corresponds to 
$$
\mathcal{E}(u)=-\|u*k\|_{r'}^{r'},
\qquad 
\mathcal{J}(u)=\|u\|_p^p, 
$$
i.e.\ $e(u)=|u|^{r'}$, $Au=u*k$,
$Bu=u$, $j(u)=|u|^p$. The lower bound then is  
\beq{Lions-min-conv}
I_\lambda=-C\lambda^{\gamma},
\quad \gamma=\frac{r'}{p},\quad C=\|K_k\|_{p,r}^{r'}.
\eeq

What Lions' method provides is not a single general theorem but a general approach to proving the existence of extermizers in
a broad class of variational problems of analysis and mathematical physics.
It contains heuristic elements, so that details may vary and require a concrete, problem-specific approach.

\smallskip
The monograph \cite{Tintarev-Fieseler_2007} treats many aspects of the concentration compactness method, with emphasis on
convergence in Hilbert (Sobolev) spaces.
 A Russian-language reader may find Ch.~5 in the textbook \cite{Korpusov-Sveshnikov_2011} as a useful reference concerning Lions' method.

\smallskip
By all indications, it should be possible to prove  
Theorem~\ref{thm:minimizer_exists}\ in the framework of Lions' method; however, this would be a separate and not quite trivial project. Note that our proof neither refers to the Concentration Compactness Lemma \cite[Lemma\,I.1]{Lions_1984a}, \cite[Lemma~5.1]{Korpusov-Sveshnikov_2011} nor contains its close analog; the variants of ``vanishing'' and ``dichotomy'' are implicitly eliminated by other means. 

\smallskip
\enote{com:Tao-toy-problem}
In T.\,Tao's methodical article \cite[\S~1.6]{Tao_2009b}, a technique of ``profile decomposition''
is discussed: a ``profile'' (a function sequence) is  decomposed into a sum of shifts of fixed functions and a relatively compact sequence (cf.\ \cite[\S\,3.3, Theorem~3.1]{Tintarev-Fieseler_2007}). As an application, a ``toy theorem'' is proved, asserting that the discrete convolution operator acting from $\ell^1=L_1(\ZZ)$
to $\ell^p$ by the formula $(Kf)_n=f_n-f_{n-1}$ has a maximizer.  Note that in the corresponding case  II(A) of Subsection~\ref{ssec:limit_cases}
a maximizer in $\RR$ does not exist.  
%

\medskip
{\bf Section~\ref{sec:proof-general}}
%
%

\enote{com:def-tight} 
Some notions (the ``$\delta$-near'' ones) introduced in Definitions~\ref{def:delta-support}--\ref{def:concentration}
are there just to suit our local purposes, while other notions, with their origins in Probability Theory, have been used in different contexts. Among the latter, the term \emph{tight} is standard, cf.\ e.g.\ \cite[v.~2, \S\,8.6]{Bogachev_2003}. Let us comment on the remaining two.

%
%

\smallskip
1. The $\delta$-diameter introduced in Definition~\ref{def:delta-support} is L\'evy's dispersion function 
\cite[Section~1.1, Supplement~4]{HT_Concentration_functions_1980} in disguise. Specifically,
for a fixed $f\in L_p(\RR^d)$ with unit $p$-norm and a unit vector $v\in\RR^d$, we have the distribution function in the sense of probability theory
$$
F(t)=\int_{(v,x)<t} |f(x)|^p\,dx.
$$
The corresponding 
\emph{L\'evy concentration function}\ \cite{HT_Concentration_functions_1980} is
$$
Q_F(\lambda)=\sup_{t\in\RR} (F(t+\lambda)-F(t)).
$$ 
The inverse function is known as \emph{the dispersion function for the measure $dF$};
in our notation it is
$$
D^p_{\delta,v}(f)=\inf_{Q_F(\lambda)\ge 1-\delta} \lambda.
$$

2. We thought it useful to have a shorter name for the property
of a function sequence to be \emph{tight up to translations}; we call such a sequence \emph{relatively tight}. P.\,L.~Lions, in the formulation of his Concentration Compactness Lemma \cite[Lemma~I.1]{Lions_1984a}, chose to characterize the said property as the ``case of compactness'' rather than to devise a descriptive adjective.


\smallskip
\enote{com:delta-support}
We do not claim uniqueness of the $\delta$-near-support, 
cf.\ \cite[\S\,1.1.2]{HT_Concentration_functions_1980}.

\medskip
{\bf Section~\ref{sec:non-spreading}}

\enote{com:subadditivity_lemma}
The inequality of Lemma~\ref{lemma:u} is interpreted in Lions' theory as the subadditivity property (of crucial importance) of the fucntion $I_\lambda$, cf.\ \eqref{Lions-min-conv}. 
%
Note that the exponent $\gamma$ in the final application of Lemma~\ref{lemma:u} (see Lemma~\ref{lemma:concentration_property})
coincides with that in \eqref{Lions-min-conv}.

\smallskip
\enote{com:midpoint-average_lemma}
The function in the left-hand side of the inequality of Lemma~\ref{lemma:midpoint-average} is known as Steklov's averaging of the function $|f|$; the lemma states one of its most elementary properties.
The quantifiers can be swapped (``there exists $t_0$ such that for any $a$\dots'')
at the expense of putting an appropriate constant in the numerator of the right-hand side; this follows from the Hardy-Littlewood maximal inequality.

\smallskip
\enote{com:prevent_splitting}    
Perhaps, this place in our proof --- the reference to 
Lemma~\ref{lemma:h1+h2} in the proof of Lemma~\ref{lemma:controlled_support} --- most closely corresponds to 
Lions' thesis
``prevent the possible splitting of minimizing sequences by keeping them concentrated'' \cite[p.114]{Lions_1984a}, and also
reflects the ``asymptotic orthogonality'' phenomenon 
\cite{Tao_2009b}.



\medskip
{\bf Subsection~\ref{ssec:compact}}

\smallskip
\enote{com:compactness-lemma}
For general integral operators in $L_p$ spaces, sufficient conditions for compactness usually require some spare room in the space exponents as compared with sufficient conditions for boundedness, cf.\ e.g.\ 
\cite[Theorem~7.1]{Krasnoselski_1966}.%
\footnote{
Note that the usual notation $L_p$ (or $L^p$) corresponds to $L_{1/p}$ in \cite{Krasnoselski_1966}.}
As it is readily seen, there is no such ``spare room'' in the conditions of Lemma~\ref{lemma:converge-convolutions}.

A very general study of compositions of convolution and multiplication operators in Lebesgue spaces is found in the paper \cite{Busby-Smith_1981}.
Our Lemma~\ref{lemma:converge-convolutions} is a particular  case of Theorem~6.4 of \cite{Busby-Smith_1981}; however,
it seems easier to give an independent proof, as we did,
than to scrutinize involved notation and conditions.

\medskip
{\bf Subsection~\ref{ssec:discussion-survey}}

\enote{com:discrepancy} 
It is harder (likely, much harder) to prove, under the assumptions of Proposition~\ref{propos:ineq-for-maximizer}, the approximative property in the spirit of M.~Christ's results
mentioned in the comment \ereftxt{com:Christ}.
Cf.\ Question~\ref{opq:dist-to-maximizer} in Section~\ref{sec:open-problems}.

\smallskip
\enote{com:Lions-convergence}
One of general heuristic principles stated by P.\,L.~Lions reads
``All minimizing sequences are relatively compact up to a translation iff [a certain] strict subadditivity inequality holds''.
\cite[p.\,114]{Lions_1984a}
We took it as a hint that what is now
Proposition~\ref{prop:any-subseq-conv} should be valid, although we did not need it in the proof of Theorem~\ref{thm:minimizer_exists}. 

Adapting a notion of \emph{shift-compactness}\ \cite[Section~5.1.1]{HT_Concentration_functions_1980} to our situation,
the short summary can be stated: {\it under the assumptions of Theorem~\ref{thm:minimizer_exists}, every maximizing sequence of the convolution operator is shift-compact}.

\smallskip
\enote{com:convolution-lower-bound}
The result of Subsection~\ref{ssec:norm-lower-bound} 
is complementary to the results of the paper
\cite{Nursultanov-Saidahmetov_02}, in which the norms 
$\|K_k\|_{p,r}$ are estimated from below in terms of (absolute values of) the integrals of the kernel over certain families of
sets, which are different for positive kernels and general real-valued kernels.

\medskip
{\bf Subsection~\ref{ssec:necessary-cond-extremum}}

\enote{com:rev-Holder}
Another approximative version of H\"older's inequality, with explicit constants, is found in
\cite{Aldaz_08}.



\begin{thebibliography}{10}
\newcommand{\Bibitem}[1]{\bibitem{#1}}
\newcommand{\by}[1]{#1,}
\newcommand{\eds}{}
\newcommand{\paper}[1]{``#1'',}
\newcommand{\book}[1]{{\it #1},}
\newcommand{\publ}[1]{#1,}
\newcommand{\publa}[1]{#1}
\newcommand{\publaddr}[1]{#1,}
\newcommand{\inbook}[1]{in: #1}
\newcommand{\jour}[1]{{\it #1}}
\newcommand{\vol}[1]{{\bf #1},}
\newcommand{\vola}[1]{{\bf #1}}
\newcommand{\issue}[1]{(#1),}
\newcommand{\yr}[1]{(#1).}
\newcommand{\yra}[1]{#1.}
\newcommand{\pages}[1]{#1}
\newcommand{\eprintinfo}{}
\newcommand{\moreref}{}
\newcommand{\thesis}[1]{{\it #1},}
\newcommand{\thesisinfo}{}
\newcommand{\eprint}{}

\Bibitem{Aldaz_08}
\by{J.\,M.~Aldaz} 
\paper{A stability version of H\" older's inequality}
\jour{J.\ Math.\ Anal.\ Appl.} 
\vol{43} 
\pages{842--852}
\yr{2008}

	\Bibitem{Babenko1961} 
	\by{K.\,I.\,Babenko} 
	\paper{An inequality in the theory of Fourier integrals}
	\jour{Amer.\ Math.\ Soc.\ Transl.
		(II),} 
	\vol{44}
	\pages{115--128}
	\yr{1965}

	\Bibitem{Barthe_1997}
	\by{F.\,Barthe}
	\paper{Optimal Young’s inequality and its converse: a simple proof}
	\jour{GAFA, Geom.\ func.\ anal. }
	\vol{8} 
	\pages{234--242;} 
	\moreref
	\url{https://arxiv.org/pdf/math/9704210.pdf}
	\yr{1998}
	
	\Bibitem{Beckner1975b}
	\by{W.\,Beckner} 
	\paper{Inequalities in Fourier analysis} 
	\jour{Ann. of Math. } 
	\vol{102} 
	\pages{159--182}
	\yr{1975} 
		
	\Bibitem{Bogachev_2003}
	\by{V.\,I.\,Bogachev}
	\book{Measure Theory {\rm(vol.\,1,\,2)}}
	\publ{Springer}
	\yra{2007}
	
	\Bibitem{Brascamp-Lieb_1976}
	\by{H.\,J.\,Brascamp, E.\,H.\,Lieb}
	\paper{Best Constants in Young's Inequality, Its Converse, and Its Generalization to More than Three Functions}
	\jour{Adv. in Math. }  
	\vol{20} 
	\pages{151--173}
	\yr{1976} 
	
\Bibitem{Busby-Smith_1981}
\by{R.C.\,Busby, H.A.\,Smith}
\paper{Product-convolution operators and mixed-norm spaces}
\jour{Trans. AMS, }
\vola{263}
\issue{2}
\pages{309--341}
\yr{1981}

	\Bibitem{Carlen-Lieb-Loss_2004}
	\by{E.\,A.~Carlen, E.\,H.~Lieb, M.~Loss 
		\paper{A sharp analog of Young's inequality on $S^N$ and related entropy inequalities}
		\jour{J.\ Geom.\ Anal. }
		\vola{14}
		\issue{3}
		\pages{487--520}
		\yr{2004}
		
	\Bibitem{Christ_2011}
	\by{M.\,Christ}
	\eprint{\it Near-extremizers of Young's Inequality for $\RR^d$},
	\eprintinfo
	\url{https://arxiv.org/pdf/1112.4875.pdf}
	\yr{2011}
	
	\bibitem{Christ_2014}
	\by{M.\,Christ}
	\eprint{\it A sharpened Hausdorff-Young inequality,}
	\eprintinfo
	\url{https://arXiv.org/pdf/1406.1210.pdf}
	\yr{2014}
	
		\Bibitem{Culiuc_2015} 
		\by{A.~Culiuc}
		\paper{A sharpened Hausdorff-Young inequality}
		\inbook{{\it
				Summer School ``Sharp Inequalities in Harmonic Analysis'' (Kopp, 2015)},}
		\eds{R.\,Frank, D.\,Oliviera e Silva, C.\,Thiele (eds.),}
		\pages{11--15}
		\yr{2015}
		
		\Bibitem{Fournier_1977}
		\by{J.\,Fournier}
		\paper{Sharpness in Young’s inequality for convolution}
		\jour{Pacific J.\ Math. }
		\vola{72}
		\issue{3}
		\pages{383--397}
		\yr{1977}
		
	\Bibitem{Hanche-Olsen_06} 
	\by{H.~Hanche-Olsen}
	\paper{On the uniform convexity of $L^p$} 
	\jour{Proc.\ Amer.\ Math.\ Soc.} 
	\vola{134} 
	\issue{8}
	\pages{2359--2362}
	\yr{2006}

	\Bibitem{Hardy1933}
	\by{G.\,H.~Hardy}
	\paper{The constants of certain inequalities} 
	\jour{J.\ London Math.\ Soc.}
	\vol{8}
	\pages{114--119}
	\yr{1933}
	
\Bibitem{HT_Concentration_functions_1980}
\by{W.\,Hengartner, R.\,Theodorescu}
\book{Concentration functions}
\publ{Academic Press}
\publaddr{New York-London}
\yra{1973}

	\Bibitem{Hewitt-Ross1}
	\by{E.~Hewitt,  K.\,A.~Ross} 
	\book{Abstract harmonic analysis, {\rm vol.\,1}} 
	\publ{Springer-Verlag}
	\publaddr{Berlin} 
	\yra{1963}
			
	\Bibitem{Lieb-Loss_1998}
	\by{E.\,H.~Lieb, M.~Loss}
	\book{Analysis {\rm(2ed.)}}
	\publ{AMS}
	\yra{2001}
	
	\Bibitem{Lions_1984a}
	\by{P.\,L.~Lions}
	\paper{The concentration-compactness principle in the calculus of variations. The locally compact case, part 1}
	\jour{Ann.\ de l'I.H.P. Section C,\ }
	\vol{1}
	\pages{109--145}
	\yr{1984}
	
		\Bibitem{Korpusov-Sveshnikov_2011}
		\by{M.\,O.~Korpusov, A.\,G.~Sveshnikov}
		\book{Nonlinear functional analysis and mathematical modeling in physics. Methods of analysis for nonlinear operators}
		\publ{KRASAND}
		\publaddr{Moscow}
		\yra{2011}
		\moreref{(In Russian.)}

	\Bibitem{Krasnoselski_1966}
	\by{M.\,A.~Krasnoselskii, P.\,P.~Zabreiko,  E.\,I.~Pustylnik, P.\,E.~Sobolevskii}
	\book{Integral operators in spaces of summable functions} 
	\publ{Noordhoff Int.\ Publ.}
	\publaddr{Leyden} 
	\yra{1976}
	
\Bibitem{Nursultanov-Saidahmetov_02}
\by{E.\,D.~Nursultanov, K.\,S.~Saidahmetov}
\paper{On lower bound of the norm of integral convolution operator} 
\jour{Fundam.\ Prikl.\ Mat. }
\vola{8}
\issue{1}
\pages{141--150}
\yr{2002}
\moreref{(In Russian.)}

	\Bibitem{ONeil_1965}
	\by{R.~O'Neil}
	\paper{Fractional Integration in Orlicz Spaces. I}
	\jour{Trans.\ AMS,}
	\vol{115}
	\pages{300--328}
	\yr{1965}
	
		\Bibitem{Pearson_1999}
		\by{M.~Pearson}
		\paper{Extremals for a class of convolution operators}
		\jour{Houston J.\ Math. }
		\vol{25} 
		\pages{43--54}
		\yr{1999}
		
	\Bibitem{Quek-Yap_1983}
	\by{T.\,S.~Quek, L.\,Y.\,H.~Yap}
	\paper{Sharpness of Young’s inequality for convolution}
	\jour{Math.\ Scand.}
	\vol{53}
	\pages{221--237}
	\yr{1983}
	
	\Bibitem{Setterqvist2005}
	\by{E.~Setterqvist} 
	\thesis{Unitary equivalence: a new approach to the Laplace operator and the Hardy operator}
	\thesisinfo{M.\,Sc.\ thesis,} 
	\publaddr{Lule{\aa} Univ.\ of Technology,} 
	\moreref
	\eprintinfo
	\url{http://ltu.diva-portal.org/smash/get/diva2:1032398/FULLTEXT01.pdf} 
	\yr{2005}
	
	\Bibitem{SteinWeiss_1974}
	\by{E.\,M.~Stein, G.~Weiss}
	\book{Introduction to Fourier Analysis on Euclidean Spaces}
	\publ{Princeton Univ.\ Press}
	\yra{1971}

		\Bibitem{Tao_2009b}
		\by{T.\,Tao}
		\paper{Concentration compactness and the profile decompositition}
		\inbook{{\it }Poincar\'e's legacies: pages from year
			two of a mathematical blog, Part II}, }
	\publ{AMS}
	\yra{2009}
	\moreref
	\eprintinfo \url{http://terrytao.wordpress.com/2008/11/05}
		
	\Bibitem{Tintarev-Fieseler_2007}
	\by{K.\,Tintarev, K.-H.\,Fieseler}
	\book{Concentration compactness. Functional-analytic grounds and applications}
	\publaddr{London}
	\publ{Imperial College Press}
	\yra{2007}
	
	\Bibitem{Titchmarsch1946} 
	\by{E.\,T.~Titchmarsch} 
	\book{Introduction to the Theory of Fourier Integrals}
	\publ{Oxford Univ.\ Press} 
	\yra{1948}
		
		\Bibitem{Vitturi_2014}
		\by{M.\,Vitturi}
		\eprint{Fine structure of some classical affine-invariant
			inequalities, additive combinatorics and near
			extremizers (account of a talk by Michael Christ); } 
		\moreref
		\eprintinfo \url{http://www.maths.ed.ac.uk/~s1251909/christ_talk.pdf}
		\yr{2014}
		

\end{thebibliography}
\end{document}